%
%
%
%
%


%
\RequirePackage{fix-cm}
\documentclass[smallextended,twocolumn]{svjour3}          

\smartqed  
\usepackage{graphicx}
\usepackage[disable]{todonotes} 
\usepackage{amssymb}
\usepackage{amsmath}


\usepackage{amsthm}

\usepackage{color}
\usepackage{booktabs}
\DeclareMathAlphabet\gothic{U}{euf}{m}{n}

\newcommand{\R}{\mathbb{R}}
\newcommand{\bbS}{\mathbb{S}}
\newcommand{\bbM}{\mathbb{M}}
\newcommand{\bx}{\mathbf{x}}
\newcommand{\bq}{\mathbf{q}}
\newcommand{\bv}{\mathbf{v}}
\newcommand{\bw}{\mathbf{w}}

\newcommand{\bp}{\mathbf{p}}

\newcommand{\ul}{\mathbf}
\newcommand{\bgamma}{\boldsymbol{\gamma}}

\newcommand{\desda}{\Leftrightarrow}
\newcommand{\ba}{\mathbf{a}}
\newcommand{\gF}{\gothic{F}}
\newcommand{\cC}{\mathcal{C}}
\newcommand{\rmd}{\mathrm{d}}
\newcommand{\mG}{\mathcal{G}}
\newcommand{\cK}{{\mathcal K}}
\newcommand{\bfR}{{\mathbf{R}}}

\DeclareMathOperator\Lip{Lip}

\DeclareMathOperator\Length{Length}

\DeclareMathOperator\Hull{Hull}
\DeclareMathOperator\Id{Id}
\def\bM{{\mathbb M}}
\def\bR{{\mathbb R}}
\def\bS{{\mathbb S}}

\def\bX{{\mathbb X}}
\def\bZ{{\mathbb Z}}
\def\bE{{\mathbb E}}
\def\bN{{\mathbb N}}

\def\cC{{\mathcal C}}
\def\cF{{\mathcal F}}
\def\cG{{\mathcal G}}

\def\cB{{\mathcal B}}

\def\cI{{\mathcal I}}
\def\cH{{\mathcal H}}
\def\cO{{\mathcal O}}

\def\by{{\mathbf y}}
\def\bn{{\mathbf n}}

\def\gL{{\mathfrak L}}
\def\gH{{\mathfrak H}}
\def\gB{{\mathfrak B}}
\def\gC{{\mathfrak C}}

\def\vp{\varphi}
\def\ve{\varepsilon}
\def\sm{\setminus}
\def\<{\langle}
\def\>{\rangle}
\def\trans{{\mathrm T}}

\def\diff{{\rm d}}

\makeatletter
\newcommand{\osetH}[3]{%
  {\mathop{#3}\limits^{\vbox to #1\ex@{\kern-2\ex@
   \hbox{\scriptsize #2}\vss}}}}
\makeatother

\renewcommand\cot[1]{\hat #1}

\def\pTarget{\bp_{\rm T}}
\def\pSource{\bp_{\rm S}}

 

\begin{document}\sloppy

\title{Optimal Paths for Variants of the 2D and 3D Reeds-Shepp Car with Applications in Image Analysis
\thanks{The research leading to the results of this article has received funding from the European Research Council under the European Community’s 7th Framework Programme \
(FP7/20072014)/ERC grant agreement No. 335555 (Lie Analysis). This work was partly funded by ANR grant NS-LBR. ANR-13-JS01-0003-01.}} 


\author{R. Duits $^*$        \and
        S.P.L. Meesters $^*$\and
        J-M. Mirebeau $^*$\and
        J.M. Portegies $^*$ 
}

\authorrunning{R. Duits, S.P.L. Meesters, J-M. Mirebeau, J.M. Portegies} 

\institute{$^*$ Joint main authors           \and \\
              R. Duits, S.P.L. Meesters, J.M. Portegies\at
              CASA, Eindhoven University of Technology, The Netherlands \\
              Tel.: +31-40-2472859 \\
              \email{$\{$r.duits, s.p.l.meesters, j.m.portegies$\}$@tue.nl}           
           \and
           J-M. Mirebeau \at
           University Paris-Sud, CNRS, University Paris-Saclay, 91405 Orsay, France\\
           \email{jean-marie.mirebeau@math.u-psud.fr}
}



\maketitle
\begin{abstract}
We present a PDE-based approach for finding optimal \todo{R5.1: terminology} paths for the Reeds-Shepp car. In our model we minimize a (data-driven) functional involving both curvature and length penalization, with several generalizations. Our approach encompasses the two and three dimensional variants of this model, state dependent costs, and moreover, the possibility of removing the reverse gear of the vehicle. We prove both global and local controllability results of the models.


Via eikonal equations on the manifold $\bR^d \times \mathbb{S}^{d-1}$ we compute distance maps w.r.t. highly anisotropic Finsler metrics, which approximate the singular (quasi)-distances \todo{R1.3: terminology} underlying the model. This is achieved using a Fast-Marching (FM) method, building on Mirebeau \cite{mirebeau_anisotropic_2014,mirebeau_efficient_2013}. The FM method is based on specific discretization stencils which are adapted to the preferred directions of the Finsler metric and obey a generalized acuteness property. The shortest paths can be found with a gradient descent method on the distance map, which we formalize in a theorem. We justify the use of our approximating metrics by proving convergence results.

Our curve optimization model in $\bR^{d} \times \mathbb{S}^{d-1}$ with data-driven cost allows to extract complex tubular structures from medical images, e.g. crossings, and incomplete data due to occlusions or low contrast. Our work extends the results of  Sanguinetti et al. \cite{sanguinetti_sub-riemannian_2015} on numerical sub-Riemannian eikonal equations and the Reeds-Shepp Car to 3D, with comparisons to exact solutions by Duits et al. \cite{duits_sub-riemannian_2016}.

Numerical experiments show the high potential of our method in two applications: vessel tracking in retinal images for the case $d=2$, and brain connectivity measures from diffusion weighted MRI-data for the case $d=3$, extending the work of Bekkers et al \cite{bekkers_pde_2015}. We demonstrate how the new model without reverse gear better handles bifurcations.

\end{abstract}
\keywords{Finsler geometry \and sub-Riemannian geometry \and fast-marching \and tracking \and bifurcations}

\listoftodos[Changes to document]
\section{Introduction}
\label{sec:intro}

Shortest paths in position and orientation space are central in this paper. Dubins describes in \cite{dubins_curves_1957} the problem of finding shortest paths for a car in the plane between initial and final points and direction, with a penalization on the radius of curvature, for a car that has no reverse gear. Reeds and Shepp consider in \cite{reeds_optimal_1990} the same problem, but then for a car that does have the possibility for backward motion. In both papers, the focus lies on describing and proving the general shape of the optimal paths, without giving explicit solutions for the shortest paths.

This can be considered a curve optimization problem in the space $\R^2 \times (\bR/2 \pi\bZ)$, equipped with the natural Euclidean metric but only among curves $\gamma(t) = (x(t),y(t),\theta(t))$ subject to the constraint that $(\dot x(t), \dot y(t))$ is proportional to $(\cos \theta(t), \sin \theta(t))$. Formulating the problem this way, it becomes one of the simplest examples of sub-Riemannian (SR) geometry: the tangent vector $\dot \gamma(t)$ is constrained to remain in the span of $(\cos \theta(t),\sin \theta(t),0)$ and $(0,0,1)$, see Fig. \ref{fig:intro_lifting}\todo{R1.1}. The SR curve optimization problem and the properties of its geodesics in $\R^2 \times \bS^1$ have been studied and applied in image analysis by \cite{petitot_neurogeometry_2003,citti_cortical_2006,duits_association_2013,boscain_curve_2014,mashtakov_parallel_2013,agrachev_control_2004}, and in particular for modelling the Reeds-Shepp car in \cite{moiseev_maxwell_2010,boscain_existence_2010,sachkov_cut_2011}, whereas the latter presented a complete and optimal synthesis for the geometric control problem on $\R^{2}\times \bS^{1}$ with uniform cost. Properties of SR geodesics in $\R^d \times \bS^{d-1}$ with $d=3$ have been studied in \cite{duits_sub-riemannian_2016} and for general $d$ in \cite{duits_cuspless_2014}.  Apart from the Reeds-Shepp car problem, there are other examples relating optimal control theory and SR geometry, see for example the books by Agrachev and Sachkov \cite{agrachev_control_2004} and Montgomery \cite{montgomery_tour_2002}. Applications in robotics and visual modeling of SR geometry and control theory can be found in e.g. \cite{stefani_applications_2014}. \todo{R1.2: literature}.

\begin{figure}[t]\label{fig:intro_lifting}
\centerline{
\includegraphics[width= \hsize]{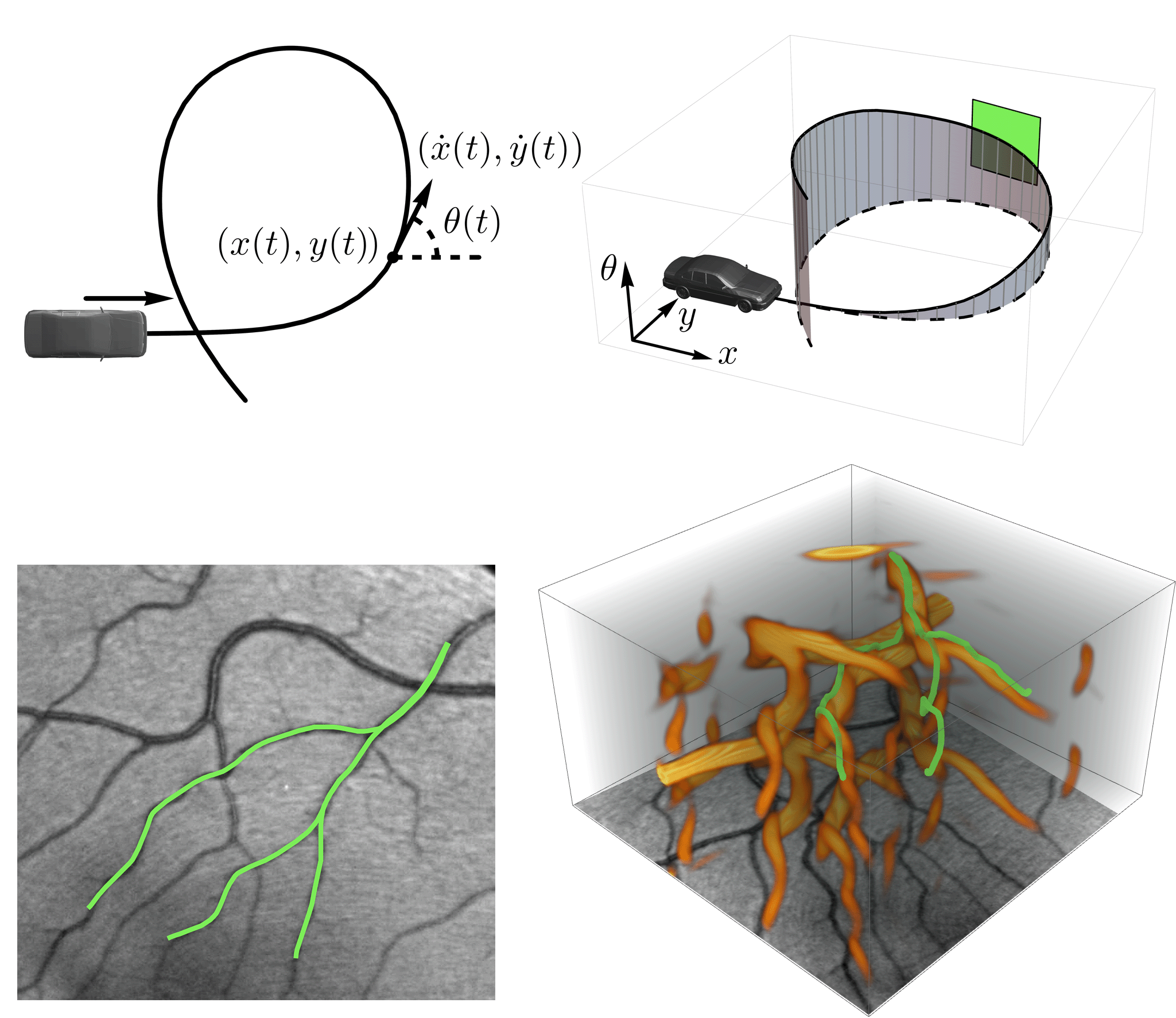}}

\caption{Top: A car can only move in its current orientation or change its current orientation. In other words, when the path $\gamma(t) = (x(t),y(t),\theta(t))$ is considered as indicated in the left figure, the tangent $\dot{\gamma}(t)$ is restricted to the span of $(\cos \theta (t), \sin \theta(t),0)$ and $(0,0,1)$, of which the green plane on the right is an example. Bottom: the meaning of shortest path between points in an image is determined by a combination of a cost computed from the data, the restriction above, and a curvature penalization. The path optimization problem is formulated on the position-orientation domain such as in the image on the right. The cost for moving through the orange parts is lower than elsewhere.}
\end{figure}

\todo[size=\tiny]{R5.2: Adapted Fig. 1} On the left in Fig. \ref{fig:freevspositive_introfig_full}, we show an example of an optimal path between two points in $\R^2 \times \bS^1$. The projection on $\R^2$ of this curve has two parts where the car moves in reverse (the red parts of the line), resulting in two cusps. From the perspective of image analysis applications this is undesirable and it is a valid question what the optimal paths are if cusps and reverse gear are not allowed. In this paper, similar to the difference between the Dubins car and the Reeds-Shepp car, we also consider this variant: it can be accounted for by requiring that the spatial propagation is forward. This variant falls outside the SR framework and requires asymmetric Finsler geometry instead.

Furthermore, we would like to extend the Finsler metric using two data-driven factors that can vary with position and orientation. This can be used to compute shortest paths for a car, where for example road conditions and obstacles are taken into account. In \cite{bekkers_pde_2015} it is shown this approach is useful for tracking vessels in retinal images. Likewise, the 3D variant of the problem provides a basis for algorithms for blood vessel detection in 3D Magnetic Resonance Angiography (MRA) data, or detection of shortest paths and quantification of structural connectivity in 5D diffusion weighted Magnetic Resonance Imaging (MRI) data of the brain.

\begin{figure*}[t]
\centerline{
\includegraphics[width= \hsize]{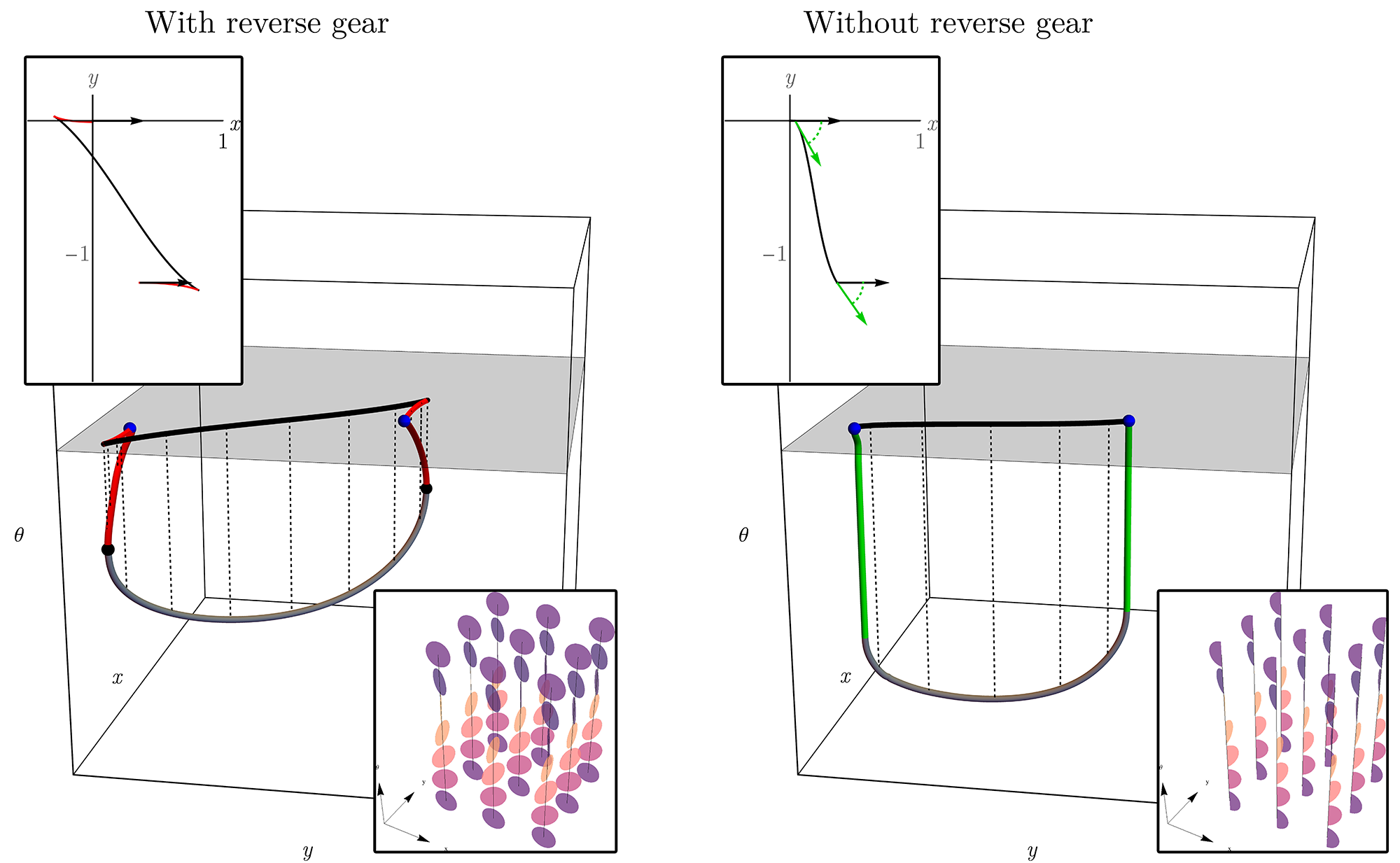}}
\caption{Top: Example of a shortest path with (left) and without (right) reverse gear in $\R^2 \times S$ and its projection on $\R^2$. 
The black arrows indicate the begin and end condition in the plane, corresponding to the blue dots in $\R^2 \times S$. The paths in the lifted space are smooth, but vertical tangents appear in both cases. 
In the left figure, the projection of the path has two cusps, and the first and last part of the path is traversed backwards (the red parts). On the right, backward motion is not possible. Instead, according to our model, the shortest path is a concatenation of an in-place rotation (green), a SR geodesic, and again an in-place rotation.
Bottom: corresponding control sets as defined in (\ref{controlset}) for the allowed velocities at each position and orientation, with $B_{\cF_0}$ on the left and $B_{\cF_0^+}$ on the right.
}\label{fig:freevspositive_introfig_full}
\end{figure*}

\subsection{A distance function and the corresponding shortest paths on $\bR^d \times \bS^{d-1}$}\label{sec:introdistance}
We fix the dimension $d\in \{2,3\}$ 
, and let $\bM := \bR^d\times \bS^{d-1}$ be the $2d-1$ dimensional manifold of positions and orientations. We use a Finsler metric on the tangent bundle of $\bbM$, $\cF : T(\bM) \to [0,+\infty]$ , of which specific properties are discussed later, to define a geometry on $\bM$. Any such Finsler metric $\cF$ induces a measure of length $\Length_\cF$ on the class of paths with Lipschitz regularity, defined as\footnote{In contrast to previous works \cite{duits_sub-riemannian_2016,boscain_curve_2014,bekkers_pde_2015,mashtakov_tracking_2017,duits_association_2013} we parameterize such that the time integration stays on $[0,1]$, and $t>0$ is \emph{not} a priori reserved (unless explicitly stated otherwise) for arc length parametrization (which satisfies $\mathcal{F}_{\bgamma(t)}(\dot{\bgamma}(t))=1$).}
\begin{equation*}
	\Length_\cF(\gamma) := \int_0^1 \cF(\gamma(t), \dot \gamma(t)) \, {\rm d}t,
\end{equation*}
with the convention $\dot \gamma(t) := \frac d {dt} \gamma(t)$. The path is said to be \emph{normalized} w.r.t.\ $\cF$ iff $\cF(\gamma(t), \dot \gamma(t)) = \Length_\cF(\gamma)$ for all $t \in [0,1]$. Any Lipschitz continuous path of finite length can be normalized by a suitable reparametrization.
Finally, the quasi-distance \todo{R1.3: terminology} $d_\cF : \bM \times \bM \to [0,+\infty]$ is defined for all $\bp,\bq\in \bM$ by
\begin{equation}
\label{eqdef:dF}
\begin{split}
	d_\cF(\bp, \bq) := \inf \{ &\Length_\cF(\gamma) \; |\; \gamma \in \Gamma,\, \gamma(0)=\bp, \\ &\hspace*{9.2em}\gamma(1)=\bq\}, 
\end{split}
\end{equation}
with $\Gamma:=\Lip([0,1], \bM)$.
Normalized minimizers of \eqref{eqdef:dF} are called minimizing \todo{R5.1: terminology} geodesics from $\bp$ to $\bq$ w.r.t.\ $\cF$. For certain pairs $(\bp,\bq)$ these minimizers may not be unique, and these points are often of interest, see for example \cite{moiseev_maxwell_2010,bekkers_vessel_2017}

\begin{definition}[Maxwell point] \label{def:Maxwellpoint}
Let $\bp_S \in \bbM$ be a fixed point source and $\gamma \in \Gamma$ a geodesic connecting $\bp_S$ with $\bq \in \bbM$, $\bq \neq \bp_S$. Then $\bq$ is a Maxwell point if there exists another extremal path $\tilde{\gamma} \in \Gamma$ connecting $\bp_S$ and $\bq$, with $\Length_\cF(\gamma) = \Length_\cF(\tilde{\gamma})$. If $\bq$ is the first point (distinct from $\bp_S$) on $\gamma$ where such $\tilde{\gamma}$ exists, then $\bq$ is called the first Maxwell point. The curves $\gamma, \tilde{\gamma}$ lose global optimality after the first Maxwell point.
\end{definition}

\begin{remark}[Terminology]
We use the common terminology of `Finsler metric' for $\cF$, although it is also called  `Finsler function', `Finsler norm' or `Finsler structure', and despite the fact that $\cF$ is not a metric (distance) in the classical sense. The Finsler 
metric $\cF$ induces the quasi-distance $d_{\cF}$ as defined in \eqref{eqdef:dF}. If  $\cF(\bp,\dot{\bp}) = \cF(\bp,-\dot{\bp})$ for all $\bp \in \bbM$ and tangent vectors $\dot{\bp} \in T_{\bp}(\bbM)$, then $d_{\cF}$ is a true metric, satisfying $d_{\cF}(\bp,\bq) = d_{\cF}(\bq,\bp)$ for all $\bp,\bq \in \bbM$. However, to avoid confusion of the word metric, we will only refer to $d_{\cF}$ as a distance or quasi-distance. If the `Finsler metric' $\mathcal{F}$ is induced by a metric tensor field $\mathcal{G}$ on  Riemannian manifold $(\mathbb{M},\mathcal{G})$
then one has $\mathcal{F}(\ul{p},\dot{\ul{p}})=\sqrt{\left.\mathcal{G}\right|_{\ul{p}}(\dot{\ul{p}},\dot{\ul{p}})}$.

Throughout the document, we use the words path and curve synonymously. When we consider the formal curve optimization problem \eqref{eqdef:dF}, we speak of geodesics for the stationary curves. Such stationary curves are locally minimizing. A global minimizer of \eqref{eqdef:dF} is referred to as minimizing geodesic or minimizer.
\end{remark}

\subsection{Geometry of the Reeds-Shepp model}\label{subsec:introgeometry}
We introduce the Finsler metric $\cF_0$ underlying the Reeds-Shepp car model, and the Finsler metric $\cF_0^+$ 
corresponding to the variant without reverse gear. Let $(\bp, \dot \bp) \in T(\bM)$ be a pair consisting of a point $\bp \in \bbM$ and a tangent vector $\dot \bp \in T_\bp(\bbM)$ at this point. The physical and angular components of a point $\bp\in \bM$ are denoted by $\bx\in \bR^d$ and $\bn\in \bS^{d-1}$, and this convention carries over to the tangent:
\begin{align*}
\hspace{3.5em}
	\bp &= (\bx,\bn), &
	\dot \bp &= (\dot \bx, \dot \bn) \in T_\bp( \bM).
\end{align*}
We say that $\dot \bx$ is proportional to $\bn$, that we write as $\dot \bx \propto \bn$, iff there exists a $\lambda \in \bR$ such that $\dot \bx = \lambda \bn$. Define
\begin{align}
\label{eqdef:ReedSheppMetric}
	\cF_0(\bp, \dot \bp)^2 &:=
	\begin{cases}
	 \cC^{2}_1(\bp) |\dot \bx \cdot \bn|^2 +\cC_{2}^2(\bp) \|\dot \bn\|^2  & \text{if }\dot \bx \propto \bn, \\
	+\infty & \text{otherwise.}	
	\end{cases}
\\
\label{eqdef:ReedSheppForwardMetric}
	\cF^+_0(\bp, \dot \bp)^2 &:=
	\begin{cases}
	 \cC^{2}_1(\bp) |\dot \bx \cdot \bn|^2 + \cC_{2}^2(\bp) \|\dot \bn\|^2 & \parbox[t]{.6\textwidth}{ if $\dot \bx \propto \bn$ and \newline $\dot \bx \cdot \bn \geq 0$,} \\
	+\infty & \text{otherwise.}	
	\end{cases}
\end{align}
Here $\| \cdot \|$ denotes the norm and ``$\cdot$'' the usual inner product on the Euclidean space $\R^d$. The functions $\cC_{1}$ and $\cC_{2}$ are assumed to be continuous on $\bM$, and uniformly bounded from below by a positive constant $\delta>0$. In applications, $\cC_{1}$ and $\cC_{2}$ are chosen so as to favor paths which remain close to regions of interest, e.g.\ along blood vessels in retinal images, see Fig. \ref{fig:intro_lifting}. Note that their physical units are distinct: if one wishes $d_\cF$ to have the dimension $[T]$ of a travel time, then $\cC_1^{-1}$ is a physical, (strictly) spatial velocity $[\mathrm{Length}][T]^{-1}$, and $ \cC_2^{-1}$ is an angular velocity $[\mathrm{Rad}][T]^{-1}$. For simplicity one often sets $\cC_{1} = \xi \cC_{2}$, where $\xi^{-1}>0$ is a unit of spatial length. The special case $\cC_1(\bp) = \xi \cC_2(\bp) = \xi$ for all $\bp \in \bM$ is referred to as the uniform cost case.

\subsection{The eikonal equation and the fast marching algorithm}\label{sec:introeikonal}
We compute the distance map to a point source on a volume using the relation to eikonal equations. Let $\pSource\in \bM$ be an arbitrary source point, and let $U$ be the associated distance function
\begin{equation}\label{eqdef:U}
	U(\bp) := d_\cF(\pSource,\bp).
\end{equation}
Then $U$ is the unique viscosity solution \cite{crandall_viscosity_1983,crandall_users_1992} to the eikonal PDE:
\begin{equation}
\label{eqdef:EikonalPDE}
\left\{\begin{aligned}
	&\cF^*(\bp,\diff U(\bp)) = 1 \qquad
	\text{ for all } \bp \in \bM\sm \{\pSource\}, \\
	&U(\pSource) = 0.
	\end{aligned} \right.
\end{equation}
Here $\cF^*$ is the dual metric of $\cF$ and $\rmd U$ is the differential of the distance map $U$. However, for these relations to hold, and for numerical discretization to be practical, $\cF$ should be at least continuous\footnote{From a theoretical standpoint, one may rely on the notion of discontinuous viscosity solution \cite{Bardi:2008wn}. But this concept is outside of the scope of this paper, and in addition it forbids the use of a singleton $\{\bp_S\}$ as the target set.}. We therefore propose in Section \ref{subsec:ApproximateReedShepp} for both $\cF_0$ and $\cF_0^+$ an approximating metric, that we denote by $\cF_\ve$ and $\cF_\ve^+$, respectively, that are continuous and converge to $\cF_0$ and $\cF_0^+$ as $\ve \rightarrow 0$. The approximating metrics correspond to a highly anisotropic Riemannian and Finslerian metric, rather than a sub-Riemannian or sub-Finslerian metric. The metric $\cF_{\epsilon}$ is in line with previous approximations \cite{citti_cortical_2006,bekkers_pde_2015,sanguinetti_sub-riemannian_2015} for the case $d=2$.


We design a monotone and causal discretization scheme for the static Hamilton-Jacobi PDE \eqref{eqdef:EikonalPDE}, which allows to apply an \todo{E.4: typo} efficient, single pass Fast-Marching Algorithm \cite{tsitsiklis_efficient_1995}.
Let us emphasize that designing a causal discretization scheme for \eqref{eqdef:EikonalPDE} is non-trivial, because its local connectivity needs to \todo{R5.3: typo} obey an \emph{acuteness property}
\cite{sethian_ordered_2001,vladimirsky_static_2006}
depending on the geometry defined by $\cF$. We provide constructions for the metrics $\cF_\ve$ or $\cF_\ve^+$ of interest, based on the earlier works \cite{mirebeau_anisotropic_2014,mirebeau_efficient_2013}.

\subsection{Shortest Paths and Minimal Distances in Medical Images}

\begin{figure*}[t]
\centering
\includegraphics[width=0.9 \hsize]{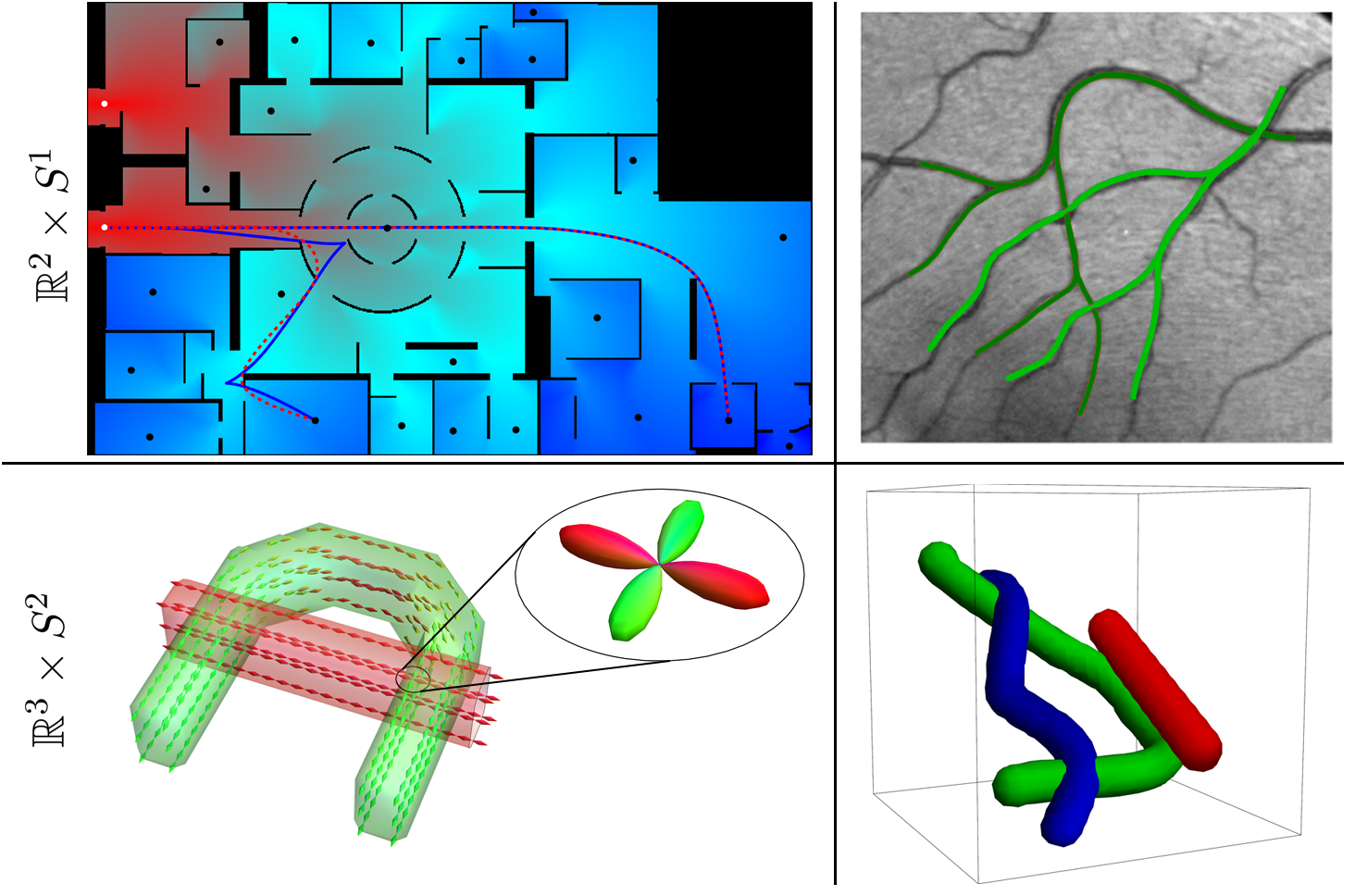}
\caption{Challenges and applications. Top row: the case $d = 2$, with a toy problem for finding the shortest way with or without reverse gear (blue and red, respectively) to the exit in Centre Pompidou (top left) and a vessel tracking problem in a retinal image. Bottom row: the case $d = 3$, connectivity in (simulated) dMRI data. Left: visualization of a dataset with two crossing bundles without torsion, with a glyph visualization of the data in $\R^3 \times \bS^2$ and a magnification of one such glyph, indicating two main fiber directions. Right: the spatial configuration in $\R^{3}$ of bundles with torsion in an artificial dataset on $\R^3 \times \bS^2$.}\label{fig:introfig_challenges}
\end{figure*}

The application of the Hamilton-Jacobi framework for finding shortest paths has been shown to be useful for vessel-tracking in retinal images \cite{bekkers_pde_2015}, see Fig. \ref{fig:introfig_challenges} (top, right) \todo{E.1: adapted fig. 2}. The computational advantage of the fast-marching solver over the numerical method in \cite{bekkers_pde_2015} in this setting was demonstrated by Sanguinetti et al. \cite{sanguinetti_sub-riemannian_2015}\todo{R1.2:literature}. A related approach using fast marching with elastica functionals can be found in \cite{chen_vessel_2016,chen_global_2016}. The sub-Riemannian approach by Bekkers et al. \cite{bekkers_pde_2015} concerns the two-dimensional Reeds-Shepp car model with reverse gear, where 2D gray-scale images are first lifted to an orientation score defined on the higher dimensional manifold $\bR^2 \times \bS^1$. There, the combination of the sub-Riemannian metric, the cost function derived from the orientation score, and the numerical fast-marching solver, provided a solid approach to accurately track vessels in challenging sets of images.

In the previous works \cite{bekkers_pde_2015} and \cite{bekkers_vessel_2017} the clear advantage
of sub-Riemannian geometrical models over isotropic Riemannian models on $\R^{2}\times \bS^{1}$ has been shown
with many experiments\footnote{For vessel tracking experiments that show the benefit of the \emph{sub-}Riemannian approach $(\R^{2}\times \bS^{1},d_{\mathcal{F}_0})$ in \cite{bekkers_pde_2015} see:\ {\scriptsize
http://epubs.siam.org/doi/suppl/10.1137/15M1018460}.
}.

In this work we will show similar benefits for our sub-Riemannian tracking in $\R^{3}\times \bS^{2}$.
In general, regardless the choice of image dimension $d \in \{2,3\}$, one has that our extension of the Hamilton-Jacobi framework from the conventional
base manifold of position space only (i.e. $\R^d$)
to the base manifold of positions and orientations (i.e. $\R^d \times \bS^{d\!-\!1}$),
generically deals with the `leakage problem' where wavefronts leak at crossings in the conventional eikonal frameworks acting directly in the image domain.
See Fig.~\!\ref{fig:leakage} where our solution to the `leakage problem' is illustrated for $d=2$.
\begin{figure*}
\centering
\includegraphics[width=0.75\hsize]{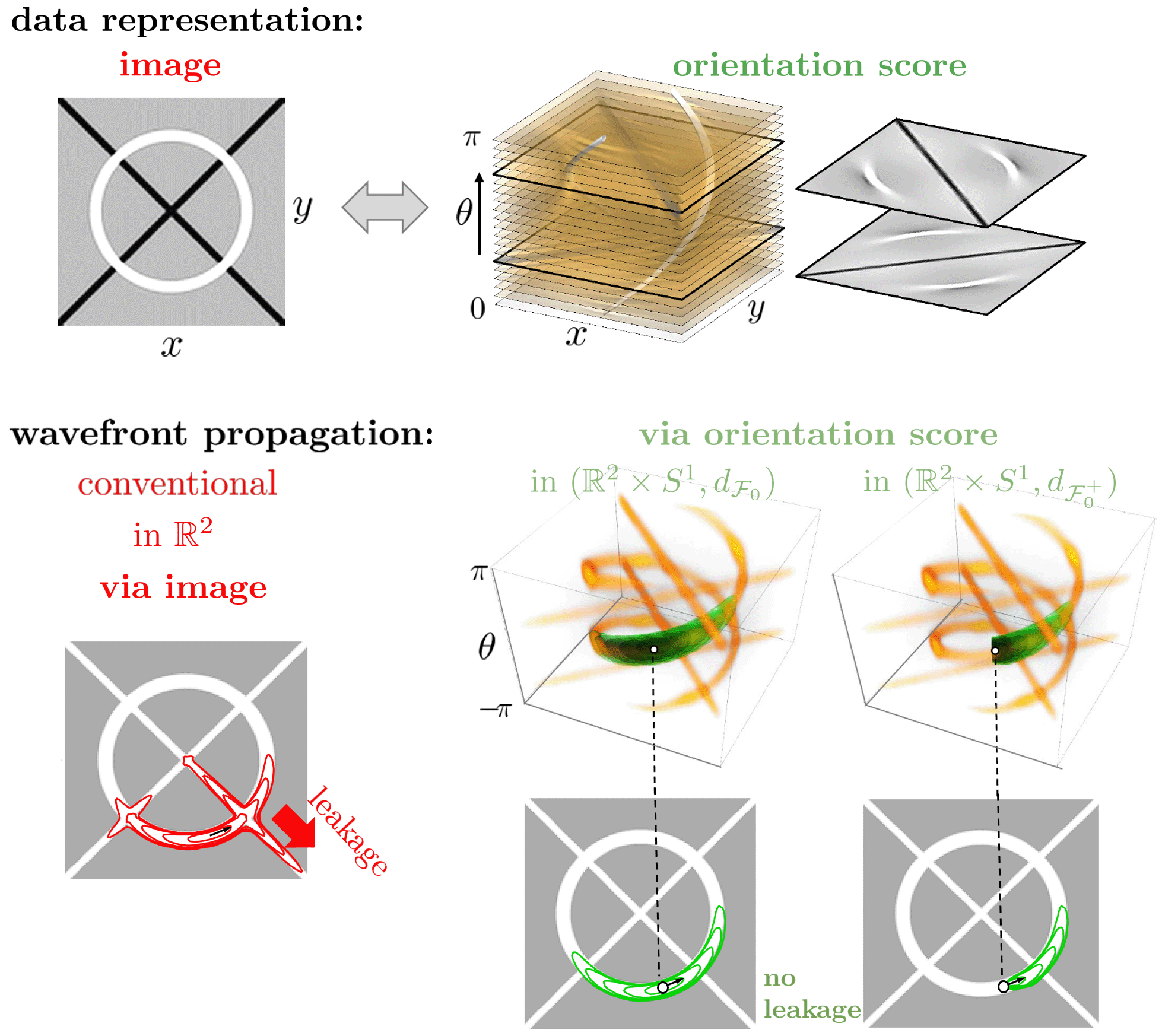}
\caption{Top: An orientation score \cite{duits_image_2006,MichielJMIV2} 
provides a complete overview of how the image is decomposed out of local orientations. It is a method that enlarges the image domain from $\R^{d}$ to $\R^{d} \times \bS^{d\!-\!1}$ (here $d=2$).
Bottom: Conventional geodesic wavefront propagation in images (in red) typically leaks at crossings, whereas wavefront propagation
in orientation scores (in green) does not suffer from this complication. A minimum intensity projection over orientation gives optimal fronts in the image.
The cost for moving through the orange
parts is lower than elsewhere, and is computed from the orientation score, see e.g.~\cite{bekkers_pde_2015}.
The `leakage problem' is gone both for propagating symmetric sub-Riemannian spheres (left), and it is also gone for propagation of asymmetric Finsler spheres (right). \label{fig:leakage}
}
\end{figure*}

Regarding image analysis applications, we propose to use the same strategy of sub-Riemannian and Finslerian
tracking above the extended base manifold \mbox{$\R^{3} \times \mathbb{S}^{2}$} of positions and orientations for fiber tracking and structural connectivity in brain white matter in
diffusion-weighted MRI data. 

For diffusion-weighted MRI images, a signal related to the amount of diffusion of water molecules is measured, which in the case of neuroimages is considered to reflect the structural connectivity in brain white matter.
The images can in a natural way be considered to have domain $\Omega \subset \bR^3 \times \bS^2$. Fig.~\ref{fig:introfig_challenges} (bottom) illustrates such images. On the left we use a glyph visualization, that shows a surface for each grid point, where the distance from the surface to the corresponding grid point $\ul{x}$ is proportional to the data-value $U(\ul{x},\ul{n})$ and the coloring is related to the orientation $\ul{n} \in \bS^{2}$. As such the dMRI data already provide a distribution on $\R^{3}\times \bS^2$ and does not require an `orientation score' as depicted in Fig.~\!\ref{fig:intro_lifting} and Fig.~\!\ref{fig:leakage}.

A large number of tractography methods exist, that are designed to estimate/approximate the fiber paths in the brain based on dMRI data. Most of these methods construct tracks that locally follow the structure of the data, see e.g. \cite{tournier_mrtrix:_2012,descoteaux_deterministic_2009} or references in \cite{jbabdi_tractography:_2011}. More related to our approach are geodesic methods, that have the advantage that they minimize a functional, and thereby are less sensitive to noise and provide a certain measure of connectivity between regions. These methods can be based on diffusion tensors in combination with Riemannian geometry on position space, e.g. \cite{Fletcher,lenglet_brain_2009,jbabdi_accurate_2008}. One can also make use of the more general Finsler geodesic tracking to include directionality \cite{melonakos_finsler_2007,melonakos_finsler_2008}, and use high angular resolution data (HARDI), examples of which can be found in \cite{SepasianPhD,Laura}. Recently, a promising method has been proposed, based on geodesics on the full position and orientation space using a data-adaptive Riemannian metric \cite{pechaud_brain_2009}. We also work on this joint space of positions and orientations, but use either Riemannian or asymmetric Finsler metrics that are highly anisotropic, that we solve by a numerical fast marching method that is able to deal with this high anisotropy. We show on artificial datasets how our method can be employed to give shortest paths \todo{R5.1: terminology} between two regions w.r.t the imposed Finsler metric, and that these paths correctly follow the bundle structure.

\subsection{Contributions and Outline}
The extension to 3D of the Reeds-Shepp car model and the adaptation to model shortest paths for cars that cannot move backwards are new and provide an interesting collection of new theoretical and practical results:

\begin{itemize}
\item In Theorem \ref{th:Controllability} we show that the Reeds-Shepp model is globally and locally controllable, and that the Reeds-Shepp model without reverse gear is globally but not locally controllable. Hence the distance map loses continuity.

\item We introduce regularizations $\cF_\ve$ and $\cF_\ve^+$ of the Finsler 
metrics $\cF_0$ and $\cF_0^+$, which make our numerical discretization possible. We show that both the corresponding distances converge to $d_{\cF_0}$ and $d_{\cF_0^+}$ as $\ve \rightarrow 0$ and the minimizing curves converge to the ones for $\ve = 0$, see Theorem \ref{th:ReedSheppCV}.

\item We present and prove for $d =2$ and uniform cost a theorem that describes the occurrence of cusps for the sub-Riemannian model using $\cF_0$, and that using $\cF_0^+$ leads to geodesics that are a concatenation of purely angular motion, a sub-Riemannian geodesic without cusps and again a purely angular motion. We call the positions where in-place rotation (or purely angular motion) takes place \emph{keypoints}. For uniform cost, we show that the only possible keypoints are the begin and end point, and for many end conditions we can describe how this happens. The precise theoretical statement and proof are found in Theorem \ref{th:CuspsAndRotations}.

\item Furthermore, we show in Theorem \ref{th:Backtracing} how the geodesics can be obtained from the distance map, for a general Finsler metric, and in the more specific cases that we use in this paper. For our cases of interest, we show that backtracking of geodesics is either done via a single intrinsic gradient descent (for the models with reverse gear), or via two intrinsic gradient descents (for the model without reverse gear).

\item For our numerical experiments we make use of a Fast-Marching implementation, for $d = 2$ introduced in \cite{mirebeau_anisotropic_2014}. In Section 6 we give a summary of the numerical approach for $d = 3$, but a detailed discussion of the implementation and an evaluation of the accuracy of the method is beyond the scope of this paper, and will follow in future work. For $d = 2$, we show an extensive comparison between the models with and without reverse gear for uniform cost, to illustrate the useful principle of the keypoints, and to show the qualitative difference between the two models. In examples with non-uniform cost, see for example the top row of Fig. \ref{fig:introfig_challenges}, we show that the model places the keypoints optimally at corners/bifurcations in the data, where the in-place rotation forms a natural, \emph{automatic} `re-initialization' of the tracking.

For $d = 3$, we give several examples to show the influence of the model parameters, in particular the cost parameter. The examples indicate that the method adequately deals with crossing or kissing structures.

\end{itemize}

\paragraph{Outline} In Section \ref{sec:results}, we give a detailed overview of the theoretical results of the paper. The theorems \ref{th:Controllability}, \ref{th:CuspsAndRotations} and \ref{th:Backtracing} are discussed and proven in Sections \ref{ch:3}, \ref{ch:proofcuspskeypoints} and \ref{ch:proofbacktracking}, respectively. The reader who is primarily interested in the application of the methods may choose to skip these three sections. The proof of Theorem \ref{th:ReedSheppCV} is given in Appendix \ref{app:WellPosedness}. We discuss the numerics briefly in Section \ref{sec:Implementation}. Section \ref{sec:Applications} contains all experimental results. Conclusion and discussion follow in Section \ref{sec:Conclusion}. For an overview of notations, Appendix \ref{app:TableOfNotations} may be helpful.

\section{Main results}
\label{sec:results}

In this section, we state formally the mathematical results announced in Section~\ref{sec:intro}. Some preliminaries regarding the distance function are introduced in the Section below. Results regarding the exact Reeds-Shepp car models are gathered in Section \ref{subsec:Geometry}. The description of the approximate models and the related convergence results appear in Section \ref{subsec:ApproximateReedShepp}.
Analysis of special interest points (cusps and keypoints) are done in Section \ref{subsec:interestpoints}.
Results on the eikonal equation, and subsequent backtracking of minimizing geodesics via intrinsic gradients is presented in Section \ref{subsec:eikonal}.

\subsection{Preliminaries on the (Quasi-)Distance Function and Underlying Geometry}\label{sec:preliminaries}

Geometries on the manifold of states $\bM = \bR^d \times \bS^{d-1}$ are defined by means of Finsler metrics %
which are functions $\cF : T(\bM) \to [0,+\infty]$. On each tangent space, the metric should be $1$-homogeneous, convex and quantitatively non-degenerate
 with a uniform constant $\delta >0$: for all $\bp=(\ul{x},\ul{n})\in \bM$, $\dot \bp, \dot \bp_0,\dot \bp_1\in T_\bp(\bM)$, and $\lambda\geq 0$:
\begin{align}
\centering
\label{eqdef:Metric}
	\cF(\bp, \lambda \dot \bp) &= \lambda \cF(\bp, \dot \bp), \nonumber \\
	\cF(\bp, \dot \bp_0+\dot \bp_1) &\leq \cF(\bp, \dot \bp_0) + \cF(\bp, \dot \bp_1),  \nonumber \\
	\cF(\bp,\dot \bp) &\geq \delta\sqrt{\|\dot \bx\|^2 + \|\dot \bn\|^2}. 
\end{align}
A weak regularity property is required as well, see the next remark.
The induced distance $d_\cF$, defined in \eqref{eqdef:dF}, obeys $d_\cF(\bp, \bq) = 0$ iff $\bp = \bq$, and obeys the triangle inequality. However, unlike a regular distance, $d_\cF$ needs not be finite, or continuous, or symmetric in its arguments. Note that $\cF_0$ and $\cF_0^+$ as defined in \eqref{eqdef:ReedSheppMetric} and \eqref{eqdef:ReedSheppForwardMetric}, respectively, indeed satisfy the properties in \eqref{eqdef:Metric}.

\begin{remark}
In contrast to the more common definition of Finsler metrics, we will \emph{not} assume the Finsler metric to be smooth on
$T(\bM)$, but use a weaker condition instead. Following \cite{DaChen2016Thesis}, we require that the sets
\begin{equation} \label{controlset}
\cB_\cF(\bp) := \{ \dot \bp \in T_\bp \bM \, | \, \cF(\bp, \dot \bp) \leq 1\}
 \end{equation}
 are closed and vary continuously with respect to the point $\bp\in \bM$ in the sense of the Hausdorff distance.
The sets $\cB_\cF(\bp)$ are illustrated in Fig. \ref{fig:freevspositive_introfig_full} for the models of interest.
The condition implies that a shortest \todo{R5.1: terminology} path exists from $\bp$ to $\bq\in \bM$ whenever $d_\cF(\bp, \bq)$ is finite, and is used to prove convergence results in Appendix \ref{app:WellPosedness}.
\end{remark}

A common technique in optimal control theory is to reformulate the shortest path problem defining the distance $d_\cF(\bp, \bq)$ into a time optimal control problem.
That is, for $p \in [1,\infty]$ one has by H\"{o}lder's (in)equality, time re-parametrization, and by 1-homogeneity of $\mathcal{F}$ in its 2nd entry, that:
\begin{align}
&d_{\mathcal{F}}(\bp,\bq) = \\ \ & \textstyle\inf \small\{\small\int \limits_{0}^{1} \mathcal{F}(\gamma(t),\dot{\gamma}(t))\, {\rm d}t \;|\; \gamma \in \Gamma,\;  \nonumber \gamma(0)=\bp,  \gamma(1)=\bq \small\} \nonumber \\
  =&  \inf \small\{(\textstyle\int \limits_{0}^{1} |\mathcal{F}(\gamma(t),\dot{\gamma}(t))|^{p}\, {\rm d}t) ^{\frac{1}{p}} \!  | \gamma \in \Gamma, \gamma(0)=\bp,  \gamma(1)=\bq \small\} \nonumber \\[8pt]
 =& \inf \small\{ T \geq 0 \;\;|\;\; \exists \gamma \in \Gamma_T, \; \gamma(0)=\bp, \nonumber \\ &\hspace*{3em}\gamma(T)=\bq, \forall_{t \in [0,T]}\,\dot{\gamma}(t) \in \mathcal{B}_{\mathcal{F}}(\gamma(t)) \small\} \label{viewpoint},
\end{align}
where $\Gamma_T :=\Lip([0,T],\bM)$, and with $\cB_\cF(\bp)$ as defined in \eqref{controlset}.
The latter reformulation is used in Appendix~A to prove convergence results via
closedness of controllable paths and Arzela-Ascoli's theorem, based on a general result originally applied to Euler elastica curves in \cite{DaChen2016Thesis}.

In the special case $\mathcal{F}=\mathcal{F}_{0}$ the geodesics are SR geodesics, where $\cF_0$ is obtained by the square root of quadratic form associated to a SR metric $\left.\mathcal{G}_0\right|_{\bp}(\cdot,\cdot)=\mathcal{F}_{0}(\bp,\cdot)^2$ on
a SR manifold $(\mathbb{M}, \Delta, \mathcal{G}_0)$, where $\Delta \subset T(\mathbb{M})$ is a strict subset of allowable tangent vectors that comes along with the horizontality constraint
\begin{equation} \label{hconstraint}
\dot{\ul{x}}(t) = (\dot \bx(t)\cdot \bn(t)) \ul{n}(t), \qquad \forall t \in [0,1],
\end{equation}
that arises from (\ref{eqdef:ReedSheppMetric}). For details on the case $d = 2$ see \cite{boscain_curve_2014,sachkov_cut_2011}, for $d =3$ see \cite{duits_sub-riemannian_2016}.

Finally, we note that for the uniform cost case ($\xi^{-1}\mathcal{C}_{1}=\mathcal{C}_{2}=1$), the problem is covariant with respect to rotations and translations. For the data-driven case, such covariance is only obtained when simultaneously rotating the data-driven cost factors  $\mathcal{C}_{1}, \mathcal{C}_{2}$. Therefore, only in the uniform cost case, for $d=2,3$, we shall use a reference point (`the origin') $\ul{e} \in \R^{d} \times \bS^{d-1}$.
To adhere to common conventions we use
\begin{equation}\label{origin}
	\begin{aligned}
		\ul{e}=(\ul{0},\ul{a}) &\in \R^{d}\times \bS^{d-1}, \textrm{ with } \\ & \ul{a}:=(1,0)^T \qquad \textrm{ if } d=2
		\textrm{ and } \\ &\ul{a}:=(0,0,1)^T \quad \textrm{ if }d=3.
	\end{aligned}
\end{equation}

\subsection{Controllability of the Reeds-Shepp model}\label{subsec:Geometry}
A model $(\bbM, d_\cF)$ is \emph{globally controllable} if the distance $d_\cF$ takes finite values on $\bbM \times \bbM$, in other words, a car can go from any place on the manifold to any other place in finite time \todo{R1.4: context}. In Theorem \ref{th:Controllability} we show that this is indeed the case for $\cF = \cF_0$ and $\cF = \cF_0^+$, given in \eqref{eqdef:ReedSheppMetric} and \eqref{eqdef:ReedSheppForwardMetric}. \emph{Local controllability} is satisfied when $d_\cF$ satisfies a certain continuity requirement: if $\bp \to \bq \in (\mathbb{M},\|\cdot\|)$, with $\|\cdot\|$ denoting the standard (flat) Euclidean norm on $\mathbb{M}=\R^{d} \times \mathbb{S}^{d-1}$, we must have $d_{\mathcal{F}}(\bp,\bq) \to 0$. We prove in Theorem \ref{th:Controllability} that the metric space $(\bbM, d_{\cF_0})$ is locally controllable\todo{E.4: typo}, but the quasi-metric space $(\bbM,d_{\cF_0^+})$ is not. 
Indeed the SR Reeds-Shepp car can achieve sideways motions by alternating the forward and reverse gear with slight direction changes, whereas the model without reverse gear lacks this possibility. For completeness, the theorem contains a standard (rough) estimate of the distance near the source (due to well-known estimates \cite{gromov_carnot-caratheodory_1996,terelst,citti_cortical_2006,jorg}).

Furthermore, we prove existence of minimizers for the Reeds-Shepp model without reverse gear. Existence results of minimizers of the model with reverse gear (the SR model) already exist, by the Chow-Rashevski theorem and Fillipov theorems \cite{agrachev_control_2004}.
\begin{theorem}[(Local) controllability properties]
\label{th:Controllability}
Minimizers exist for both the classical Reeds-Shepp model, and for the Reeds-Shepp model without reverse gear.
Both models are globally controllable.
\begin{itemize}
	\item  The Reeds-Shepp model without reverse gear is not locally controllable, since
\begin{equation} \label{estimate}
\limsup_{\bp' \to \bp} d_{\cF^+_0}(\bp,\bp')\geq 2 \pi \delta, \textrm{ for all }\bp\in \mathbb{M}.
\end{equation}
If the cost $\cC_{2}=\delta$ is constant on $\bM$, then this inequality is sharp:
	\begin{equation} \label{equality}
		\limsup_{\bp' \to \bp} d_{\cF^+_0}(\bp,\bp') = \lim_{\mu\downarrow 0} d_{\cF_0^+} ((\bx,\bn),\, (\bx-\mu\bn,\bn)) = 2 \pi \delta.
	\end{equation}
\item The sub-Riemannian Reeds-Shepp model is locally controllable, since 
	\begin{align}\label{approx}
		\hspace*{-1.5em}d_{\cF_0}(\bp,\bp') &= \cO \left( \cC_{2}(\bp) \|\bn-\bn'\|+ \sqrt{\cC_{2}(\bp)\cC_{1}(\bp)\|\bx-\bx'\|}\right) \ \nonumber \\
		&\hspace{1em}\text{ as } \bp'=(\bx',\bn') \to \bp=(\bx,\bn).
	\end{align}
	\end{itemize}
\end{theorem}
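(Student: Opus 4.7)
My plan is to treat the four assertions---existence of minimizers, global controllability, the failure of local controllability for $\cF_0^+$, and the sub-Riemannian asymptotic \eqref{approx}---one by one, using the optimal-control reformulation \eqref{viewpoint} together with the closedness and continuous dependence of the control sets $\cB_\cF(\bp)$ assumed in the preliminaries.

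For existence I would apply Arzel\`a--Ascoli to a minimizing sequence: the non-degeneracy bound $\cF \geq \delta\sqrt{\|\dot\bx\|^2+\|\dot\bn\|^2}$ of \eqref{eqdef:Metric} yields uniformly Euclidean-Lipschitz reparametrizations, and the closedness of $\cB_\cF(\bp)$ lets the limit path inherit the horizontality (resp.\ forwardness) constraint, giving a minimizer for $\cF_0$ and $\cF_0^+$ alike via the scheme of Appendix~\ref{app:WellPosedness}. For global controllability of both models I would exhibit an explicit three-piece connecting path between $\bp=(\bx,\bn)$ and $\bq=(\bx',\bn')$: rotate in place from $\bn$ to the unit vector $\bm:=(\bx'-\bx)/\|\bx'-\bx\|$ (skipped if $\bx=\bx'$), translate straight forward from $\bx$ to $\bx'$ keeping orientation $\bm$, then rotate in place from $\bm$ to $\bn'$. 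Each piece satisfies $\dot\bx\propto\bn$ and $\dot\bx\cdot\bn\geq 0$, so it is admissible for $\cF_0^+$ and \emph{a fortiori} for $\cF_0$; continuity of $\cC_1,\cC_2$ on the compact image yields finite length.

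For the lower bound in \eqref{estimate} and \eqref{equality}, I would focus on the specific sequence $\bp_\mu:=(\bx-\mu\bn,\bn)\to\bp$. The upper bound is realized explicitly by \emph{rotate in place by $\pi$ (orientation becomes $-\bn$); translate forward by $\mu$ (arriving at $\bx-\mu\bn$); rotate in place by $\pi$ again (orientation back to $\bn$)}, whose $\cF_0^+$-length is $2\pi\,\cC_2(\bp)+\cO(\mu)$ by continuity of $\cC_1,\cC_2$, tending to $2\pi\delta$ when $\cC_2\equiv\delta$. The matching lower bound I would argue by contradiction: any admissible $\gamma=(\bx(\cdot),\bn(\cdot))$ has $\dot\bx=v\bn$ with $v\geq 0$, so its spatial component, reparametrized by arc length $s\in[0,V]$ with $V=\int v\,\mathrm{d}t$, is an immersed curve in $\bR^d$ starting at $\bx$ with tangent $\bn$ and ending at $\bx-\mu\bn$ with the same tangent. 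If along a sequence $\mu_n\to 0^+$ one had paths $\gamma_n$ with $\mathrm{Length}_{\cF_0^+}(\gamma_n)\leq 2\pi\delta-\eta$ for fixed $\eta>0$, Arzel\`a--Ascoli plus lower semicontinuity of length would deliver an admissible limit $\gamma_0$ from $\bp$ to $\bp$ of length $\leq 2\pi\delta-\eta$, whose spatial trace is an immersed \emph{closed} curve in $\bR^d$. But Fenchel's theorem bounds the total absolute curvature of any closed curve in $\bR^d$ ($d\in\{2,3\}$) below by $2\pi$, and the pointwise inequality $\cF_0^+\geq\cC_2\|\dot\bn\|\geq\delta\|\dot\bn\|$ then forces $\mathrm{Length}_{\cF_0^+}(\gamma_0)\geq 2\pi\delta$, a contradiction. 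This yields \eqref{estimate} and the $\geq$ half of \eqref{equality}; the three-piece path of the previous paragraph, applied to an arbitrary $\bp'\to\bp$, gives $d_{\cF_0^+}(\bp,\bp')\leq 2\pi\delta+\cO(\|\bx-\bx'\|)$ when $\cC_2\equiv\delta$, closing the $\limsup$ to equality.

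Finally, for \eqref{approx} I would construct a concatenated $\cF_0$-admissible path: rotate in place from $\bn$ to $\bn'$ (cost $\cO(\cC_2(\bp)\|\bn-\bn'\|)$); decompose $\bx'-\bx=\alpha\bn'+\bw$ with $\bw\perp\bn'$; translate by $\alpha\bn'$ using reverse gear if needed (cost $\cO(\cC_1(\bp)|\alpha|)$); and realize the transverse displacement $\bw$ by the classical step-two sub-Riemannian commutator maneuver $\exp(rX_1)\exp(\alpha' X_2)\exp(-rX_1)\exp(-\alpha' X_2)$, where $X_1$ generates forward motion and $X_2$ rotation, so that Baker--Campbell--Hausdorff produces a net displacement of order $r\alpha'$ in the bracket direction $[X_1,X_2]$ transverse to $\bn'$. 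Choosing $r\alpha'=\|\bw\|$ and optimizing $2\cC_1(\bp)r+2\cC_2(\bp)\alpha'$ under this constraint via AM--GM yields $4\sqrt{\cC_1(\bp)\cC_2(\bp)\|\bw\|}$; since $\|\bw\|,|\alpha|\leq\|\bx-\bx'\|$ and $\cC_1(\bp)|\alpha|$ is dominated by $\sqrt{\cC_1(\bp)\cC_2(\bp)\|\bx-\bx'\|}$ for small $\|\bx-\bx'\|$, the claimed estimate follows. The main technical obstacle throughout is the Arzel\`a--Ascoli passage to the limiting closed loop used in the previous paragraph: this step relies crucially on the continuity (not smoothness) of $\cB_{\cF_0^+}(\bp)$, which is exactly the weaker regularity hypothesis the paper imposes on its Finsler metrics.
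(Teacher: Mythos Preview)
Your treatment of existence, global controllability, and the estimate \eqref{approx} is fine and matches the paper's spirit (the paper cites the weighted sub-coercive estimates of ter~Elst--Robinson rather than building the commutator maneuver by hand, but your explicit construction is an acceptable substitute; for $d=3$ you would need two angular generators to span both transverse directions, a detail worth stating).

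The genuine gap is in your lower bound for \eqref{estimate}--\eqref{equality}. Your contradiction argument passes to an Arzel\`a--Ascoli limit $\gamma_0$ of paths $\gamma_n$ from $(\bx,\bn)$ to $(\bx-\mu_n\bn,\bn)$, and then wants to apply Fenchel to the spatial trace of $\gamma_0$, claimed to be ``an immersed closed curve in $\bR^d$''. But nothing prevents $\gamma_0$ from being the \emph{constant} path at $\bp$: it is admissible, has length $0\leq 2\pi\delta-\eta$, and its spatial trace is a single point, so Fenchel is vacuous and no contradiction arises. Lower semicontinuity of length only gives an upper bound on $\mathrm{Length}(\gamma_0)$, never a lower one, so you cannot rule out this degeneration.

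The paper avoids limits entirely and argues directly on each admissible $\gamma$ for fixed $\mu>0$. From $\dot\bx=v\,\bn$ with $v\geq 0$ and $\bx(1)-\bx(0)=-\mu\bn$ one gets $\mathbf{0}=\mu\,\bn(0)+\int_0^1 v(t)\,\bn(t)\,\mathrm{d}t$, hence $\mathbf{0}\in\mathrm{Hull}\{\bn(t):t\in[0,1]\}$. A short sphere lemma (Lemma~\ref{lemma:JM} in the paper) then shows: any $2\pi$-periodic, strictly $1$-Lipschitz $\tilde\bn:\bR\to\bS^{d-1}$ lies in an open hemisphere, so $\mathbf{0}\notin\mathrm{Hull}$. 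Reparametrizing $\bn(\cdot)$ at constant speed on $[0,2\pi]$, the contrapositive forces speed $\geq 1$, i.e.\ $\int_0^1\|\dot\bn\|\,\mathrm{d}t\geq 2\pi$, whence $\mathrm{Length}_{\cF_0^+}(\gamma)\geq\delta\int\|\dot\bn\|\geq 2\pi\delta$ for every such $\gamma$ and every $\mu>0$. No compactness is needed. Your Fenchel intuition is in fact equivalent to this sphere lemma (it is the statement that a closed curve on $\bS^{d-1}$ whose convex hull contains the origin has length $\geq 2\pi$), and could be made to work if applied directly to $\gamma_n$ rather than to the limit; but as written, the Arzel\`a--Ascoli step is both unnecessary and broken.
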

For a proof see Section~\ref{ch:3}.

\subsection{A Continuous Approximation for the Reeds-Shepp geometry}
\label{subsec:ApproximateReedShepp}

We introduce approximations $\cF_\ve$ and $\cF_\ve^+$ of the Finsler metrics $\cF_0$ and $\cF_0^+$, depending on a small parameter $0< \ve \leq 1$, which are continuous and in particular take only finite values. This is a prerequisite for our numerical methods. Both approximations penalize the deviation from the constraints of collinearity $\dot \bx \propto \bn$, and in addition, $\cF_\ve^+$ penalizes negativity of the scalar product $\dot \bx \cdot \bn$, appearing in \eqref{eqdef:ReedSheppMetric} and \eqref{eqdef:ReedSheppForwardMetric}. For that purpose, we introduce some additional notation: for $\dot \bx \in \bR^d$ and $\bn\in \bS^{d-1}$ we define
\begin{align}\label{eq:wedgeminmax}
	\| \dot \bx \wedge \bn \|^2 &:= \| \dot \bx \|^2 - |\dot \bx \cdot \bn|^2,  \\
	(\dot \bx \cdot \bn)_- &:= \min \{0,  \dot \bx \cdot \bn\}, \quad (\dot \bx \cdot \bn)_+ := \max\{\dot \bx \cdot \bn,0\}.\nonumber
\end{align}
These are respectively the norm of the orthogonal projection\footnote{The quantity $\| \dot \bx \wedge \bn \|$ is also the norm of the wedge product of $\dot \bx$ and $\bn$, but defining it this way would require introducing  some algebra which is not needed in the rest of this paper.} of $\dot \bx$ onto the plane orthogonal to $\bn$, and the negative and positive parts of their scalar product.
The two metrics $\cF_\ve,\cF_\ve^+ : T(\bM) \to \bR_+$ are defined for each $0<\ve \leq 1$, as follows: for $(\bp, \dot \bp) \in T(\bM)$ with components $\bp = (\bx,\bn)$ and $\dot \bp = (\dot \bx,\dot\bn)$ we define
\begin{alignat}{2} \label{importantFs1}
	\cF_\ve(\bp, \dot \bp)^2 &:=  &&\cC_{1}(\bp)^2 ( |\dot \bx \cdot \bn|^2 + \ve^{-2}\|\dot \bx \wedge \bn\|^2) + \nonumber \\ &&& \cC_{2}(\bp)^2 \| \dot \bn\|^2,\\[3mm]
	\cF^+_\ve(\bp, \dot \bp)^2 &:= &&\, \cC_{1}(\bp)^2 ( |\dot \bx \cdot \bn|^2 + \ve^{-2}\|\dot \bx \wedge \bn\|^2 + \nonumber \\ &&& (\ve^{-2}-1) (\dot \bx \cdot \bn)_-^2) +\, \cC_{2}(\bp)^2 \| \dot \bn\|^2 \label{importantFs2} \\[6pt]
 &= &&\cC_{1}(\bp)^2 ( (\dot \bx \cdot \bn)_+^2 + \ve^{-2}\|\dot \bx \wedge \bn\|^2 + \nonumber \\ &&& \ve^{-2} (\dot \bx \cdot \bn)_-^2) \, +\, \cC_{2}(\bp)^2 \| \dot \bn\|^2.\label{importantFs3}
\end{alignat}

See Fig. \ref{fig:approximatemetric} for a visualization of a level set of both metrics in $\R^2 \times \bS^1$. Note that $\cF_\ve$ is a Riemannian metric on $\bM$ (with the same smoothness as the cost functions $\cC_{2}, \cC_{1}$), and that $\cF_\ve^+$ is neither Riemannian nor smooth due to the term $(\dot \bx \cdot \bn)_-$. One clearly has the pointwise convergence $\cF_\ve(\bp, \dot \bp) \to \cF_0(\bp, \dot \bp)$ as $\ve\to 0$, and likewise $\cF^+_\ve(\bp, \dot \bp) \to \cF^+_0(\bp, \dot \bp)$. The use of $\cF_\ve$ and $\cF_\ve^+$ is further justified by the following convergence result.

\begin{figure}[t]
\centering
\includegraphics[width=0.9 \hsize]{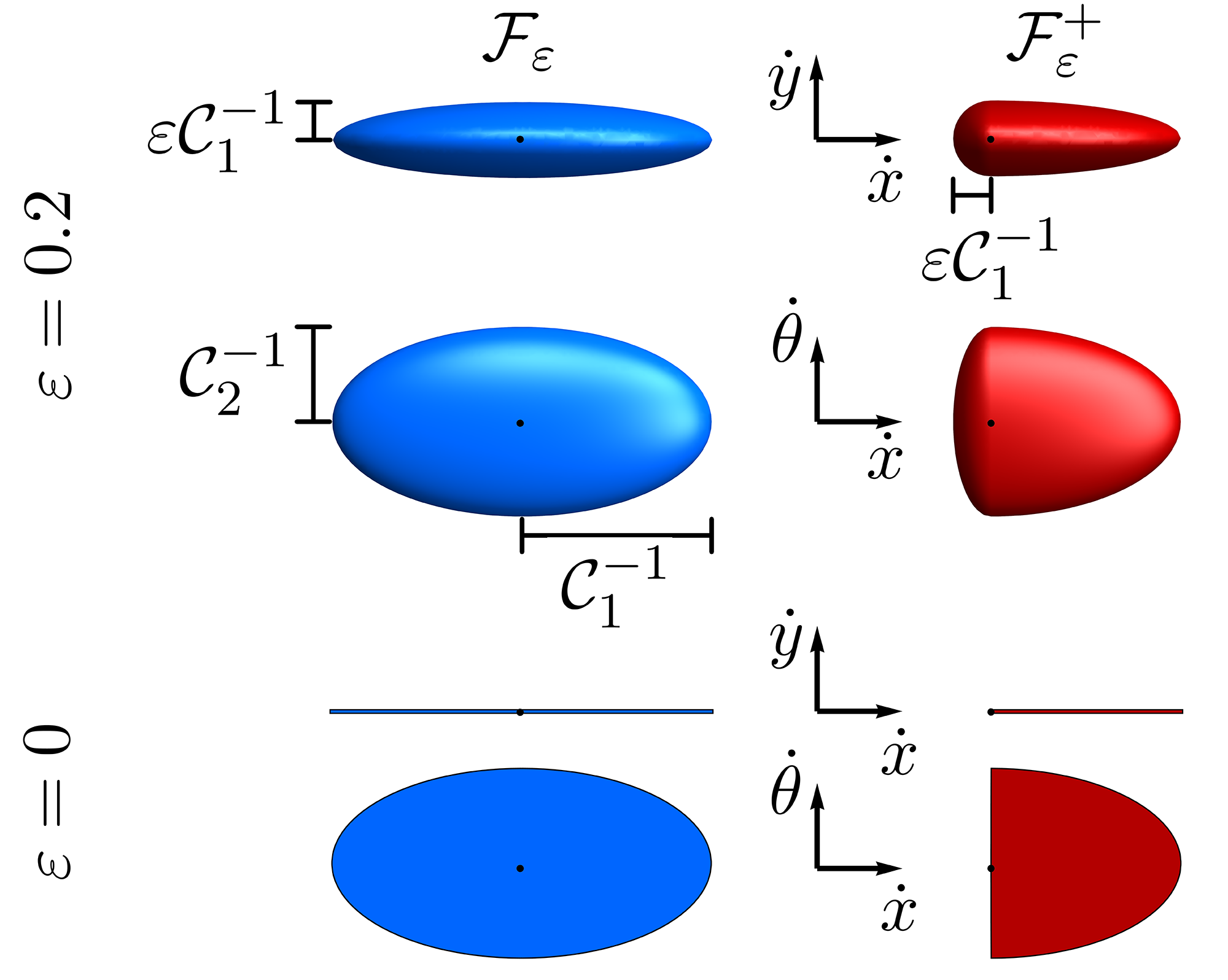}
\caption{Levelsets for $d = 2$ of the (approximating) metrics $\cF_\ve(\mathbf{0},(\dot{x},\dot{y},\dot{\theta})) = 1$ (left) and $\cF^+_\ve(\mathbf{0},(\dot{x},\dot{y},\dot{\theta})) = 1$ (right), with $\ve = 0.2$ (top) and $\ve = 0$ (bottom). In this example, $\cC_2(\mathbf{0}) = 2 \cC_1(\mathbf{0})$.}\label{fig:approximatemetric}
\end{figure}

\begin{theorem}[Convergence of the Approximative Models to the Exact Models]
\label{th:ReedSheppCV}
One has the pointwise convergence: for any $\bp,\bq\in \bM$
\begin{equation*}
	\begin{split}
		d_{\cF_\ve}(\bp,\bq) &\to d_{\cF_0}(\bp,\bq), \\
		d_{\cF_\ve^+}(\bp,\bq) &\to d_{\cF_0^+}(\bp,\bq),
	\end{split}
	\quad \text{as } \ve \to 0.
\end{equation*}

Consider for each $\ve>0$ a minimizing path $\gamma_\ve^*$ from $\bp$ to $\bq$, with respect to the metric $\cF_\ve$, parametrized at constant speed
\begin{equation*}
\mathcal{F}_{\ve}(\gamma^*_{\ve}(t),\dot{\gamma}_{\ve}^*(t))= d_{\mathcal{F}_{\ve}}(\bp,\bq), \qquad \forall t \in [0,1].
\end{equation*}
Assume that there is a unique shortest \todo{R5.1: terminology} path $\gamma^*$ from $\bp$ to $\bq$ with respect to the sub-Riemannian distance $d_{\mathcal{F}_{0}}$ (in other words $\bq$ is not within the cut locus of $\bp$), parametrized at constant speed:
\[
\mathcal{F}_{0}(\gamma^*(t),\dot{\gamma}^*(t))= d_{\mathcal{F}_{0}}(\bp,\bq), \qquad \forall t \in [0,1].
\]
Then $\gamma_{\ve}^* \to \gamma^*$ as $\ve\to 0$, uniformly on $[0,1]$. \
Likewise replacing $\mathcal{F}_{\ve}$ with $\mathcal{F}_{\ve}^+$ for all $\ve \geq 0$.
\end{theorem}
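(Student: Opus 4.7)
The plan is to use the control-theoretic reformulation \eqref{viewpoint} together with the closedness-of-controllable-paths argument that underlies Appendix \ref{app:WellPosedness}. First, one observes that $\cF_\ve \leq \cF_0$ and $\cF_\ve^+\leq \cF_0^+$ pointwise on $T(\bM)$: on horizontal tangent vectors ($\dot\bx\propto \bn$, and additionally $\dot\bx\cdot \bn\geq 0$ in the asymmetric case) one has $\|\dot\bx\wedge\bn\|^2 = 0$ so the two metrics coincide, whereas $\cF_0$ and $\cF_0^+$ are infinite off the admissible set. Consequently any $\cF_0$-minimizer is an admissible $\cF_\ve$-path with identical length, giving immediately
\[
d_{\cF_\ve}(\bp,\bq)\leq d_{\cF_0}(\bp,\bq)=:T_0<\infty,
\]
the finiteness coming from the global controllability in Theorem \ref{th:Controllability}. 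All the work therefore lies in the opposite inequality $\liminf_{\ve\to 0}d_{\cF_\ve}(\bp,\bq)\geq T_0$.

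For that, let $T_\ve := d_{\cF_\ve}(\bp,\bq)$ and consider the minimizers $\gamma_\ve^*$ on $[0,1]$ parametrized at constant $\cF_\ve$-speed $T_\ve\leq T_0$. The uniform non-degeneracy \eqref{eqdef:Metric} gives $\|\dot\gamma_\ve^*(t)\|\leq T_\ve/\delta \leq T_0/\delta$, so the family $\{\gamma_\ve^*\}_{0<\ve\le 1}$ is equi-Lipschitz and equi-bounded in $\bM$. Arzelà--Ascoli extracts a subsequence $\gamma_{\ve_n}^*\to \tilde\gamma$ uniformly on $[0,1]$ with $\tilde\gamma$ Lipschitz, $\tilde\gamma(0)=\bp$, $\tilde\gamma(1)=\bq$, and one may further assume $T_{\ve_n}\to \bar T\in [0,T_0]$.

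The main obstacle is to show that $\tilde\gamma$ is $\cF_0$-admissible (respectively $\cF_0^+$-admissible). The quantitative estimate comes from \eqref{importantFs1}: the identity $\cF_{\ve_n}(\gamma_{\ve_n}^*,\dot\gamma_{\ve_n}^*)=T_{\ve_n}$ forces $\ve_n^{-2}\cC_1(\gamma_{\ve_n}^*)^2\|\dot\bx_{\ve_n}^*\wedge\bn_{\ve_n}^*\|^2 \leq T_0^2$, hence $\|\dot\bx_{\ve_n}^*(t)\wedge \bn_{\ve_n}^*(t)\|=O(\ve_n)$ almost everywhere, and analogously \eqref{importantFs2} yields $(\dot\bx_{\ve_n}^*\cdot \bn_{\ve_n}^*)_-=O(\ve_n)$ in the asymmetric case. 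Equivalently, writing $\dot\gamma_{\ve_n}^*(t)/T_{\ve_n}\in \cB_{\cF_{\ve_n}}(\gamma_{\ve_n}^*(t))$ and noting that the control sets $\cB_{\cF_\ve}(\bp)$ Hausdorff-converge to $\cB_{\cF_0}(\bp)$ (and likewise for the asymmetric variant) uniformly for $\bp$ in compact subsets of $\bM$, since $\cC_1,\cC_2$ are continuous and bounded below, the closedness-of-controllable-paths result recalled in Appendix \ref{app:WellPosedness} yields $\dot{\tilde\gamma}(t)/\bar T\in \cB_{\cF_0}(\tilde\gamma(t))$ for almost every $t$.

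This admissibility yields $\Length_{\cF_0}(\tilde\gamma)=\int_0^1\cF_0(\tilde\gamma,\dot{\tilde\gamma})\,dt\leq \bar T$ by $1$-homogeneity. Combining with $\Length_{\cF_0}(\tilde\gamma)\geq d_{\cF_0}(\bp,\bq)= T_0$ and $\bar T\leq T_0$ forces $\bar T = T_0$ and $\Length_{\cF_0}(\tilde\gamma)=T_0$; hence $\tilde\gamma$ is an $\cF_0$-minimizer and $T_{\ve_n}\to T_0$. Since this holds for every subsequence extracted from $(\gamma_\ve^*)$, the full family of distances satisfies $d_{\cF_\ve}(\bp,\bq)\to T_0$ as $\ve\to 0$, proving the distance claim. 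Under the uniqueness hypothesis one has $\tilde\gamma=\gamma^*$, and the standard subsequence-of-subsequence argument upgrades the subsequential uniform convergence to $\gamma_\ve^*\to\gamma^*$ uniformly on $[0,1]$. The asymmetric case is identical, substituting $\cF_\ve^+,\cF_0^+$ throughout and using the additional sign estimate above.
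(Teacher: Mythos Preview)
Your proof is correct and follows essentially the same route as the paper: you reformulate the distance as a minimal-time control problem via \eqref{viewpoint}, use the monotonicity $\cF_\ve\leq\cF_0$ for one inequality, and for the other extract a uniform limit by Arzel\`a--Ascoli and invoke the closedness-of-controllable-paths result together with the Hausdorff convergence $\cB_{\cF_\ve}\to\cB_{\cF_0}$, which is precisely Corollary~\ref{corol:CVMinOC} specialized via Proposition~\ref{prop:ReedSheppContinuous} and Lemma~\ref{lem:ExplicitHaussDist}. The only cosmetic difference is that the paper packages the compactness/closedness step abstractly and then proves the Hausdorff convergence of the control sets by an explicit estimate, whereas you unpack the argument directly and obtain the same estimate from the $O(\ve)$ bounds on $\|\dot\bx\wedge\bn\|$ and $(\dot\bx\cdot\bn)_-$.
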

The proof, presented in Appendix \ref{app:WellPosedness} is based on a general result originally applied to the Euler elastica curves in \cite{DaChen2016Thesis}.
Combining Theorem \ref{th:ReedSheppCV} with the local controllability properties established in Theorem \ref{th:Controllability}, one obtains that $d_{\cF_\ve} \to d_{\cF_0}$ locally uniformly on $\bM \times \bM$, and that the convergence $d_{\cF_\ve^+} \to d_{\cF_0^+}$ is only pointwise.

\begin{remark}
If there exists a family of minimizing \todo{R5.1: terminology} geodesics $(\gamma_i^*)_{i \in I}$ from $\bp$ to $\bq$ with respect to $\cF_0$ (resp.\ $\cF_0^+$), then one can show that for any sequence $\ve_n \to 0$ one can find a subsequence and an index $i \in I$ such that $\gamma_{\ve_{\vp(n)}}^* \to \gamma_i^*$ uniformly as $n\to \infty$.
\end{remark}
\begin{figure*}
\centerline{
\includegraphics[width=0.45 \hsize]{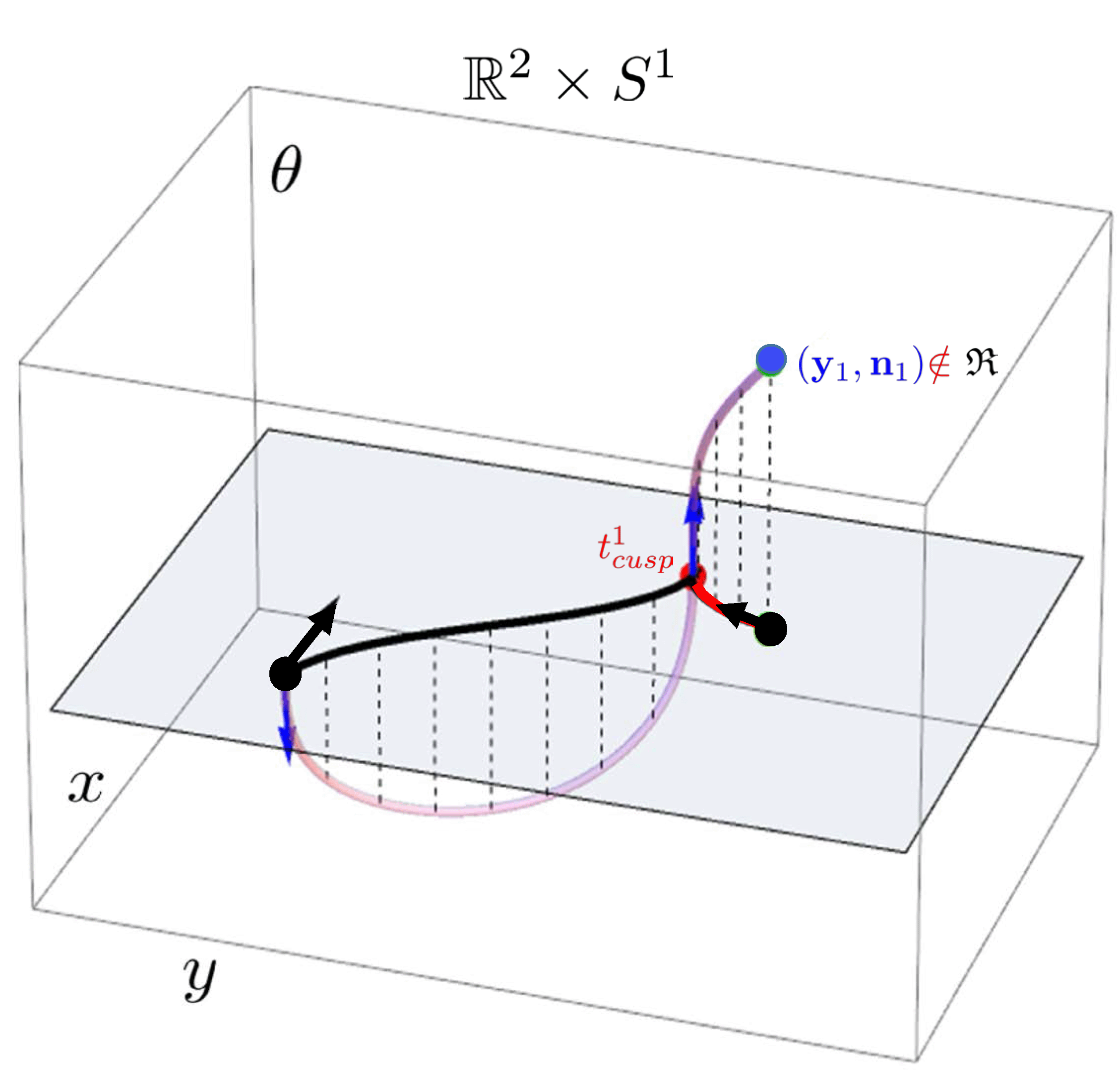}
\includegraphics[width=0.55 \hsize]{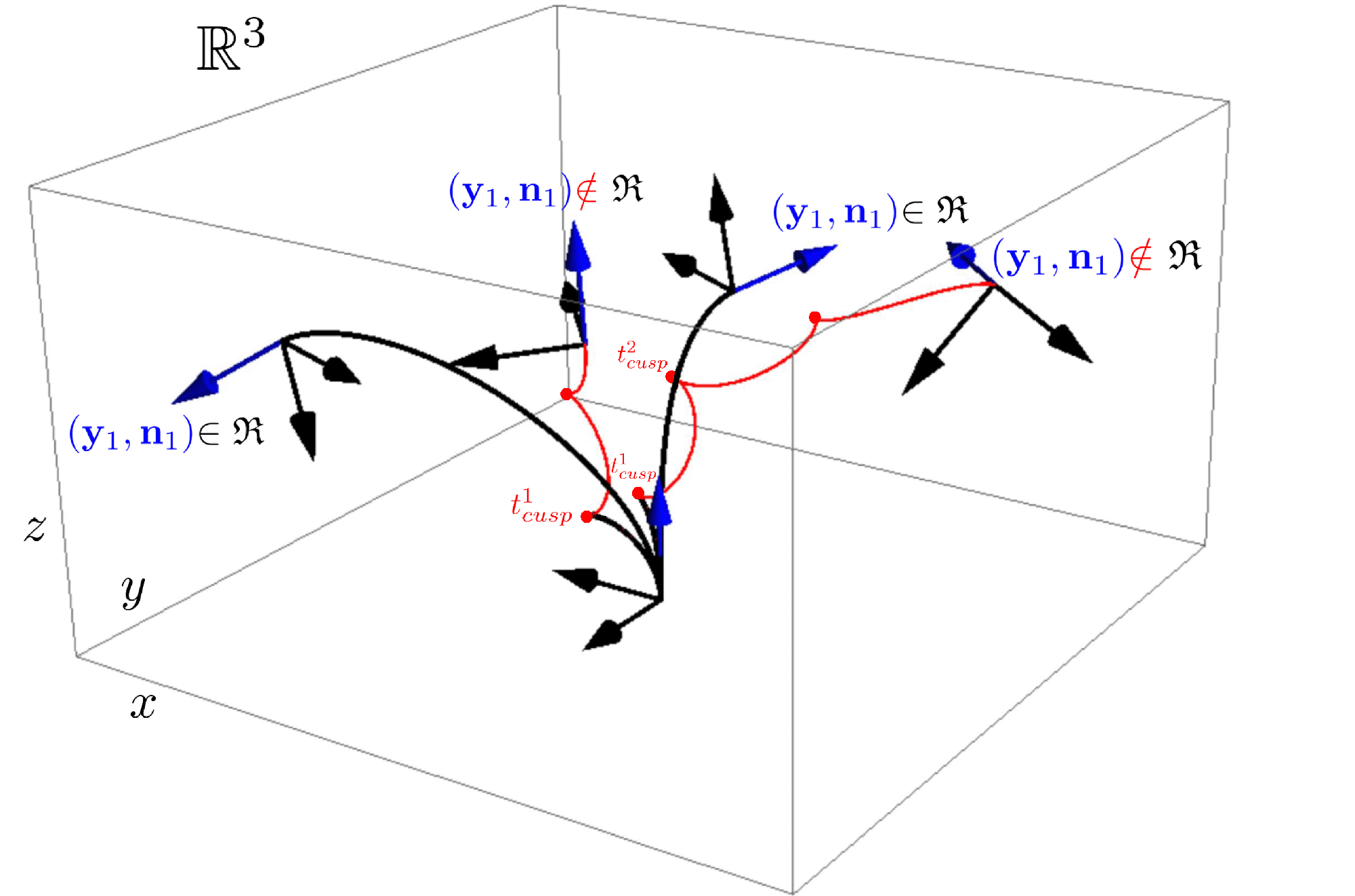}
}
\caption{Illustration of cusps in SR ($\ve=0$) geodesics (possibly non-optimal) in $\mathbb{M}=\mathbb{R}^{d} \times \bS^{d-1}$. Left: cusps in spatial projections $\ul{x}(\cdot)$ of SR geodesics $\bgamma(\cdot)=(\ul{x}(\cdot),\ul{n}(\cdot))$ for $d=2$, right: cusps (red dots) appearing in spatial projections of SR geodesics for $d=3$. In the 3D case we indicate the corresponding rotations $\mathbf{R}_{\ul{n}_1}$ via a local 3D frame.\label{fig:cusp}}
\end{figure*}

\subsection{Points of Interest in Spatial Projections of Geodesics for the Uniform Cost Case: \mbox{Cusps vs. Keypoints} \label{subsec:interestpoints}}

Next we provide a theorem that tells us in each of the models/metric spaces $(\mathbb{M},d_{\mathcal{F}_{0}})$,  $(\mathbb{M},d_{\mathcal{F}_{\ve}})$ and $(\mathbb{M},d_{\mathcal{F}_{0}^+})$,  $(\mathbb{M},d_{\mathcal{F}_{\ve}^+})$,
with $\cC_1=\cC_2=1$ and $d=2$ where cusps occur in spatial projections of geodesics or where keypoints with in-place rotations take place.

Note that for vessel-tracking applications, cusps are not wanted (there is no reason why the entering angle should be the same as the departing angle), whereas keypoints are only desirable at bifurcations\todo{R1.4:cusps/keypoints}.
\begin{definition}[Cusp]\label{def:cusp}
A cusp point $\ul{x}(t_0)$ on a spatial projection of a (SR) geodesic $ t \mapsto (\ul{x}(t),\ul{n}(t))$ in $\mathbb{M}$ is
a point where
\begin{equation}\label{def:utilde}
\begin{array}{l}
\tilde{u} (t_0)=0, \textrm{ and }\ \dot{\tilde{u}}(t_0) \neq 0, \\[5pt]
\text{where } \tilde{u}(t):= \ul{n}(t) \cdot \dot{\ul{x}}(t) \textrm{ for all }t.
\end{array}
\end{equation}
I.e. a cusp point is a point where the spatial control aligned with $\ul{n}(t_0)$ vanishes and switches sign locally.
\end{definition}
Although this definition explains the notion of a cusp geometrically (as can be observed in Fig.~\ref{fig:freevspositive_introfig_full} and Fig.~\ref{fig:cusp}), it contains a redundant part for the relevant case of interest:
the second condition automatically follows when considering the SR geodesics in $(\mathbb{M}, d_{\mathcal{F}_{0}})$. The following lemma gives a characterization of a cusp point in terms of the distance function along a curve.
\begin{lemma}\label{lem:cusp}
Consider a SR geodesic $\gamma=(\ul{x},\ul{n}) : [0,1] \to (\mathbb{M}, d_{\mathcal{F}_{0}})$, parametrized at constant speed, and which physical position $\bx(\cdot)$ is not identically constant. Denote $\bp_S := \gamma(0)$ and $U(\cdot):= d_{\mathcal{F}_{0}}(\bp_{S},\cdot)$. Let $t_0\in (0,1)$ be such that $U$ is differentiable at $\gamma(t_0) =(\ul{x}(t_0),\ul{n}(t_0))$. Then
\begin{equation}
\begin{split}
\bx(t_0) \textrm{ is a cusp point }\desda \ul{n}(t_0) \cdot \dot{\ul{x}}(t_0) =0 \\
\desda \ul{n}(t_0) \cdot \nabla_{\R^{d}} U(\ul{x}(t_0),\ul{n}(t_0))=0.
\end{split}
\end{equation}
\end{lemma}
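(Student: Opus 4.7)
The plan is to treat the two equivalences separately: the first, $\bx(t_0)$ cusp $\Leftrightarrow \ul n(t_0)\cdot \dot{\ul x}(t_0) = 0$, rests on the Hamiltonian structure of SR geodesics in $(\bM, d_{\cF_0})$, and the second, $\ul n(t_0)\cdot \dot{\ul x}(t_0) = 0 \Leftrightarrow \ul n(t_0) \cdot \nabla_{\R^d} U(\gamma(t_0)) = 0$, is a Legendre-duality statement extracted from the eikonal PDE \eqref{eqdef:EikonalPDE}.

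For the first equivalence, one direction is immediate from Definition~\ref{def:cusp}: every cusp satisfies $\tilde u(t_0) = 0$. For the converse I would show that along an SR geodesic $\gamma$ with $\bx(\cdot)$ non-constant, any zero of $\tilde u$ is simple, so that the extra condition $\dot{\tilde u}(t_0) \neq 0$ in Definition~\ref{def:cusp} is automatically satisfied. The key ingredient is that SR geodesics are projections of flows of a real-analytic Hamiltonian system on $T^*\bM$ (obtained from Pontryagin's maximum principle with Hamiltonian $H = \tfrac{1}{2}(\cF_0^*)^2$), so $\tilde u$ is real-analytic in $t$. If $\tilde u$ vanished to all orders at $t_0$, real-analyticity would give $\tilde u \equiv 0$, hence $\dot{\ul x} \equiv 0$ via the horizontality constraint \eqref{hconstraint}, contradicting the assumption that $\bx$ is not identically constant. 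The sharper statement that the zero is of order exactly one follows from the explicit extremal equations for the Reeds-Shepp problem, namely Sachkov's optimal synthesis \cite{sachkov_cut_2011} for $d=2$ and Duits et al.\ \cite{duits_sub-riemannian_2016} for $d=3$: the pendulum-type ODE governing $\tilde u$ along an extremal shows that $\tilde u(t_0) = \dot{\tilde u}(t_0) = 0$ would force the extremal to reduce to a pure rotation.

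For the second equivalence, I would first compute the dual metric $\cF_0^*$. A direct Legendre transform of the constrained quadratic form \eqref{eqdef:ReedSheppMetric}, namely maximizing $\langle (p_x, p_n),(\lambda \ul n, \dot{\ul n}) \rangle$ over $\cC_1^2 \lambda^2 + \cC_2^2 \|\dot{\ul n}\|^2 \leq 1$, yields
\[
\cF_0^*(\bp,(p_x,p_n))^2 \;=\; \cC_1^{-2}(p_x\cdot \ul n)^2 + \cC_2^{-2}\|p_n\|^2 ,
\]
so the dual only senses the $\ul n$-component of the spatial momentum. Along a minimizing geodesic parametrized at constant speed, standard duality via the Hamilton-Jacobi eikonal equation identifies the costate with $\mathrm{d} U(\gamma(t))$, up to a positive scaling fixed by the parametrization, and Hamilton's equation $\dot{\ul x} = \partial_{p_x} H$ with $H = \tfrac{1}{2}(\cF_0^*)^2$ yields the Legendre relation
\[
\dot{\ul x}(t) \;=\; \mu\, \cC_1^{-2}(\gamma(t))\,\bigl(\nabla_{\R^d} U(\gamma(t)) \cdot \ul n(t)\bigr)\,\ul n(t),
\]
for a constant $\mu > 0$. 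Taking the inner product with $\ul n(t_0)$ gives $\tilde u(t_0) = \mu\, \cC_1^{-2}(\gamma(t_0))\,(\nabla_{\R^d} U(\gamma(t_0)) \cdot \ul n(t_0))$, and the equivalence follows from the strict positivity of $\mu\, \cC_1^{-2}(\gamma(t_0))$.

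The main obstacle I expect is the sharpness in the first equivalence, i.e.\ upgrading \emph{isolated zero} to \emph{simple zero}. The real-analytic argument by itself only delivers isolation, and ruling out higher-order tangencies of $\tilde u$ genuinely requires unpacking the Reeds-Shepp Hamiltonian ODE in the dimension under consideration, leaning on the explicit extremal syntheses cited above. The second equivalence, by contrast, is a routine Legendre computation once the eikonal equation and the degenerate form of $\cF_0^*$ are in place.
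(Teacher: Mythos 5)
Your proposal is correct and follows essentially the same route as the paper's proof: for $\tilde u(t_0)=0\Rightarrow\dot{\tilde u}(t_0)\neq 0$, the paper likewise appeals to the PMP/Hamiltonian structure (showing $\tilde\lambda(t_0)=\dot{\tilde\lambda}(t_0)=0$ would force $\tilde\lambda\equiv0$, hence $\tilde u\equiv0$, via the co-adjoint/phase-portrait argument and the chain rule for the momentum $\tilde\lambda=\cC_1^{-2}\tilde u$), and for the second equivalence the paper simply cites Theorems~\ref{th:Backtracing} and~\ref{th:ReedSheppCV}, which encode precisely the Legendre-duality computation $\dot{\ul x}\propto(\nabla_{\R^d}U\cdot\ul n)\,\ul n$ that you carry out explicitly. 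Your intermediate real-analyticity step is not needed once the Hamiltonian ODE argument is in place, and your remark about the genuine work being in ruling out higher-order tangencies accurately pinpoints the same crux the paper handles with the momentum chain rule plus the PMP equations of \cite{duits_cuspless_2014,moiseev_maxwell_2010}.
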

The proof can be found in Appendix~\ref{app:cusp}.

\begin{definition}[Keypoint]\label{def:keypoint}
A point $\tilde{\bx}$ on the spatial projection of a geodesic $\gamma(\cdot) = (\bx(\cdot),\bn(\cdot))$ in $\bbM$ is a keypoint of $\gamma$ if there exist $t_0 < t_1$, such that $\bx(t) = \tilde{\bx}$ and $\dot{\bn}(t) \neq 0$ for all $t \in [t_0,t_1]$, i.e., a point where an in-place rotation takes place.
\end{definition}

\begin{definition}\label{def:cusplessset}
 We define the set $\gothic{R} \subset \bbM$ to be all endpoints that can be reached with a geodesic $\gamma^*:[0,1] \to \mathbb{M}$ in
 $(\mathbb{M},d_{\mathcal{F}_0})$ whose spatial control $\tilde{u}(t)$ stays positive for all $t \in [0,1]$.
\end{definition}

\begin{remark}\label{rem:globalmingeodesic}
The word `geodesic' in this definition can (in the case $d = 2$) be replaced by `globally minimizing geodesic' \cite{boscain_curve_2014}. For a definition in terms of the exponential map of a geometrical control problem $\ul{P}_{curve}$, see e.g. \cite{duits_association_2013,duits_cuspless_2014}, in which the same positivity condition for $\tilde{u}$ is imposed. Fig.~\ref{fig:R} shows more precisely what this set looks like for $d = 2$ \cite{duits_association_2013}, in particular that it is contained in the half-space $\ul{a} \cdot \ul{x} \geq 0$, and for $d = 3$ \cite{duits_cuspless_2014}. We extend these results with the following theorem.
\end{remark}

\begin{figure*}
\includegraphics[width=\hsize]{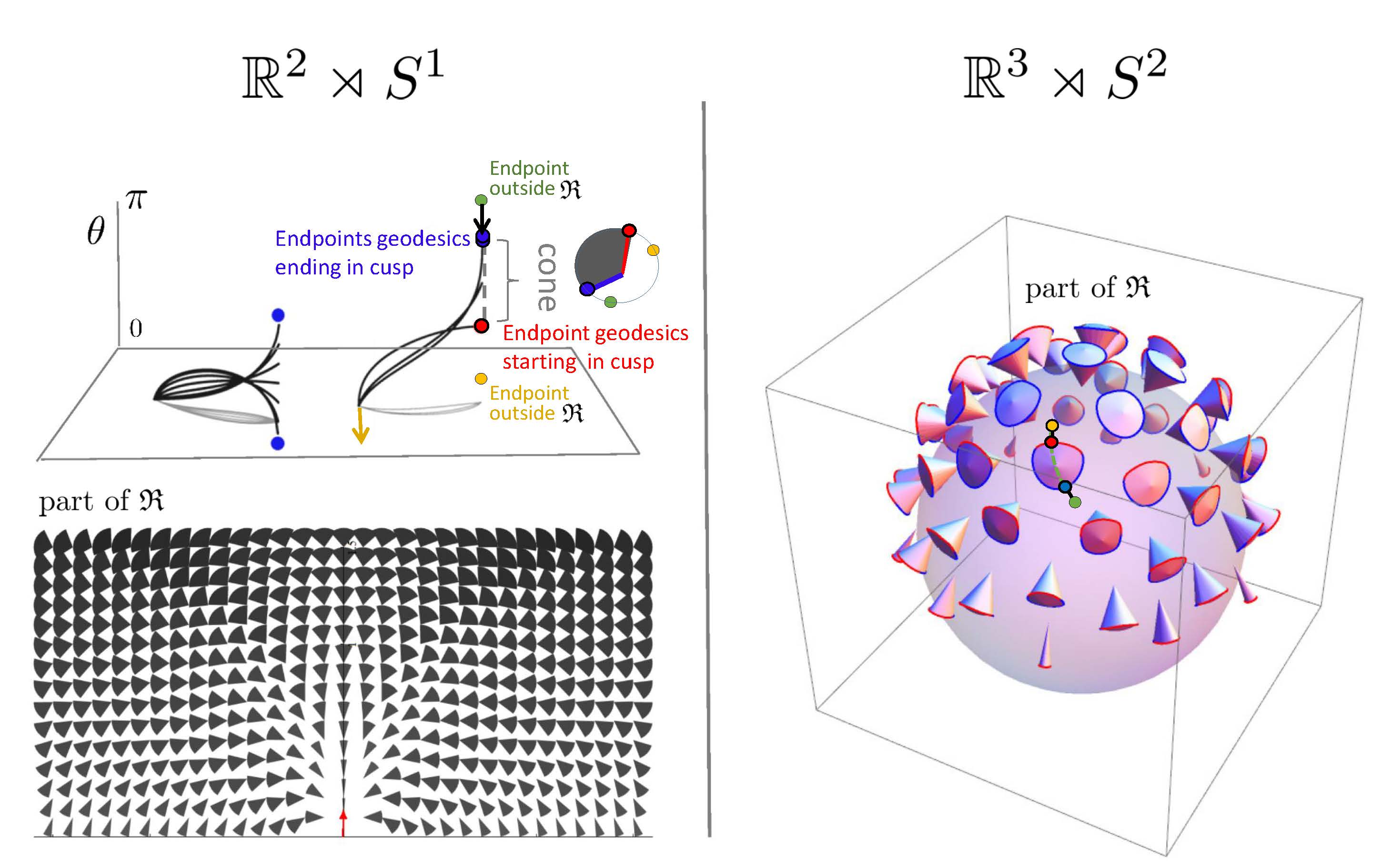}
\caption{The set $\gothic{R}$ of endpoints reachable from the origin $\ul{e}$ (recall (\ref{origin})) via SR geodesics whose spatial projections do not exhibit cusps has been studied for the case $d=2$ (left), and for the case $d=3$ (right). For $d=2$ it is contained in $x \geq 0$ and for $d=3$ it is contained in $z\geq 0$. The boundary of this set contains of endpoints of geodesics departing at a cusp (in red) or of endpoints of geodesics ending in a cusp (in blue).
If an endpoint $(\ul{x},\ul{n})$ is placed outside $\gothic{R}$ (e.g. the green points above) then following the approach in Theorem~\ref{th:Backtracing},
depending on its initial spatial location it first connects to a blue point $(\ul{x},\ul{n}_{new})$ via a spherical geodesic end then connects to the origin $\ul{e}$ via a SR geodesic. Then it has a keypoint at the endpoint. For other locations spatial locations (orange points), the geodesic has the keypoint in the origin, or even at both boundaries, cf.~Fig.~\!\ref{fig:JorgSE2}. \label{fig:R}}
\end{figure*}
\begin{theorem}[Cusps and Keypoints]
\label{th:CuspsAndRotations}
Let $\ve>0$, $d=2$, $\cC_{1}=\cC_{2}=1$.
 Then,
\begin{itemize}
\item in $(\mathbb{M},d_{\mathcal{F}_{0}})$ cusps are present in spatial projections of almost every optimal SR geodesics when their times $t$ are extended on the real line (until they lose optimality) \todo{R.5.1 Terminology}. The straight-lines connecting specific boundary points $\ul{p}=(\ul{x},\ul{n})$ and $\ul{q}=(\ul{x}+ \lambda \ul{n},\ul{n})$ with $\lambda \in \R$ are the only exceptions.
\item  in $(\mathbb{M},d_{\mathcal{F}_{\ve}^+})$  and $(\mathbb{M},d_{\mathcal{F}_{\ve}})$ and $(\mathbb{M},d_{\mathcal{F}_{0}^+})$ no cusps appear in spatial projections of geodesics.
\end{itemize}
Furthermore,
\begin{itemize}
\item in $(\mathbb{M},d_{\mathcal{F}_{0}})$, $(\mathbb{M},d_{\mathcal{F}_{\ve}})$ and $(\mathbb{M},d_{\mathcal{F}_{\ve}^+})$ keypoints only occur with vertical geodesics (moving only angularly).
\item  in $(\mathbb{M},d_{\mathcal{F}_{0}^+})$ keypoints only occur at the endpoints of shortest \todo{R5.1: terminology} paths.
\end{itemize}
A minimizing geodesic $\gamma_{+}$ in $(\mathbb{M},d_{\mathcal{F}_{0}^+})$ departing from $\ul{e}=(0,0,0)$ and ending in $\bp=(x,y,\theta)$ has
\begin{itemize}
\item[A)] no keypoint if $\bp \in \overline{\gothic{R}}$,
\item[B)] a keypoint in $(0,0)$ if $x<0$,
\item[C)] a keypoint \emph{only} in $(x,y)$ if\footnote{Here $\overline{\gothic{R}}^{c}=\mathbb{M} \setminus \overline{\gothic{R}}$ denotes the complement of the closure $\overline{\gothic{R}}$ of $\gothic{R}$, and $E(z,m)=\int_{0}^z \sqrt{1- m \sin^2v}\, {\rm d}v$.}
\begin{enumerate}
\item[C1)] $\bp \in \overline{\gothic{R}}^{c}$ and $x \geq 2$,
\item[C2)] $\bp \in \overline{\gothic{R}}^{c}$ and $0 \leq x < 2$ and \\ $|y| \leq -i x \, E \left(i \textrm{arcsinh}\left(\frac{x}{\sqrt{4-x^2}}\right), \frac{x^2-4}{x^2}  \right)$, where $E(z,m)$ denotes the Elliptic integral of the second kind.
    \end{enumerate}
\end{itemize}

\end{theorem}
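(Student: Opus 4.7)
My plan is to split the theorem into its three natural blocks (cusps, keypoints, and the A/B/C classification) and to treat each by combining the Pontryagin Maximum Principle (PMP) with analyticity of geodesics and the known geometry of the reachable set $\gothic{R}$.

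For the cusps statements, the key tool is the PMP for $SE(2)$ sub-Riemannian geometry, as already exploited in \cite{moiseev_maxwell_2010,sachkov_cut_2011}. With uniform cost, the tangential control $\tilde{u}(t)=\ul{n}(t)\cdot \dot{\ul{x}}(t)$ along a geodesic of $\mathcal{F}_0$ satisfies a pendulum-type ODE whose generic solutions are elliptic functions that cross zero transversally, producing cusps on every period. The only solutions whose $\tilde{u}$ keeps a constant sign correspond to vanishing angular momentum and project to the straight lines listed in the exception clause. For $\mathcal{F}_0^+$ and $\mathcal{F}_\ve^+$ the constraint $\tilde{u}\geq 0$ is already built into the control set (see Fig.~\ref{fig:freevspositive_introfig_full}), so the sign change required by Definition~\ref{def:cusp} is structurally impossible. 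For the strictly Riemannian $\mathcal{F}_\ve$ one further argues that the transverse component $\|\dot{\ul{x}}\wedge\ul{n}\|$ is activated whenever $\tilde{u}$ passes through zero, so $\dot{\ul{x}}(t_0)$ does not vanish and the spatial projection is smooth, not cusped.

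For the keypoint statements in the three smooth(ish) models $\mathcal{F}_0,\mathcal{F}_\ve,\mathcal{F}_\ve^+$ I would rely on analyticity: with $\mathcal{C}_1=\mathcal{C}_2=1$ the extremal system is real-analytic in every region where the metric is smooth, so if $\ul{x}(t)\equiv \tilde{\ul{x}}$ on an interval $[t_0,t_1]$ the geodesic must be spatially constant on its entire maximal interval, i.e.\ vertical. In $\mathcal{F}_\ve^+$ one applies this to the open strata $\{\tilde{u}>0\}$ and $\{\tilde{u}<0\}$ separately, and then glues across $\{\tilde{u}=0\}$. For $\mathcal{F}_0^+$, admissible paths are concatenations of horizontal SR arcs (on which $\tilde{u}\geq 0$) and pure in-place rotations; if a minimizer had an interior keypoint at $\tilde{\ul{x}}$, then the SR arcs before and after it lift to curves ending and starting on a translate of the cuspless reachable set $\overline{\gothic{R}}$. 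Using the description recalled in Fig.~\ref{fig:R} and Remark~\ref{rem:globalmingeodesic}, one may transport the in-place rotation towards either endpoint without increasing $\Length_{\mathcal{F}_0^+}$, contradicting the assumed interior keypoint.

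The classification A/B/C builds directly on the geometry of $\overline{\gothic{R}}$ from \cite{duits_association_2013}, visible in Fig.~\ref{fig:R}. In case A the cuspless SR geodesic from $\ul{e}$ to $\bp$ lies in the forward half-space with $\tilde{u}\geq 0$, so it is itself admissible in $\mathcal{F}_0^+$ and, being length-minimizing for $d_{\mathcal{F}_0}$, remains optimal for $d_{\mathcal{F}_0^+}$. In case B, since $\overline{\gothic{R}}\subset\{x\geq 0\}$, no concatenation of a forward SR arc with a terminal in-place rotation can reach a target with $x<0$; the only admissible way out is an initial in-place rotation at $\ul{e}$, which by the previous step is the unique keypoint. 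In case C we have $\bp \in \overline{\gothic{R}}^{c}$ with $x\geq 0$, and the minimizer then ends with a single keypoint at $(x,y)$: the two sub-conditions C1 and C2 come from the explicit parametrization of $\partial \gothic{R}$ in terms of the incomplete elliptic integral $E(z,m)$ of the second kind. For $x\geq 2$ the angular shadow of $\overline{\gothic{R}}$ above $(x,y)$ covers the whole fibre, so C1 is automatic; for $0\leq x < 2$ the shadow is an explicit $|y|$-envelope, recovering exactly the bound stated in C2. The main obstacle is precisely this sub-case C2: it requires the explicit elliptic-integral description of $\partial \gothic{R}$ above the strip $\{0\leq x <2\}$, together with the variational step ruling out that two simultaneous endpoint keypoints could be jointly optimal when a single endpoint keypoint suffices. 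Once these ingredients are in place, each sub-case reduces to a direct comparison of $L^1$-type length functionals.
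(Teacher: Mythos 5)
Your overall architecture — PMP, analyticity of extremals, and the geometry of the cuspless reachable set $\gothic{R}$ — matches the paper's proof, and your treatment of items A, B, C is essentially what the paper does (invoking the half-space property and \cite[Thm.~9]{duits_association_2013} for the elliptic-integral boundary of $\gothic{R}$). Two places, however, deviate substantively and one of them is a genuine gap.

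On the cusp claim for $(\mathbb{M},d_{\mathcal{F}_0})$ you appeal to a ``pendulum-type ODE with elliptic solutions that cross zero transversally.'' That is the right picture but not yet an argument: it does not exclude degenerate orbits whose $\tilde u$ merely touches zero, and it only gives ``generically'' rather than the paper's ``all except straight lines.'' The paper closes this by reparametrizing to spatial arc length ($\mathrm{d}s/\mathrm{d}t = \hat p_1$), under which the momentum ODE becomes the linear hyperbolic system $\hat p_2'=-\hat p_3$, $\hat p_3'=-\hat p_2$; the solutions $\hat p_3(s)$ grow unboundedly in $|s|$ and therefore must reach $|\hat p_3|=1$, hence $\hat p_1=0$, in finite spatial arc length, unless $\hat p_2(0)=\hat p_3(0)=0$, which is exactly the straight-line case. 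This yields the exact exception set your sketch gestures at. Your argument for $\mathcal{F}_\ve$ with $\ve>0$ (``the transverse component is activated when $\tilde u$ passes through zero'') is correct in spirit but tacitly uses the co-adjoint invariant $\hat p_1^2+\hat p_2^2=\text{const}$, which is what the paper invokes explicitly; you should state it.

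The genuine gap is in your no-interior-keypoint argument for $(\mathbb{M},d_{\mathcal{F}_0^+})$. You propose to ``transport the in-place rotation towards either endpoint without increasing $\Length_{\mathcal{F}_0^+}$,'' but this transport is not a length-preserving reparametrization: once you remove the rotation from the intermediate state $(\tilde{\ul x},\ul n_1)\to(\tilde{\ul x},\ul n_2)$ and insert it at a boundary, the two SR arcs now have to solve different two-point boundary value problems in $\mathbb{M}$, and nothing in the structure of $\overline{\gothic{R}}$ cited in Remark~\ref{rem:globalmingeodesic} or Fig.~\ref{fig:R} guarantees the new arcs are no longer than the old ones. You have rearranged the data, not produced a competitor of smaller or equal length, so no contradiction with minimality follows. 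The paper instead uses a local surgery: an interior keypoint of turning angle $\delta>0$ creates a corner in the spatial projection, which can be shortcut by a straight chord joining two nearby points on the arc plus two in-place rotations whose angles add to $\delta$; after a suitable mollification this competitor is admissible and strictly shorter because the chord is strictly shorter than the two arcs it replaces (cf.~\cite{boscain_curve_2014}). That variational shortcut is what your proposal is missing.
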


\begin{figure*}
\centerline{
\includegraphics[width=\hsize]{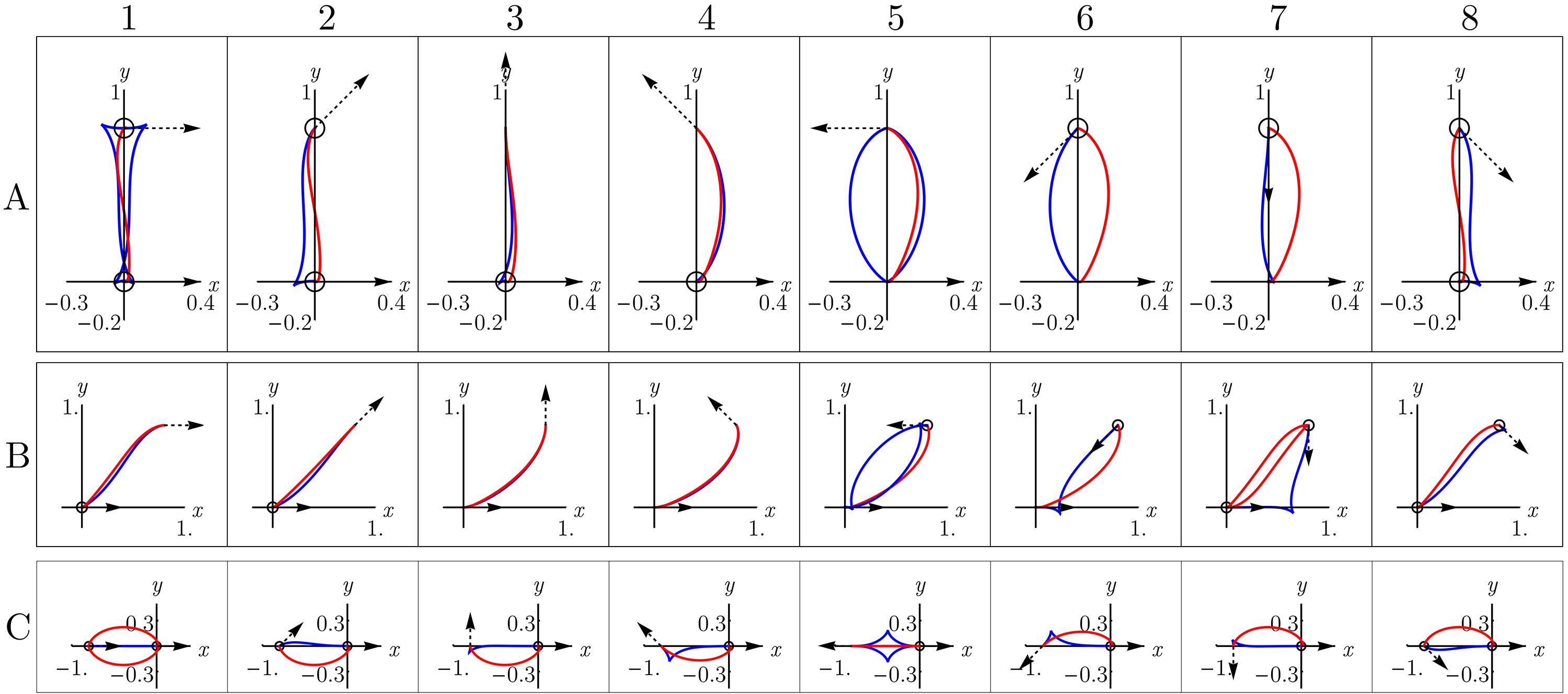}
}
\caption{Shortest paths for $d=2$ using the Finsler metrics $\cF_{0}$ (blue) and $\cF_{0}^+$ (red),
with point source $\bp_S = (0,0,0)$ and varying end conditions.
Row A: $\bp = (0,0.8,\pi n /4)$. Row B: $\bp = (0.8,0.8,\pi n/4)$. Row C: $\bp = (-0.8,0,\pi n/4)$. Here $n = 1, \dots, 8$, corresponding to the columns. When there are two minimizing geodesics, both are drawn. Circles around the begin or end point indicate in-place rotation of the red curve at that point. We see that whenever the blue geodesic has a cusp, the red geodesic has at least one in-place rotation (keypoint). This numerically supports our statements in Theorem~\ref{th:CuspsAndRotations} considering cusps and keypoints. For high accuracy we applied the relatively slow iterative PDE approach \cite{bekkers_pde_2015}
on a $101 \times 101 \times 64$-grid in $\mathbb{M}$ to compute $d_{\cF_{0}}(\bp,\bp_S)$ and $d_{\cF_{0}^+}(\bp,\bp_S)$, see App. \ref{app:iterative}.
\label{fig:JorgSE2}}
\end{figure*}

\begin{remark}
In case A, $\gamma_{+}$ is a minimizing geodesic in $(\mathbb{M},d_{\mathcal{F}_0})$ as well. In case B, $\gamma_{+}$ departs from a cusp. In case C, $\gamma^+$ is a concatenation of a minimizing geodesic in $(\mathbb{M},d_{\mathcal{F}_0})$ and an in-place rotation. For other endpoints $(x,y,\theta)$ for geodesics departing from $\ul{e}$ with $0\leq x < 2$, other than the ones reported in C2 it is not immediately clear what happens, due to \cite[Thm.9]{duits_association_2013}. Also points with $x<0$ may have keypoints at the end as well. See Fig.~\ref{fig:JorgSE2} where various
cases of minimizing geodesics in $(\mathbb{M},d_{\mathcal{F}_0^+})$ are depicted.
\end{remark}


\begin{remark}
It is also interesting to study the effect of $\ve \geq 0$ on the removal (or rather smoothing out in practice) of cusps on \emph{non-optimal} geodesics in $(\mathbb{M},d_{\mathcal{F}_{\ve}})$ and keypoints in $(\mathbb{M},d_{\mathcal{F}_{\ve}^{+}})$ when $\ve$ moves away from $0$.
See Fig.~\ref{fig:smoothout}, where such non-optimal geodesics are obtained via Euler-Lagrange formalism (or equivalently by integration of the canonical equations in the Pontryagin Maximum principle).
\end{remark}

\begin{figure*}
\includegraphics[width=\hsize]{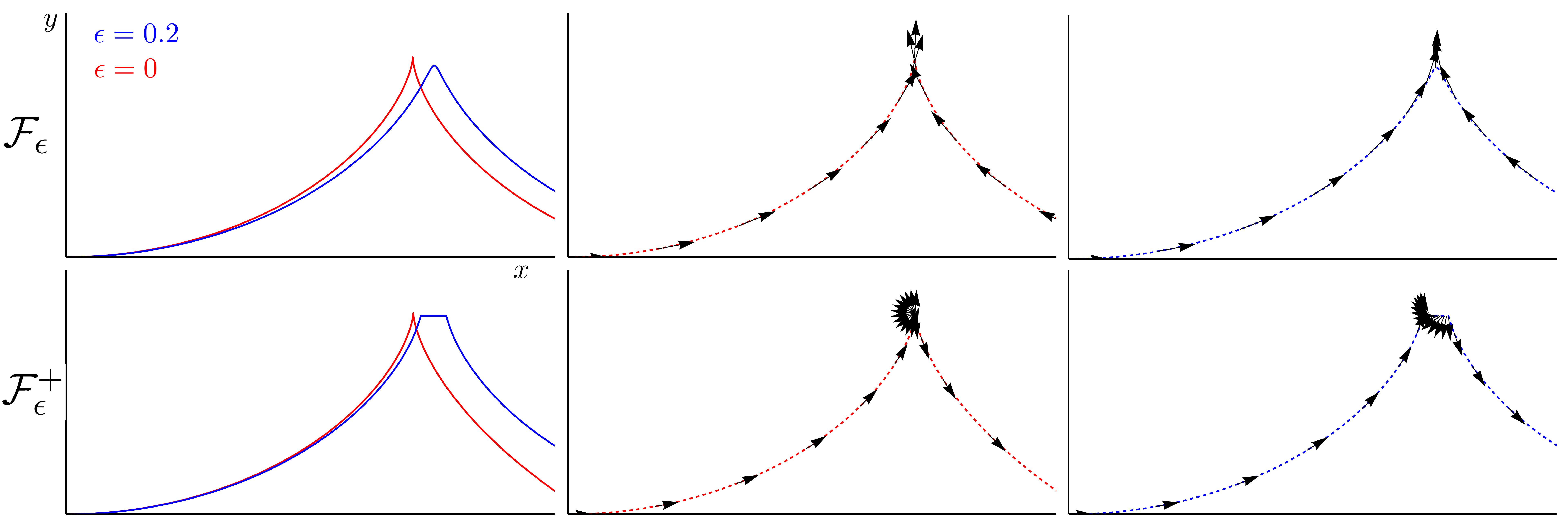}

\caption{Non-optimal geodesics in the special case $\mathcal{C}=1$ and $d=2$. In particular in the case $\cF_0$, keypoints do not appear in the interior of globally optimal curves, only at the end, cf.~Theorem~\ref{th:Controllability}.
We also observe that cusps disappear when $\ve>0$, cf.~Theorem~\ref{th:CuspsAndRotations}. The red curves are spatially very similar, but the part after the cusp/keypoint is traversed with a different orientation. Similar behavior can be observed for the blue curves.
}
\label{fig:smoothout}
\end{figure*}

\subsection{The Eikonal PDE Formalism \label{subsec:eikonal}}

As briefly discussed in Section \ref{sec:introeikonal}, continuous metrics like $\cF_\ve$ and $\cF_\ve^+$ for any $\ve>0$, allow to use the standard theory of viscosity solutions of eikonal PDEs, and thus to design provable and efficient numerical schemes for the computation of distance maps and minimizing \todo{R5.1: terminology} geodesics.
More precisely, consider a continuous Finsler metric $\cF \in C^0(T(\bM),\R^+)$, and define the dual $\cF^*$ on the co-tangent bundle as follows: for all $(\bp, \cot \bp) \in T^*(\bM)$
\begin{equation} \label{dualFinslerfunction}
	\cF^*(\bp, \cot \bp) := \sup_{\dot \bp \in T_\bp \bM\sm \{0\}} \frac{\<\cot \bp, \dot \bp\>}{\cF(\bp, \dot \bp)}.
\end{equation}
The distance map $U = d_\cF(\pSource, \cdot)$ from a given source point $\pSource \in \bM$ is the unique solution, in the sense of viscosity solutions, of the static Hamilton Jacobi equation: $U(\pSource)=0$, and for all $p \in \bM$
\begin{equation}
\label{eq:HJBGeneral}
	\cF^*(\bp, \diff U(\bp))=1.
\end{equation}
Furthermore, if $\gamma$ is a minimizing \todo{R5.1: terminology} geodesic from $\pSource$ to some $\bp \in \bM$, then it obeys the ordinary differential equation (ODE):
\begin{equation}
\label{eq:GeodesicODE}
\left\{\begin{aligned}
	&\dot \gamma(t) = L \ \diff_{\cot \bp} \cF^*(\gamma(t), \diff U(\gamma(t))), \; L := d_\cF(\pSource, \bp) \\
	&\gamma(0) = \bp_S, \quad \gamma(1) = \bp.
	\end{aligned}\right.
\end{equation}
for any $t\in [0,1]$ such that the differentiability of $U$ and $\cF^*$ holds at the required points. The proof of the ODE \eqref{eq:GeodesicODE} is for completeness derived in Proposition \ref{prop:GeodesicODE} of Appendix~\ref{app:Backtracing}, where we also discuss in Remark \ref{rem:LagrangianHamiltonian} the common alternative formalism based on the Hamiltonian.
We denoted by $\diff_{\cot \bp} \cF^*$ the differential of the dual Finsler 
metric $\cF^*$ with respect to the second variable $\cot \bp$, hence $\diff_{\cot \bp} \cF^*(\bp, \cot \bp) \in T^{**}_\bp(\bM) \cong T_\bp(\bM)$ is indeed a tangent vector to $\bM$, for all $(\bp,\cot \bp)\in T^*\bM$.

In the rest of this section, we specialize \eqref{eq:HJBGeneral} and \eqref{eq:GeodesicODE} to the Finsler 
metrics $\cF_\ve$ and $\cF_\ve^+$.
Our first result provides explicit expressions for the dual Finsler 
metrics (required for the eikonal equation).
\begin{proposition}
\label{prop:DualMetric}
	For any $0<\ve\leq 1$, the duals to the approximating Finsler metrics $\cF_\ve$ and $\cF_\ve^+$ are: for all $(\bp,\cot\bp) \in T^*(\bM)$, with $\bp = (\bx,\bn)$ and $\cot \bp = (\cot\bx, \cot\bn)$
	\begin{equation}\label{eq:FEpsPlusStar}
	\begin{split}
		\cF_\ve^*(\bp, \cot\bp)^2 &= (\cC_{2}(\bp))^{-2} \|\cot\bn\|^2 + (\cC_{1}(\bp))^{-2} (|\cot \bx\cdot\bn|^2 \\
			&\hspace*{4em} + \ve^2 \|\cot \bx\wedge \bn\|^2) \\
		\cF_\ve^{+*}(\bp, \cot\bp)^2  &= (\cC_{2}(\bp))^{-2} \|\cot\bn\|^2 + (\cC_{1}(\bp))^{-2} (|\cot \bx\cdot\bn|^2  \\
		 	&\hspace*{4em} + \ve^2 \|\cot \bx\wedge \bn\|^2  - (1-\ve^2) (\cot \bx \cdot \bn)_-^2) \\
		&= (\cC_{2}(\bp))^{-2} \|\cot\bn\|^2 + (\cC_{1}(\bp))^{-2} ((\cot \bx\cdot\bn)_+^2  \\
			&\hspace*{4em} + \ve^2(\cot \bx \cdot \bn)_-^2 + \ve^2 \|\cot \bx\wedge \bn\|^2)
	\end{split}
	\end{equation}
\end{proposition}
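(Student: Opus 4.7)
My plan is to exploit the fact that both $\cF_\ve$ and $\cF_\ve^+$ are separable, positively $2$-homogeneous functions when the tangent vector is decomposed into three mutually orthogonal components.

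First, for each fixed $\bp=(\bx,\bn)$, I would introduce the orthogonal decomposition $\dot\bx=u\bn+\bw$, where $u:=\dot\bx\cdot\bn$ and $\bw:=\dot\bx-(\dot\bx\cdot\bn)\bn$, so that $\|\bw\|=\|\dot\bx\wedge\bn\|$. Writing $\hat\bx$ analogously as $\hat u\bn+\hat\bw$ with $\hat u:=\hat\bx\cdot\bn$ and $\hat\bw:=\hat\bx-(\hat\bx\cdot\bn)\bn$, the canonical pairing decouples:
\begin{equation*}
\langle\cot\bp,\dot\bp\rangle = \hat u\, u + \hat\bw\cdot\bw + \cot\bn\cdot\dot\bn.
\end{equation*}
In these coordinates $\cF_\ve(\bp,\dot\bp)^2$ is a diagonal quadratic form $\cC_1^2 u^2 + \cC_1^2\ve^{-2}\|\bw\|^2 + \cC_2^2\|\dot\bn\|^2$, while $\cF_\ve^+(\bp,\dot\bp)^2$ differs only by replacing $\cC_1^2 u^2$ with the two-sided quadratic $\cC_1^2(u_+^2+\ve^{-2}u_-^2)$.

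Second, starting from the dual definition \eqref{dualFinslerfunction}, I would rewrite $\cF^*(\bp,\cot\bp)^2 = \sup\{\langle\cot\bp,\dot\bp\rangle^2 \mid \cF(\bp,\dot\bp)^2\leq 1\}$ and apply a Lagrange multiplier to this constrained quadratic maximization in the variables $(u,\bw,\dot\bn)$. For the Riemannian case $\cF_\ve$, this is classical: the dual of a diagonal quadratic form is the quadratic form with reciprocal coefficients, which immediately yields $\cC_2^{-2}\|\cot\bn\|^2+\cC_1^{-2}(\hat u^2 + \ve^2\|\hat\bw\|^2)$, proving the first identity. For $\cF_\ve^+$ the perpendicular and angular components are handled identically, and it only remains to compute the scalar supremum
\begin{equation*}
\sup_{c_+ u_+^2 + c_- u_-^2 \leq s}\hat u\, u,\qquad c_+=\cC_1^2,\ c_-=\cC_1^2\ve^{-2}.
\end{equation*}
Splitting on the sign of $\hat u$ (the optimizer $u$ has the same sign as $\hat u$ by monotonicity) gives a closed-form contribution $c_+^{-1}\hat u_+^2+c_-^{-1}\hat u_-^2$ times $s$. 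Recombining via the Lagrange multiplier with the other two blocks and taking the square root yields
\begin{equation*}
\cF_\ve^{+*}(\bp,\cot\bp)^2 = \cC_2^{-2}\|\cot\bn\|^2 + \cC_1^{-2}\bigl((\hat\bx\cdot\bn)_+^2 + \ve^2(\hat\bx\cdot\bn)_-^2 + \ve^2\|\hat\bx\wedge\bn\|^2\bigr),
\end{equation*}
which is the second expression in \eqref{eq:FEpsPlusStar}.

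Third, I would derive the equivalence with the first expression in \eqref{eq:FEpsPlusStar} via the elementary identity $a^2 = a_+^2 + a_-^2$ (valid because $a_+a_-=0$), applied to $a=\hat\bx\cdot\bn$. This yields $(\hat\bx\cdot\bn)_+^2 + \ve^2(\hat\bx\cdot\bn)_-^2 = |\hat\bx\cdot\bn|^2 - (1-\ve^2)(\hat\bx\cdot\bn)_-^2$, completing the proof.

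The main obstacle is the piecewise structure introduced by $(\dot\bx\cdot\bn)_-$, which breaks Riemannian duality; however, since this piece enters only as a one-dimensional scalar term that is separable from the perpendicular spatial and angular components, the non-smoothness is confined to a scalar Lagrange calculation whose optimum can be written down explicitly by a two-case analysis on $\text{sign}(\hat u)$. No global convex analysis machinery is needed beyond this.
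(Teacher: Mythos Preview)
Your proposal is correct and follows essentially the same route as the paper. The paper packages the computation as a general lemma on dual norms of the form $F_{M,\bw}(\bv)=\sqrt{(M\bv,\bv)+(\bw,\bv)_-^2}$, proves it by reducing (via an affine change of variables) to the diagonal case $M=\Id$, $\bw$ along the first axis, and then uses exactly your separability-plus-1D argument; you bypass the general lemma by working directly in the adapted orthogonal frame $(u,\bw,\dot\bn)$, which the paper itself notes (in Remark~\ref{rem:dualnormmovingframe}) is the simpler way to carry out the same calculation. One cosmetic point: your rewriting $\cF^{*2}=\sup\{\langle\cot\bp,\dot\bp\rangle^2:\cF^2\le1\}$ is only literally valid in the symmetric case; for $\cF_\ve^+$ you correctly switch to the linear supremum $\sup\hat u\,u$, so the argument goes through, but you should phrase the reduction via the block-wise identity $F^{*2}=F_1^{*2}+F_2^{*2}+F_3^{*2}$ for separable norms rather than via the squared pairing.
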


In order to relate the Finslerian HJB equation \eqref{eq:HJBGeneral} and backtracking equation \eqref{eq:GeodesicODE} to some more classical Riemannian counterparts, we introduce two Riemannian metric tensor fields on $\bM$.
The first is defined as the polarization of the norm $\cF_\ve(\bp,\cdot)$ 
\begin{equation} \label{importantmetric}
\begin{split}
\mathcal{G}_{\bp ; \ve}(\dot{\bp},\dot{\bp}) &= |\mathcal{F}_{\ve}(\bp,\dot{\bp})|^2  \\&= \mathcal{C}_{1}^{2}(\bp) ((\dot \bx \cdot \bn)^2+\ve^{-2}\|\dot\bx \wedge \bn\|^2) \\
&\hspace*{7.35em}+ \mathcal{C}_{2}^2(\bp) \|\dot{\ul{n}}\|^2\ ,
\end{split}
\end{equation}
where $\dot{\bp}=(\dot{\ul{x}},\dot{\ul{n}})$,
and then one can also rely on gradient fields $\bp \mapsto \mathcal{G}_{\bp; \ve}^{-1}{\rm d}U(\bp)$ relative to this metric tensor. This has benefits if it comes to geometric understanding of the eikonal equation and its tracking. Even in the analysis of the non-symmetric case --where one does not have a single metric tensor--
this notion plays a role, as we will see in the next main theorem.
To this end, in the non-symmetric case, we shall rely on a second spatially isotropic metric tensor given by:
\begin{equation} \label{Gtilde}
\widetilde{\mathcal{G}}_{\bp; \ve}(\dot{\bp},\dot{\bp}):= \mathcal{C}_{1}^{2}(\bp) \, \ve^{-2}\, \|\dot{\ul{x}}\|^2 + \mathcal{C}_{2}^2(\bp) \|\dot{\ul{n}}\|^2.
\end{equation}

We denote by $\nabla_{\bS^{d-1}}$ the gradient operator on $\bS^{d-1}$ with respect to the inner product induced by the embedding $\bS^{d-1} \subset \bR^d$, and by $\nabla_{\bR^d}$ the canonical gradient operator on $\bR^d$.

\begin{corollary} \label{cor:eik}
Let $\ve \geq 0$. 
Then the eikonal PDE (\ref{eqdef:EikonalPDE}) for the case $(\mathbb{M},\cF_\ve)$ takes the form
\begin{equation*}
\begin{array}{c}
\sqrt{
	\textstyle{\frac{\|\nabla_{\bS^{d\!-\!1}}U(\bp)\|^2}{\mathcal{C}^{2}_{2}(\bp)}} +
	\textstyle{\frac{\ve^{2}\|\nabla_{\R^{d}}U(\bp)\|^2 + (1-\ve^2) |\, \ul{n} \cdot \nabla_{\R^{d}}U(\bp) \,|^2}{\mathcal{C}^{2}_{1}(\bp)}}
} = 1, \\
\Leftrightarrow \\
\left.\mathcal{G}_{\bp ; \ve}\right|_{\bp}\left(\,\mathcal{G}_{\bp ; \ve}^{-1} {\rm d}U(\bp)\,, \mathcal{G}_{\bp ; \ve}^{-1} {\rm d}U(\bp)\, \right)=1.
\end{array}
\end{equation*}
The eikonal PDE (\ref{eqdef:EikonalPDE}) for the case $(\mathbb{M},\cF_\ve^+)$ now takes the explicit form:
\begin{equation*}
\begin{array}{c}
\sqrt{
\begin{split}
&\textstyle{\frac{\|\nabla_{\bS^{d\!-\!1}}U^+(\bp)\|^2}{\mathcal{C}^{2}_{2}(\bp)}} + \\
&\textstyle{\hspace*{2em}\frac{\ve^{2}\|\nabla_{\R^{d}}U^+(\bp)\|^2 + (1-\ve^2) |\, (\,\ul{n} \cdot \nabla_{\R^{d}}U^+(\bp)\,)_+\, \,|^2}{\mathcal{C}^{2}_{1}(\bp)}}
\end{split}
}=1 \\[5pt]
\Leftrightarrow \\[5pt]
\left\{
\begin{array}{l}
\left.\mathcal{G}_{\bp ; \ve}\right|_{\bp}\left(\,\mathcal{G}_{\bp ; \ve}^{-1} {\rm d}U^+(\bp)\,, \mathcal{G}_{\bp ; \ve}^{-1} {\rm d}U^+(\bp)\, \right)=1, \\
	\hspace*{3em} \textrm{ if }\bp \in \mathbb{M}_{+}:=\{\bp \in \mathbb{M} \;|\;  \langle {\rm d}U^+(\bp), \ul{n} \rangle > 0 \}, \\[6pt]
\left.\widetilde{\mathcal{G}}_{\bp ; \ve}\right|_{\bp}\left(\,\widetilde{\mathcal{G}}_{\bp ; \ve}^{-1} {\rm d}U^+(\bp)\,, \widetilde{\mathcal{G}}_{\bp ; \ve}^{-1} {\rm d}U^+(\bp) \, \right)=1, \\
	\hspace*{3em} \textrm{ if }\bp \in \mathbb{M}_- := \{\bp \in \mathbb{M} \;|\;  \langle {\rm d}U^+(\bp), \ul{n} \rangle < 0 \}.
\end{array}
\right.
\end{array}
\end{equation*}
for those $\ul{p} \in \mathbb{M}_+ \cup \mathbb{M}_{-}$ where $U^{+}$ is differentiable\footnote{On $\partial M_{\pm}$ distance function $U^+$ is not differentiable}.
\end{corollary}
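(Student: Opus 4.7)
The plan is to obtain both eikonal PDEs by direct substitution of the dual metric formulas from Proposition~\ref{prop:DualMetric} into \eqref{eq:HJBGeneral}, followed by an algebraic rearrangement that isolates the orthogonal decomposition of $\nabla_{\R^{d}}U$ relative to $\bn$. First I would fix $\bp=(\bx,\bn)$ and identify $dU(\bp) = (\hat\bx, \hat\bn)$ where, under the canonical musical isomorphisms on $\R^{d}$ and on $\bS^{d-1}\subset\R^{d}$, the spatial component corresponds to $\nabla_{\R^{d}}U(\bp)$ and the angular component to $\nabla_{\bS^{d-1}}U(\bp)$. The Pythagorean identity $\|\hat\bx\|^{2} = |\hat\bx\cdot\bn|^{2} + \|\hat\bx\wedge\bn\|^{2}$ then lets me rewrite the parenthetical term in the first line of \eqref{eq:FEpsPlusStar} as $\ve^{2}\|\nabla_{\R^{d}}U\|^{2}+(1-\ve^{2})|\bn\cdot\nabla_{\R^{d}}U|^{2}$, which upon setting $\cF_{\ve}^{*}(\bp,dU)=1$ yields the first displayed formula exactly.

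For the symmetric case, the equivalence with the metric tensor form is the standard duality identity $\cF_{\ve}(\bp,\dot\bp)^{2}=\cG_{\bp;\ve}(\dot\bp,\dot\bp)$ combined with $\cG^{*}_{\bp;\ve}(dU,dU) = \cG_{\bp;\ve}(\cG_{\bp;\ve}^{-1}dU,\cG_{\bp;\ve}^{-1}dU)$. One checks by inspection of \eqref{importantmetric} that the right-hand side reproduces the explicit scalar expression just derived, so the two formulations coincide.

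For the asymmetric case I would start from the second form of \eqref{eq:FEpsPlusStar}, namely $\cF_{\ve}^{+*}(\bp,\cot\bp)^{2} = \cC_{2}^{-2}\|\cot\bn\|^{2}+\cC_{1}^{-2}\bigl((\cot\bx\cdot\bn)_{+}^{2}+\ve^{2}(\cot\bx\cdot\bn)_{-}^{2}+\ve^{2}\|\cot\bx\wedge\bn\|^{2}\bigr)$. Substituting $\cot\bp = dU^{+}(\bp)$ and using $a^{2}=a_{+}^{2}+a_{-}^{2}$ with $a_{+}a_{-}=0$ for $a=\bn\cdot\nabla_{\R^{d}}U^{+}$, I can write the sum as $\ve^{2}\|\nabla_{\R^{d}}U^{+}\|^{2}+(1-\ve^{2})\bigl((\bn\cdot\nabla_{\R^{d}}U^{+})_{+}\bigr)^{2}$, giving the stated scalar eikonal PDE. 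To obtain the piecewise metric-tensor version, I would split on $\mathbb{M}_{\pm}$: on $\mathbb{M}_{+}$ the minus part vanishes, so $\cF_{\ve}^{+*}(\bp,dU^{+})^{2} = \cF_{\ve}^{*}(\bp,dU^{+})^{2}$ and the PDE reduces to the same $\cG_{\bp;\ve}$-form as in the symmetric case; on $\mathbb{M}_{-}$ the plus part vanishes, and the remaining sum $\ve^{2}(|\bn\cdot\nabla_{\R^{d}}U^{+}|^{2}+\|\nabla_{\R^{d}}U^{+}\wedge\bn\|^{2})=\ve^{2}\|\nabla_{\R^{d}}U^{+}\|^{2}$ is precisely the dual of the isotropic tensor $\widetilde{\cG}_{\bp;\ve}$ defined in \eqref{Gtilde}.

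The only genuinely delicate point is the sign bookkeeping in the $\pm$ decomposition and the care needed on the boundary $\partial\mathbb{M}_{\pm}=\{\bn\cdot\nabla_{\R^{d}}U^{+}=0\}$, where $\cF_{\ve}^{+*}$ remains continuous but its squared expression is not smooth; this is precisely why $U^{+}$ fails to be differentiable there, as noted in the footnote. Everywhere else the identity $\cF_{\ve}^{+*}(\bp,dU^{+})=1$ holds in the classical sense by the general viscosity solution theory recalled around \eqref{eq:HJBGeneral}, and the two piecewise metric-tensor equations are then just rewriting this single scalar identity on the open sets $\mathbb{M}_{\pm}$.
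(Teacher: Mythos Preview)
Your proposal is correct and follows the same approach as the paper: the paper simply states that Corollary~\ref{cor:eik} ``follows by setting the momentum covector $\hat{\bp}={\rm d}U(\bp)$ equal to the derivative of the value function evaluated at $\bp$,'' i.e., by direct substitution of the dual metric formulas of Proposition~\ref{prop:DualMetric} into the eikonal equation~\eqref{eq:HJBGeneral}. You carry out exactly this substitution, supplying the explicit algebra (the Pythagorean rewriting, the $a^{2}=a_{+}^{2}+a_{-}^{2}$ split, and the case distinction on $\mathbb{M}_{\pm}$) that the paper leaves implicit.
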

The proof of Proposition~\ref{prop:DualMetric} and Corollary~\ref{cor:eik} can be found in Section~\ref{ch:proofbacktracking}. \\
\\
We finally specialize the geodesic ODE \eqref{eq:GeodesicODE} to the models of interest. Note that for the model $(\bbM,d_{\cF_\ve^+})$, the backtracking switches between qualitatively distinct modes, respectively almost sub-Riemannian and almost purely angular, in the spirit of Theorem \ref{th:CuspsAndRotations}.
Given $\ve > 0$ and $\bn \in \bS^{d-1}$ let $D_\bn^\ve$ denote the $d \times d$ symmetric positive definite matrix with eigenvalue $1$ in the direction $\bn$, and eigenvalue $\ve^2$ in the orthogonal directions :
\begin{equation} \label{Dnve}
D_\bn^\ve := \bn \otimes \bn + \ve^2 (\Id-\bn\otimes\bn).
\end{equation}

\begin{theorem}[Backtracking]
\label{th:Backtracing}
Let $0< \ve <1$. Let $\pSource\in \bM$ be a source point.
Let $U(\bp):= d_{\mathcal{F}_{\ve}}(\bp,\bp_s)$, $U^+(\bp):=d_{\mathcal{F}_{\ve}^+}(\bp,\bp_s)$ be distance maps from $\bp_{s}$, w.r.t. the
Finsler metric $\mathcal{F}_{\ve}$, and $\mathcal{F}_{\ve}^+$.
Let $\gamma, \gamma^+:[0,1] \to \mathbb{M}$ be normalized geodesics of length $L$ starting at $\bp_s$ in $(\mathbb{M},d_{\mathcal{F}_{\ve}})$ resp.
$(\mathbb{M},d_{\mathcal{F}_{\ve}^+})$. Let time $t \in [0,1]$.

For the Riemannian approximation paths of the Reeds-Shepp car we have, provided that $U$ is differentiable at $\gamma(t)=(\ul{x}(t),\ul{n}(t))$, that
\begin{equation} \label{btsimple}
\begin{array}{c}
\dot{\gamma}(t) = L \, \mathcal{G}^{-1}_{\gamma(t); \ve} {\rm d}U(\gamma(t)) \\ \Leftrightarrow \\
\left\{
\begin{array}{ll}
\dot \bn(t) &= L\, \cC_{2}(\gamma(t))^{-1}\; \nabla_{\bS^{d-1}} U(\gamma(t)), \\
		\dot \bx(t) &= L\, \cC_{1}(\gamma(t))^{-1}\;
		D_{\bn(t)}^\ve \nabla_{\bR^d} U(\gamma(t)).
\end{array}
\right.
\end{array}
\end{equation}

For the approximation paths of the car without reverse gear we have, provided that $U^+$ is differentiable at $\gamma^+(t) = (\bx^+(t),\bn^+(t))$, that
\begin{equation} \label{btabstract}
\dot{\gamma}^+(t) = L \left\{
\begin{array}{ll}
\mathcal{G}^{-1}_{\gamma^+(t); \ve} {\rm d}U^+(\gamma^+(t))  &\textrm{if }\gamma^+(t) \in \mathbb{M}_+,\\
\widetilde{\mathcal{G}}^{-1}_{\gamma^+(t); \ve} {\rm d}U^+(\gamma^+(t))  &\textrm{if }\gamma^+(t) \in \mathbb{M}_-,
\end{array}
\right.
\end{equation}
with $\widetilde{\mathcal{G}}_{\bp; \ve}(\dot{\bp},\dot{\bp})$ given by (\ref{Gtilde}), with disjoint Riemannian manifold splitting $\mathbb{M}=\mathbb{M}_{+} \cup \mathbb{M}_{-} \cup \partial \mathbb{M}_{\pm}$. Manifold $\mathbb{M}_{+}$ is equipped with metric tensor $\mathcal{G}_{\ve}$, $\mathbb{M}_{-}$ is equipped with metric tensor $\widetilde{\mathcal{G}}_{\ve}$ and
\begin{equation}\label{transitionsurface}
\partial\mathbb{M}_{\pm} := \overline{\bbM_+} \setminus \bbM_+ = \overline{\bbM_-} \setminus \bbM_-
\end{equation}
denotes the transition surface (surface of keypoints).
\end{theorem}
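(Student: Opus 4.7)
The plan is to derive \eqref{btsimple}--\eqref{btabstract} by specializing the general geodesic ODE \eqref{eq:GeodesicODE} (proved in Proposition~\ref{prop:GeodesicODE} of Appendix~\ref{app:Backtracing}) to the Finsler metrics $\mathcal{F}_\ve$ and $\mathcal{F}_\ve^+$, using the explicit dual metrics supplied by Proposition~\ref{prop:DualMetric}. Throughout, the eikonal identity $\mathcal{F}^*(\gamma,dU) = 1$ along a minimizer reduces the chain rule $d_{\cot\bp}\mathcal{F}^* = (2\mathcal{F}^*)^{-1}d_{\cot\bp}(\mathcal{F}^*)^2$ to $\tfrac{1}{2}d_{\cot\bp}(\mathcal{F}^*)^2$, so the task reduces to differentiating the quadratic dual cometric in its second argument and recognising the result as the sharp of $dU$ with respect to the appropriate Riemannian tensor.

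For the Riemannian case $\mathcal{F}_\ve$, the metric tensor $\mathcal{G}_{\bp;\ve}$ from \eqref{importantmetric} splits into an angular block $\mathcal{C}_2^2(\bp)\Id$ and a spatial block $\mathcal{C}_1^2(\bp)(\bn\otimes\bn + \ve^{-2}(\Id - \bn\otimes\bn))$. The latter has eigenvalue $\mathcal{C}_1^2$ along $\bn$ and $\mathcal{C}_1^2\ve^{-2}$ on its orthogonal complement, so its inverse acts as $\mathcal{C}_1^{-2}D_\bn^\ve$ with $D_\bn^\ve$ given by \eqref{Dnve}. A direct comparison with $(\mathcal{F}_\ve^*)^2$ in Proposition~\ref{prop:DualMetric} then identifies $(\mathcal{F}_\ve^*)^2(\bp,\cdot)$ with the cometric $\mathcal{G}_{\bp;\ve}^{-1}(\cdot,\cdot)$ on $T^*_\bp\bbM$, whence $\tfrac{1}{2}d_{\cot\bp}(\mathcal{F}_\ve^*)^2(\bp,\cot\bp) = \mathcal{G}_{\bp;\ve}^{-1}\cot\bp$. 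Substitution in \eqref{eq:GeodesicODE} gives the intrinsic form of \eqref{btsimple}, and the componentwise expressions follow by reading off the two blocks.

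For the asymmetric Finsler case $\mathcal{F}_\ve^+$ the argument is analogous but splits along the sign of $\cot\bx\cdot\bn$, using the second form of $(\mathcal{F}_\ve^{+*})^2$ in \eqref{eq:FEpsPlusStar}. On $\{\cot\bx\cdot\bn > 0\}$ the negative-part term vanishes and $(\cot\bx\cdot\bn)_+^2 = (\cot\bx\cdot\bn)^2$, so $\mathcal{F}_\ve^{+*}$ coincides locally with $\mathcal{F}_\ve^*$ and the preceding step yields $\dot{\gamma}^+ = L\,\mathcal{G}_{\gamma^+;\ve}^{-1}dU^+$. On $\{\cot\bx\cdot\bn < 0\}$ the positive-part term vanishes and the spatial piece collapses to $\mathcal{C}_1^{-2}\ve^2((\cot\bx\cdot\bn)^2 + \|\cot\bx\wedge\bn\|^2) = \mathcal{C}_1^{-2}\ve^2\|\cot\bx\|^2$, which is precisely the cometric of $\widetilde{\mathcal{G}}_{\bp;\ve}$ from \eqref{Gtilde}, so differentiation delivers $\dot{\gamma}^+ = L\,\widetilde{\mathcal{G}}_{\gamma^+;\ve}^{-1}dU^+$. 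To match these two regimes with the definition of $\bbM_\pm$ through the sign of $\langle dU^+,\bn\rangle$, note that on $\bbM_+$ the eigenvalue-$1$ direction $\bn$ of $D_\bn^\ve$ gives $\dot\bx\cdot\bn = L\mathcal{C}_1^{-2}\langle\nabla_{\bR^d}U^+,\bn\rangle$, aligning the forward-motion admissibility constraint $\dot\bx\cdot\bn\geq 0$ with $\langle dU^+,\bn\rangle > 0$, and symmetrically on $\bbM_-$.

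The main obstacle is the transition surface $\partial\bbM_\pm$ of \eqref{transitionsurface}: there $(\mathcal{F}_\ve^{+*})^2$ ceases to be $C^1$ in its cotangent argument and $U^+$ generally loses differentiability, so the pointwise ODE cannot hold in a naive sense. The theorem accommodates this by conditioning on differentiability of $U^+$ at $\gamma^+(t)$, after which \eqref{btabstract} holds on each open arc of $\gamma^+$ contained in $\bbM_+$ or $\bbM_-$. Theorem~\ref{th:CuspsAndRotations} guarantees that crossings of $\partial\bbM_\pm$ occur only at isolated keypoints, so the global minimizer is recovered by concatenating the pieces across these keypoints, with the purely angular behaviour at a keypoint matching the second branch of \eqref{btabstract} in the limit where the spatial velocity $\dot\bx$ vanishes.
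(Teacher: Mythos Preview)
Your proposal is correct and follows essentially the same route as the paper: invoke the general geodesic ODE \eqref{eq:GeodesicODE} from Proposition~\ref{prop:GeodesicODE}, reduce $d_{\cot\bp}\cF^*$ to $\tfrac{1}{2}d_{\cot\bp}(\cF^*)^2$ via the eikonal equation (the paper phrases this as differentiating the Hamiltonian, cf.\ \eqref{eq:HamiltonBacktracing}), and then compute that derivative explicitly from Proposition~\ref{prop:DualMetric}, with the sign of $\cot\bx\cdot\bn$ governing the case split for $\cF_\ve^+$. Your identification of the two regimes with the cometrics $\cG_{\bp;\ve}^{-1}$ and $\widetilde{\cG}_{\bp;\ve}^{-1}$ is exactly the content of the paper's computation \eqref{eq:dhatxFstar}.

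One small caution on your final paragraph: the appeal to Theorem~\ref{th:CuspsAndRotations} to control crossings of $\partial\bbM_\pm$ is only valid in the uniform-cost $d=2$ setting, so it does not cover the general statement of Theorem~\ref{th:Backtracing}. This does not damage your proof, however, since the theorem already conditions on differentiability of $U^+$ at $\gamma^+(t)$ and therefore makes no claim on $\partial\bbM_\pm$; the paper's own proof simply omits that discussion.
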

\begin{remark}
The general abstract formula (\ref{btabstract}) reflects that the backtracking in $(\mathbb{M},\mathcal{F}^{+})$ is a combined gradient descent flow on
the distance map $U^+$ on a splitting of $\mathbb{M}$ into two (symmetric) Riemannian manifolds.
Its explicit form (likewise (\ref{btsimple})) is
\begin{equation} \label{simple2}
\left\{
\begin{array}{ll}
\dot \bn^+(t) &= L\, \cC_{2}(\gamma^+(t))^{-1}\; \nabla_{\bS^{d-1}} U^+(\gamma^+(t)), \vspace{1em}\\
		\dot \bx^+(t) &=
L \left\{
\begin{array}{lll}
 \cC_{1}(\gamma^+(t))^{-1}\; D_{\bn(t)}^\ve \nabla_{\bR^d} U^+(\gamma^+(t))  \vspace{.2em}
 	\\ \qquad \qquad \textrm{ if }\gamma^+(t) \in \mathbb{M}_+, \vspace{1em}\\
 \ve^2 \,  \cC_{1}(\gamma^+(t))^{-1}\; \nabla_{\bR^d} U^+(\gamma^+(t))  \vspace{.2em}
	\\ \qquad \qquad \textrm{ if }\gamma^+(t) \in \mathbb{M}_-,\\
\end{array}
\right.
\end{array}
\right.
\end{equation}
Note that for the (less useful) isotropic case $\ve=1$, 
$\mathcal{F}_{1}$ and $\mathcal{F}_{1}^+$ coincide and geodesics consist of straight lines $\ul{x}(\cdot)$ in $\R^{d}$ and great circles $\ul{n}(\cdot)$ in $\bS^{d}$ that do not influence each other.
\end{remark}

\begin{remark}\label{rem:nonsmoothpoint}
In Theorem \ref{th:Backtracing}, we assumed distance maps $U$ and $U^+$ to be differentiable along the path, which is not always the case. In points where the distance map is not differentiable, one can take any sub-gradient in the sub-differential $\partial U(\ul{p})$ in order to identify Maxwell points (and Maxwell strata). In particular, in SR geometry, the set of points where the squared distance function $(d_{\cF_0}(\cdot,e))^2$ is smooth is open and dense in any compact subset of $\bbM$, see \cite[Thm. 11.15]{agrachev_introduction_2016}. The points where it is non-smooth are rare and meaningful: they are either first Maxwell points, conjugate points or abnormal points. The last type does not appear here, because we have a 2-bracket generating distribution, see e.g. \cite[Remark 4]{duits_sub-riemannian_2016} and \cite[Ch. 20.5.1.]{agrachev_introduction_2016}. At points in the closure of the first Maxwell set, two geodesically equidistant wavefronts collide for the first time, see for example \cite[Fig.3, Thm 3.2]{bekkers_pde_2015} for the case $d=2$ and $\mathcal{C}=\mathcal{C}_{1}=\mathcal{C}_{2}=1$. See also Fig.~\ref{fig:JorgSE2}, where for some end conditions 2 optimally back-tracked geodesics end with the same length in such a first Maxwell point. The conjugate points are points where local optimality is lost, for a precise definition see e.g. \cite[Def. 8.43]{agrachev_introduction_2016}.
\end{remark}

\begin{remark}\label{rem:continuousfitonlyifepsilonis0}
Recall the convergence result from Theorem~\ref{th:ReedSheppCV}, and the non-local-controllability for the model $(\mathbb{M},d_{\mathcal{F}_{0}^+})$.
From this we see that the convergence holds pointwise but not uniformly (otherwise the limit distance $d_{\mathcal{F}_0^+}$ was continuous).
Nevertheless the shortest \todo{R5.1: terminology} paths converge strongly as $\ve \downarrow 0$, and we see that the spatial velocity tends to $0$ in (\ref{simple2}) if $\ve \downarrow 0$ if $\gamma_{\ve}^*(t) \in \mathbb{M}_{-}$. In the SR case $\ve=0$, the gradient flows themselves fit continuously and the interface $\partial \bM_{\pm}$ is reached with $\dot{\bx} \cdot \bn = 0$ (and $\dot{\bx} = 0$).

\end{remark}

Theorem \ref{th:Backtracing} can be extended to the SR case:
\begin{corollary}[SR Backtracking]
Let the cost $\cC_1, \cC_2$ be smooth, let the source $\bp_S \in \bbM$ and $\bp \neq \bp_S \in \bbM$ be such that they can be connected by a unique smooth minimizer $\gamma_\ve^*$ in $(\bbM,\cF_\ve)$ and $\gamma_0^*$ in $(\bbM, \cF_0)$, such that $\gamma_\ve^*(t)$ is not a conjugate point for all $t \in [0,1]$ and all sufficiently small $\ve >0$, say $\ve < \ve_0$, for some $\ve_0 > 0$.
Then defining $U_0: \bq \in \bM \mapsto d_{\cF_\ve}(\bp_s,\bq)$ one has

\[
\dot{\gamma}^*_0(t) = U_0(\bp)\mathcal{G}_{\gamma^*_0(t) ; 0}^{-1} \, {\rm d}U_{0}(\gamma_0^*(t)), \qquad t \in [0,1],
\]
assuming $U_0$ is differentiable at $\gamma_0^*(t)$. In addition $U_0$ satisfies the SR eikonal equation:

\begin{equation*}
\sqrt{\mathcal{G}_{\bp;0}\left(\mathcal{G}_{\bp;0}^{-1} {\rm d}U_0(\bp),
\mathcal{G}_{\bp;0}^{-1} {\rm d}U_0(\bp)\right)} =1.
\end{equation*}

\end{corollary}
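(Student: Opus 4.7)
The plan is to obtain the sub-Riemannian backtracking identity by passing to the limit $\ve \downarrow 0$ in the Riemannian identity from Theorem~\ref{th:Backtracing}, exploiting Theorem~\ref{th:ReedSheppCV} and the non-conjugacy hypothesis. Denote $U_\ve(\bq) := d_{\cF_\ve}(\bp_S,\bq)$, $U_0(\bq) := d_{\cF_0}(\bp_S,\bq)$, and $L_\ve:=U_\ve(\bp)$. For each $0 < \ve < \ve_0$ the costs are smooth and $\gamma_\ve^*(t)$ is non-conjugate, so the Riemannian exponential at $\bp_S$ associated with $\mathcal{G}_{\cdot;\ve}$ is a local diffeomorphism along the initial covector determining $\gamma_\ve^*$; it follows that $U_\ve$ is $C^\infty$ on an open neighborhood of $\gamma_\ve^*((0,1])$, and Theorem~\ref{th:Backtracing} applies to give
\[
\dot{\gamma}_\ve^*(t) \;=\; L_\ve\,\mathcal{G}_{\gamma_\ve^*(t);\ve}^{-1}\,\diff U_\ve(\gamma_\ve^*(t)), \qquad t\in[0,1].
\]

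\textbf{Passage to the limit.} From Theorem~\ref{th:ReedSheppCV} we have $\gamma_\ve^*\to\gamma_0^*$ uniformly on $[0,1]$ and $L_\ve\to L_0$. The tensor field $(\bp,\ve)\mapsto\mathcal{G}_{\bp;\ve}^{-1}$ is jointly continuous and converges to $\mathcal{G}_{\bp;0}^{-1}$ because $D_\bn^\ve\to\bn\otimes\bn$ as $\ve\downarrow 0$. The only remaining quantity to control is $\diff U_\ve(\gamma_\ve^*(t))$. I would parametrise each $\gamma_\ve^*$ by its initial momentum $\cot\bp_\ve\in T_{\bp_S}^*\bbM$ via the Hamiltonian flow of $H_\ve := \tfrac{1}{2}(\cF_\ve^*)^2$, then use uniqueness of the SR minimiser together with compactness of bounded initial-momentum sets to force $\cot\bp_\ve\to\cot\bp_0$. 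Smooth dependence of Hamiltonian flows on parameters, combined with the uniform-in-$\ve$ non-conjugacy hypothesis, permits the Inverse Function Theorem to be applied on a $\ve$-independent tubular neighborhood of $\gamma_0^*((0,1])$, yielding $U_\ve\to U_0$ in $C^1$ on this neighborhood. Consequently $U_0$ is differentiable at $\gamma_0^*(t)$ and $\diff U_\ve(\gamma_\ve^*(t))\to\diff U_0(\gamma_0^*(t))$; substitution in the displayed identity above gives the claimed formula. The sub-Riemannian eikonal equation at $\gamma_0^*(t)$ then follows by passing to the limit in Corollary~\ref{cor:eik} for $\cF_\ve$, both sides of which depend continuously on $\ve$ once $U_\ve\to U_0$ holds in $C^1$ along the geodesic; alternatively it is immediate from the standard fact that $U_0$ is a viscosity solution of the SR eikonal equation and is classically differentiable at the point in question.

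\textbf{Main obstacle.} The delicate ingredient is the $C^1$-convergence $U_\ve\to U_0$ along $\gamma_0^*$. The metrics $\cF_\ve$ degenerate anisotropically as $\ve\downarrow 0$ (transverse spatial directions cost $\ve^{-2}$), so the neighbourhoods on which $U_\ve$ is smooth, and the constants quantifying this smoothness, must be controlled uniformly in $\ve$. The non-conjugacy assumption, imposed for all $\ve<\ve_0$ simultaneously, is exactly what keeps the exponential maps $\exp_\ve$ uniformly regular along the whole sequence of approximating minimisers, enabling a uniform Inverse Function Theorem argument. Without such uniformity one could only extract a subsequential weak limit of the covectors $\diff U_\ve(\gamma_\ve^*(t))$, which would not suffice to identify $\diff U_0$ pointwise along the limit geodesic.
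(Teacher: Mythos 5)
Your proof is correct in outline and follows the same strategy as the paper's: apply the Riemannian backtracking of Theorem~\ref{th:Backtracing} for each $\ve>0$, pass to the limit $\ve\downarrow 0$ using Theorem~\ref{th:ReedSheppCV}, and invoke non-conjugacy to keep differentiability under control; the eikonal equation is obtained by the analogous limit in Corollary~\ref{cor:eik}. The one substantive difference is how the convergence of the covector $\diff U_\ve(\gamma_\ve^*(t))$ is secured. The paper reads it off directly: $\diff U_\ve(\gamma_\ve^*(t))$ equals the PMP momentum $\hat{p}_\ve(t)$, and the phase-space trajectory $(\gamma_\ve^*,\hat{p}_\ve)$ depends continuously on $\ve$ as a solution of the canonical ODEs, so the limit of the covector comes for free once the triple product is factored. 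You instead aim for the stronger $C^1$-convergence of $U_\ve$ to $U_0$ on a tubular neighbourhood of $\gamma_0^*((0,1])$ via a uniform Inverse Function Theorem argument on the $\ve$-dependent exponential shooting maps. That buys a cleaner identification $\diff U_\ve\to\diff U_0$ but is a heavier machine: the uniformity of the IFT constants as the metric degenerates must be checked, and the a priori boundedness of the initial covectors $\hat{p}_\ve(0)$ is not automatic (the dual-norm constraint only bounds the horizontal components, while the transverse spatial component may be of order $\ve^{-1}$), so your ``compactness of bounded initial-momentum sets'' step needs an additional estimate that the paper's phase-space continuity argument sidesteps. Both routes hinge on the same non-conjugacy hypothesis; the paper's is the more economical.
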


\begin{proof}
From our assumptions on $\bp$ and $\gamma_\ve^*(t)$ for $\ve< \ve_0$, we have, recall Remark \ref{rem:nonsmoothpoint}, that $(U_\ve(\cdot))^2$ is differentiable at $\gamma_\ve^*(t)$ for all $0 \leq t \leq 1$ and $0 \leq \ve < \ve_0$. This implies that $U_\ve$ is differentiable at $\{\gamma_\ve^*(t) \; \vline \; 0 < t \leq 1 \}$, for all $0<\ve < \ve_0$.

From Theorem \ref{th:ReedSheppCV} we have pointwise convergence $U_\ve(\bp)\rightarrow U_0(\bp)$ and uniform convergence $\gamma_\ve^* \rightarrow \gamma_0^*$ as $\ve \downarrow 0$. Moreover, as $\gamma_\ve^*$ and $\gamma_0^*$ are solutions of the canonical ODEs of Pontryagin's Maximum Principle, the trajectories are continuously depending on $\ve >0$, and so are the derivatives $\dot{\gamma}_\ve^*$. As a result, we can apply the backtracking Theorem \ref{th:Backtracing} for $\ve > 0$ and take the 
limits:
\begin{equation}
\begin{aligned}
\dot{\gamma}_0^*(t) &= \lim_{\ve \downarrow 0} \dot{\gamma}_\ve^*(t) \\
&\hspace*{-.9em}\overset{\text{Thm. 4}}{=} \lim_{\ve \downarrow 0} U_\ve(\bp)\,(\mG_{\gamma_\ve^*(t);\ve}^{-1} {\rm d} U_\ve)(\gamma_\ve^*(t)) \\
&= U_0(\bp) \, \left( \lim_{\ve \downarrow 0} \mG_{\gamma_\ve^*(t);\ve}^{-1}\right)\left( \lim_{\ve \downarrow 0} ({\rm d} U_\ve(\gamma_\ve^*(t)))\right) \\
&\hspace*{-.9em}\overset{\text{Thm. 2}}{=} U_0(\bp) \, \mG_{\gamma_0^*(t);0}^{-1}({\rm d} U_0)(\gamma_0^*(t)).
\end{aligned}
\end{equation}
Furthermore,
\begin{equation*}
\begin{split}
1&=
\lim \limits_{\ve \downarrow 0} \sqrt{\mathcal{G}_{\bp;\ve}\left(\mathcal{G}_{\bp;\ve}^{-1} {\rm d}U_{\ve}(\bp),
\mathcal{G}_{\bp;\ve}^{-1} {\rm d}U_{\ve}(\bp)\right)} 
\\
&=\sqrt{\mathcal{G}_{\bp;0}\left(\mathcal{G}_{\bp;0}^{-1} {\rm d}U_0(\bp),
\mathcal{G}_{\bp;0}^{-1} {\rm d}U_0(\bp)\right)}
\end{split}
\end{equation*}
where we recall Corollary~\ref{cor:eik}.
Here due to our assumptions, $U_\ve$ and $U_0$ are both differentiable at $\bp$. Note that the limit for the inverse metric $\mG_{\bp,\ve}^{-1}$ as $\ve \downarrow 0$ exists, recall Cor. \ref{cor:eik}.
\end{proof}

Now that we stated our 4 main theoretical results we will prove them in the subsequent sections (and Appendix~\ref{app:WellPosedness}).

\section{Controllability Properties: Proof of Theorem~\ref{th:Controllability}, and Maxwell-points in $(\mathbb{M},d_{\cF_0^+})$ \label{ch:3}}

{\bf (Global controllability)} The two considered Reeds-Shepp models 
 $(\mathbb{M},d_{\cF_0})$ and $(\mathbb{M},d_{\cF_0^+})$ are globally controllable, in the sense that the distances $d_{\cF_0}$ and $d_{\cF_0^+}$ take finite values on $\bM\times \bM$. This easily follows from the observation that any path $\bx : [0,1] \to \bR^d$, which time derivative $\dot \bx := \frac {\diff \bx}{\diff t}$ is Lipschitz and non-vanishing, can be lifted into a path $\gamma : [0,1]\to \bM$ of finite length w.r.t.\ $\cF_0$ and $\cF_0^+$, defined by $\gamma(t):=(\bx(t),\dot \bx(t)/\|\dot \bx(t)\|)$ for all $t\in [0,1]$.
The fact that the infimum in (\ref{eqdef:dF}) is actually a minimum for $\mathcal{F}=\mathcal{F}_{0}^+$ follows by
Corollary~\ref{corol:ExistsMinOC} in App. \ref{app:WellPosedness} and (\ref{viewpoint}), and the fact that the quasi-distances \todo{R1.3: terminology} take finite values.

{\bf (Local controllability)} In order to show that the model $(\bbM,d_{\cF_0^+})$ is \emph{not} locally controllable, we need the following lemma.
\begin{lemma}\label{lemma:JM}
Let $\ul{n}:[0,\pi] \to \mathbb{S}^{d-1}$ be strictly 1-Lipschitz. Then $\int_{0}^{\pi} \ul{n}(0) \cdot \ul{n}(t)\, {\rm d}t>0$.
Let $\ul{n}:\R \to \mathbb{S}^{d-1}$ be strictly 1-Lipschitz and $2\pi$-periodic. Then all points $\ul{n}(t)$ lay in a common strict hemisphere. In particular $\ul{0} \notin \textrm{Hull}\{\ul{n}(t)\;|\; t \in [0,2\pi] \}$.
\end{lemma}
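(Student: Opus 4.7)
I would split the argument into two steps, with Part 2 reduced to Part 1 by an averaging trick.

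My plan for Part 1 is to pass from the Euclidean inner product to the spherical cosine: $\ul{n}(0)\cdot\ul{n}(t) = \cos\theta(t)$, where $\theta(t):=d_{\mathbb{S}^{d-1}}(\ul{n}(0),\ul{n}(t))$ denotes the geodesic distance on the unit sphere. Since for curves in $\mathbb{S}^{d-1}$ the Euclidean and intrinsic Lipschitz constants coincide (both equal $\mathrm{ess\,sup}\,\|\dot{\ul{n}}\|$), the $1$-Lipschitz hypothesis gives $\theta(t)\le t$ on $[0,\pi]$, and the strictness upgrades this to $\theta(t)<t$ for every $t\in(0,\pi]$. Because $\theta(t)<\pi$ and $\cos$ is strictly decreasing on $[0,\pi]$, I obtain the pointwise strict bound $\cos\theta(t)>\cos t$ on $(0,\pi]$; continuity of $\theta$ then lets me integrate to conclude
\[ \int_0^\pi \ul{n}(0)\cdot\ul{n}(t)\,\mathrm{d}t \;>\; \int_0^\pi \cos t\,\mathrm{d}t \;=\; 0. \]

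For Part 2 my idea is to average Part 1 over a full period. Fix $s\in\R$ and note that both $t\mapsto\ul{n}(s+t)$ and $t\mapsto\ul{n}(s-t)$ on $[0,\pi]$ are strictly $1$-Lipschitz arcs issued from $\ul{n}(s)$. Applying Part 1 to each, summing, and then invoking $2\pi$-periodicity to rewrite the combined integration range as one full period gives
\[ \ul{n}(s)\cdot \bar{\ul{n}} \;>\; 0, \qquad \bar{\ul{n}}:=\int_0^{2\pi}\ul{n}(u)\,\mathrm{d}u. \]
Consequently $\bar{\ul{n}}\ne\ul{0}$, and with $\ul{v}:=\bar{\ul{n}}/\|\bar{\ul{n}}\|$ one obtains $\ul{v}\cdot\ul{n}(s)>0$ for every $s\in\R$, placing the entire image of $\ul{n}$ in the open hemisphere $\{\ul{m}\in\mathbb{S}^{d-1}:\ul{v}\cdot\ul{m}>0\}$. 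Continuity of $\ul{n}$ together with compactness of $[0,2\pi]$ promotes this into a uniform lower bound $\ul{v}\cdot\ul{n}(s)\ge c>0$, which then extends by linearity to the convex hull, thereby excluding $\ul{0}$.

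The main obstacle, and arguably the only subtlety, is making the word ``strictly'' do exactly the right amount of work in Part 1: the extremal arc $\ul{n}(t)=\cos t\,\ul{n}(0)+\sin t\,\ul{v}$ (a unit-speed great semicircle terminating at the antipode) saturates $\int_0^\pi \cos t\,\mathrm{d}t=0$ and, in the $2\pi$-periodic case, yields a closed great circle with empty interior hemisphere, so the strictness must be sharp enough to exclude precisely those configurations. My interpretation ($d_{\mathbb{S}^{d-1}}(\ul{n}(s),\ul{n}(t))<|s-t|$ for all $s\ne t$, equivalently a Lipschitz constant strictly below $1$) achieves this cleanly, and once it is in place the remainder is a routine comparison-of-integrals and averaging computation.
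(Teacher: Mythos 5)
Your proposal follows the same route as the paper: establishing the pointwise strict bound $\ul{n}(0)\cdot\ul{n}(t)>\cos t$ on $(0,\pi]$ from the strict $1$-Lipschitz hypothesis via the spherical geodesic distance, integrating to get Part~1, and then proving Part~2 by the identical averaging device (splitting $\ul{n}(t_0)\cdot\int_0^{2\pi}\ul{n}\,\mathrm{d}t$ into two integrals over $[0,\pi]$ in forward and backward time and applying Part~1 to each). Your exposition is more detailed than the paper's one-line treatment of Part~1; the only blemish is the parenthetical claim that ``$d_{\mathbb{S}^{d-1}}(\ul{n}(s),\ul{n}(t))<|s-t|$ for all $s\ne t$'' is \emph{equivalent} to ``Lipschitz constant strictly below $1$'' --- these are not equivalent in general (the latter is strictly stronger), but either suffices for the argument, so this does not affect correctness.
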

\begin{proof}  The Lipschitzness assumption implies $\ul{n}(0) \cdot \ul{n}(t) > \cos (t)$ for all $t \in (0,\pi]$ so $\int_{0}^{\pi} \ul{n}(0) \cdot \ul{n}(t)\, {\rm d}t>0$.

Let $\ul{n}:\R \to \mathbb{S}^{d-1}$ be strictly 1-Lipschitz and $2\pi$-periodic.
Set $\ul{M}:= \int_{0}^{2\pi}\ul{n}(t)\, {\rm d}t$. Then for any $t_0 \in [0,2\pi]$ one has by the two assumptions
\[
\ul{n}(t_0) \cdot \ul{M} = \int \limits_{0}^{\pi} \ul{n}(t_0) \cdot \ul{n}(t_0+t) \,{\rm d}t + \int \limits_{0}^{\pi} \ul{n}(t_0) \cdot \ul{n}(t_0-t) \,{\rm d}t >0,
\]
so for all $t_0$, $\bn(t_0) \in \{\ul{n} \in \mathbb{S}^{d-1} \;|\; \ul{n} \cdot \ul{M} >0\}$.
\end{proof}
Now the statements \eqref{estimate} and \eqref{equality} on the non-local-controllability of $(\bbM, d_{\cF^+_0})$ are shown in two steps.
\\
Step 1: we show in the case of a constant cost function $\cC_{2}=\delta$ one has
 {\small $\limsup \limits_{\bp' \to \bp} d_{\cF^+_0}(\bp,\bp') \leq 2\pi \delta$}, for any $\bp \in \bM$. Indeed, one can design an admissible curve in $(\mathbb{M}, \mathcal{F}_{0}^+)$ as the concatenation of an in-place rotation, a straight line, and an in-place rotation. The length of the straight line is $\cO(\|\bp'-\bp\|)$ and vanishes when $\bp' \to \bp$, and the in-place rotations each have maximum cost $\pi \delta$.
\\
Step 2: we prove the lower bound {\small $\lim \limits_{\mu \downarrow 0} d_{\cF_0^+}((\bx,\bn),(\bx-\mu\bn,\bn)) \geq 2 \pi \delta$}, for any $(\bx,\bn)\in \bM$. This and the above established upper bound implies the required result.  As $\mathcal{C}_{1}, \mathcal{C}_{2} \geq \delta$,
we can restrict ourselves to the case of uniform cost $\mathcal{C}_{1}=\mathcal{C}_{2}=\delta=1$ and just show equality (\ref{equality}),
as the estimate (\ref{estimate}) follows by scaling with $\delta$.

Consider a Lipschitz regular path $\gamma(t) = (\bx(t), \bn(t))$, with $\dot \bx \propto \bn \text{ and } \dot \bx \cdot \bn \geq 0$, from $(\bx,\bn)$ to $(\bx-\mu\bn,\bn)$.
Then
\begin{equation*}
	\ul{0}= \mu \bn + \int_0^1 \dot \bx(t) {\rm d}t = \mu \bn(0) +\int_0^1 \|\dot \bx(t)\| \bn(t) {\rm d}t,
\end{equation*}
so $\ul{0} \in \Hull\{\bn(t);\, 0 \leq t \leq 1\}$. Let $\ul{m}:[0,1] \to \mathbb{S}^{d-1}$ be a constant speed parametrization of $\ul{n}$. Let $\tilde{\ul{m}}: \R \to \mathbb{S}^{d-1}$ be defined by $\tilde{\ul{m}}(2\pi t)=\ul{m}(t)$
for all $t \in [0,2\pi]$, and extended by $2\pi$-periodicity. If $\tilde{\ul{m}}(\cdot)$ were strictly 1-Lipschitz then by Lemma~\ref{lemma:JM} we would get
$\ul{0} \notin \textrm{Hull}\{\tilde{\ul{m}}(t)\;|\; t \in [0,2\pi] \}=\textrm{Hull}\{\ul{n}(t)\;|\; t \in [0,1]\}$ and a contradiction. Hence there exists a $t_0 \in \R$ such that $\|\dot{\tilde{\ul{m}}}(t_0)\|\geq 1$ and via the constant speed parametrization
assumption we get the required coercivity:
\begin{equation*}
\begin{split}
&1 \leq \|\dot{\tilde{\ul{m}}}(t_0)\| = \frac{1}{2\pi} \int_0^1 \|\dot{\ul{n}}(t)\|\, {\rm d}t \Rightarrow \\ &\int_{0}^{1} \mathcal{F}_{0}^{+}(\gamma(t),\dot{\gamma}(t))\, {\rm d}t \geq \int_{0}^{1} \mathcal{C}_{2}(\gamma(t))\,
\|\dot{\ul{n}}(t)\|\, {\rm d}t \geq 2\pi \delta.
\end{split}
\end{equation*}

To prove local controllability of the model $(\bbM, d_{\cF_0})$, we apply the logarithmic approximation for weighted sub-coercive operators on Lie groups, cf.~\cite{terelst} applied to the Lie group $SE(d)=\R^{d} \rtimes SO(d)$, in which the space of positions and orientations is placed via a Lie group quotient
$SE(d)/(\{0\} \times SO(d-1))$. One obtains a sharp estimate\footnote{For specific sharp estimates for $d=3$, in the context of heat-kernels estimation, see \cite[ch.5.1]{jorg}.}, where the weights of allowable (horizontal) vector fields is $1$, whereas the remaining spatial vector fields orthogonal to $\mathbf{n} \cdot \nabla_{\R^d}$ get weight $2$, as they follow by a single commutator of allowable vector fields, see e.g. \cite{duits_sub-riemannian_2016,duits_cuspless_2014}.
Relaxing all spatial weights to $2$ and continuity of costs $\mathcal{C}_{1}, \mathcal{C}_{2}$, yields (\ref{approx}). $\hfill \Box$

\begin{remark}
In view of the above one might expect that the point $(\ul{x} - \mu \ul{n}, \ul{n})$ is reached by a geodesic that consists of a concatenation of 1. an in-place rotation by $\pi$, 2. a straight line, 3. an in-place rotation by $\pi$.
However, this is not the case as can be observed
in the very lower left corner in Fig.~\ref{fig:JorgSE2}, where the \emph{two} minimizing red curves show a very different behavior.
This is explained by the next lemma.
\end{remark}
\begin{lemma} \label{lemma:frombehind}
Let $\mu>0$, and $\mathcal{C}_{1}=\mathcal{C}_{2}=\delta$. Let $\bfR_\theta$ denote the (counter-clockwise) rotation matrix about the origin by angle $\theta$. The endpoint $(\bx-\mu\bn,\bn)$ for each $\mu \geq 0$ is a Maxwell point w.r.t. $(\bx,\bn)$, since there are two minimizing geodesics in $(\mathbb{M},d_{\mathcal{F}_{0}^+})$ that are a concatenation
\begin{enumerate}
\item an in-place rotation from $(\bx,\bn)$ to $(\bx,\bfR_{\pm \frac{\pi}{2}}\bn)$,
\item a full U-curve, see \cite{moiseev_maxwell_2010}, departing from and ending in a cusp from $(\bx,\bfR_{\pm \frac{\pi}{2}}\bn)$ to $(\bx - \mu \bn,\bfR_{\mp \frac{\pi}{2}}\bn)$,
\item an in-place rotation from $(\bx - \mu \bn,\bfR_{\mp \frac{\pi}{2}}\bn)$ to \\ $(\bx-\mu\bn,\bn)$.
\end{enumerate}
We have the limit $\lim \limits_{\mu \downarrow 0} d_{\mathcal{F}_{0}^+}((\bx,\bn),(\bx-\mu\bn,\bn))=2\pi \delta$.
\end{lemma}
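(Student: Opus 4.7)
My plan is to construct the two claimed geodesics explicitly, verify their admissibility in $\mathcal{F}_0^+$, establish their minimality via the structural classification from Theorem~\ref{th:CuspsAndRotations}, pair them via an ambient reflection symmetry, and finally compute the $\mu \downarrow 0$ asymptotics of their total length.

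\emph{Admissibility.} The two in-place rotations have $\dot \bx \equiv 0$, so both $\dot\bx \propto \bn$ and $\dot \bx \cdot \bn \geq 0$ hold trivially, and each contributes $\cC_2 \cdot \pi/2 = \delta\pi/2$ to the length. The middle segment is a full U-curve in the sense of \cite{moiseev_maxwell_2010}, i.e.\ a piece of an SR extremal of $(\mathbb{M}, \mathcal{F}_0)$ running between two consecutive cusps. By Definition~\ref{def:cusp} and Lemma~\ref{lem:cusp}, the spatial control $\tilde u(t) = \bn(t)\cdot \dot \bx(t)$ vanishes and switches sign at each cusp; on the open interval strictly between two consecutive cusps it therefore has constant sign, which after possibly reversing the orientation of the parameterization may be assumed non-negative. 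Hence the U-curve is admissible for $\mathcal{F}_0^+$ and has the same length there as in $\mathcal{F}_0$. Concatenation preserves admissibility.

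\emph{Minimality and Maxwell pairing.} Each in-place rotation is the unique minimizer between its own endpoints in $(\mathbb{M}, d_{\mathcal{F}_0^+})$. The middle U-curve, being an SR extremal between two consecutive cusps, is a geodesic also in $(\mathbb{M}, \mathcal{F}_0^+)$ by the previous step. Global minimality of the concatenation then follows from Theorem~\ref{th:CuspsAndRotations}: any minimizer in $(\mathbb{M}, d_{\mathcal{F}_0^+})$ has keypoints only at the endpoints, bracketing a cuspless SR middle segment. Since the target is spatially behind the source along $\bn$ with coincident terminal orientation, no direct cuspless SR connection from $(\bx,\bn)$ to $(\bx-\mu\bn,\bn)$ exists, and the middle segment must therefore reverse the intermediate orientation by a net angle $\pi$; the full U-curve of \cite{moiseev_maxwell_2010} realizes this minimal reversal. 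The reflection $\sigma$ about the axis $\{\bx + t\bn : t \in \mathbb{R}\}$ is an isometry of $(\mathbb{M},\mathcal{F}_0^+)$ in the uniform-cost case (it preserves $\dot\bx \propto \bn$ and the sign of $\dot\bx\cdot\bn$), fixes both source and target, and exchanges the $\bfR_{+\pi/2}$ and $\bfR_{-\pi/2}$ constructions; hence $(\bx-\mu\bn,\bn)$ is a Maxwell point with the two displayed minimizers.

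\emph{Asymptotic length and main obstacle.} The total length equals $\delta\pi + L_U(\mu)$, where $L_U(\mu)$ is the length of the full U-curve between cusps whose spatial projections are $\bx$ and $\bx-\mu\bn$. The main technical step is to extract $L_U(\mu)$ from the explicit elliptic-function parameterization of uniform-cost SR Reeds-Shepp extremals in \cite{moiseev_maxwell_2010,sachkov_cut_2011} and to verify $L_U(\mu) \to \delta\pi$ as $\mu \downarrow 0$; this limit corresponds to the full non-inflectional loop connecting two cusps with coincident spatial projections and opposite intermediate orientations. Summing, the total length tends to $2\pi\delta$, matching both the upper and lower bounds already established in the proof of Theorem~\ref{th:Controllability}, which is the only non-routine computation in the argument.
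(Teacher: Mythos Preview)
Your approach is essentially the same as the paper's: reduce via Theorem~\ref{th:CuspsAndRotations} to the structure (in-place rotation) $+$ (cuspless SR segment) $+$ (in-place rotation), identify the middle piece as a full U-curve between cusps, pair the two solutions by the reflection symmetry fixing the axis $\bx+\bR\bn$, and then compute the $\mu\downarrow 0$ limit of the length.

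There is, however, a genuine gap in your minimality step. From ``no direct cuspless SR connection exists'' you jump to ``the middle segment must reverse the intermediate orientation by a net angle $\pi$,'' which is neither a well-defined constraint nor a consequence of anything you have established. The paper pins this down differently: it invokes the half-space property of cuspless SR geodesics (Remark~\ref{rem:globalmingeodesic}, \cite{duits_association_2013}) to show that the spatial target $\bx-\mu\bn$ can lie in the forward half-space of the post-rotation orientation only after a rotation of at least $\pi/2$, and by the time-reversed argument the terminal rotation is likewise at least $\pi/2$. With these lower bounds in hand, the paper then cites \cite{duits_association_2013} for the existence of an SR-optimal U-curve from $(\bx,\bfR_{\pi/2}\bn)$ to $(\bx-\mu\bn,\bfR_{-\pi/2}\bn)$ starting and ending in a cusp, which makes the exactly-$\pi/2$ concatenation a legitimate candidate. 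Your ``minimal reversal'' phrasing does not supply this lower bound, and without it you have not ruled out smaller in-place rotations joined by some other cuspless SR arc.

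A second, smaller point: for the limit the paper does not appeal to the general elliptic-function synthesis of \cite{moiseev_maxwell_2010,sachkov_cut_2011} but uses the explicit cusp-to-cusp length formula from \cite[Cor.~2]{duits_association_2013},
\[
d_{\cF_0^+}((\bx,\bn),(\bx-\mu\bn,\bn))=\frac{\pi}{2}+\int_0^{s_{\max}(1,\mu)}\frac{ds}{\sqrt{1-|\mu\sinh s+\cosh s|^2}}+\frac{\pi}{2},
\]
from which $\mu\downarrow 0$ immediately gives $2\pi\delta$ (after scaling). This is what you call ``the only non-routine computation,'' and it is worth writing out rather than deferring.
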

\begin{proof}
By rotation and translation covariance, we can restrict ourselves to $\ul{x}=\ul{0}$, $\ul{n}=\ul{a}$, and by the coplanarity result in \cite[Cor.6, Thm.8]{duits_sub-riemannian_2016}, we only need to consider $d=2$ and $\ul{a}=(1,0)^T$.
From Theorem \ref{th:CuspsAndRotations} (that we prove later) we know that keypoints only occur at the endpoints of minimal paths. Since cuspless geodesics stay in the positive half-space set by their initial orientation, recall Remark \ref{rem:globalmingeodesic}, the optimal path from $(\mathbf{0},\ba)$ to $(-\mu \ba,\ba)$ starts with a rotation by at least a $\frac{\pi}{2}$ angle. For the same reason, it ends with a rotation by at least a $\frac{\pi}{2}$ angle. There exists a U-curve, starting and ending in a cusp, optimally connecting $(\mathbf{0},\bfR_{\frac{\pi}{2}}\ba)$ with $(-\mu \ba, \bfR_{-\frac{\pi}{2}}\ba)$ \cite{duits_association_2013}. Hence the concatenation of curves as described points 1.-3. is optimal, and has the same length as the alternative (with rotations in the opposite direction).
For the total distance by such a curve we use \footnote{In \cite[Cor.2]{duits_association_2013} one must set $z(0)=z(L)=1$ and
$\dot{z}(0)=\chi \downarrow 0$} \cite[Cor.2]{duits_association_2013}:
\begin{equation*}
\begin{split}
&d_{\mathcal{F}_{0}^+}((\bx,\bn),(\bx-\mu\bn,\bn))= \\
&\frac{\pi}{2}+ \int_{0}^{s_{max}(1,\mu)}
 \frac{1}{\sqrt{1- |\mu \sinh s + \cosh s|^2}}\, {\rm d}s  +\frac{\pi}{2},
\end{split}
\end{equation*}
with $s_{max}(1,\mu)$ the first positive root of the denominator of the integrand. Letting $\mu \downarrow 0$ we get $\lim \limits_{\mu \downarrow 0} d_{\mathcal{F}_{0}^+}((\bx,\bn),(\bx-\mu\bn,\bn))=2\pi$.
\end{proof}

\begin{remark}
Consider the case $d=2$, $\mathcal{C}_{1}=\mathcal{C}_{2}=\delta$, and source point $\ul{p}_S=(\ul{x},\ul{n})=\ul{e}=(0,0,\theta=0)$.
The end-points $(\bx-\mu\bn,\bn)=(-\mu,0,0)$, with $\mu >0$ sufficiently small, are 1st Maxwell-points in $(\mathbb{M},d_{\mathcal{F}_{0}^+})$ where geodesically equidistant wavefronts departing from the source point collide for the first time, see Fig.~\ref{fig:Spheres}C.
The distance mapping $d_{\mathcal{F}_{0}}^{+}(\bp_{S}, \cdot)$ is not continuous, but the asymmetric distance spheres

\begin{equation*}
\mathcal{S}_R:=\{\ul{p} \in \mathbb{M} \;|\; d_{\mathcal{F}_{0}}^{+}(\bp_{S}, \bp)=R\}
\end{equation*} 
are connected and compact, and they collide at $R=2\pi$ in such a way that the origin $\bp_{s}$ becomes an interior point in the asymmetric balls of radius $R> 2\pi$.
\end{remark}
\begin{figure*}
\centerline{
\includegraphics[width=\hsize]{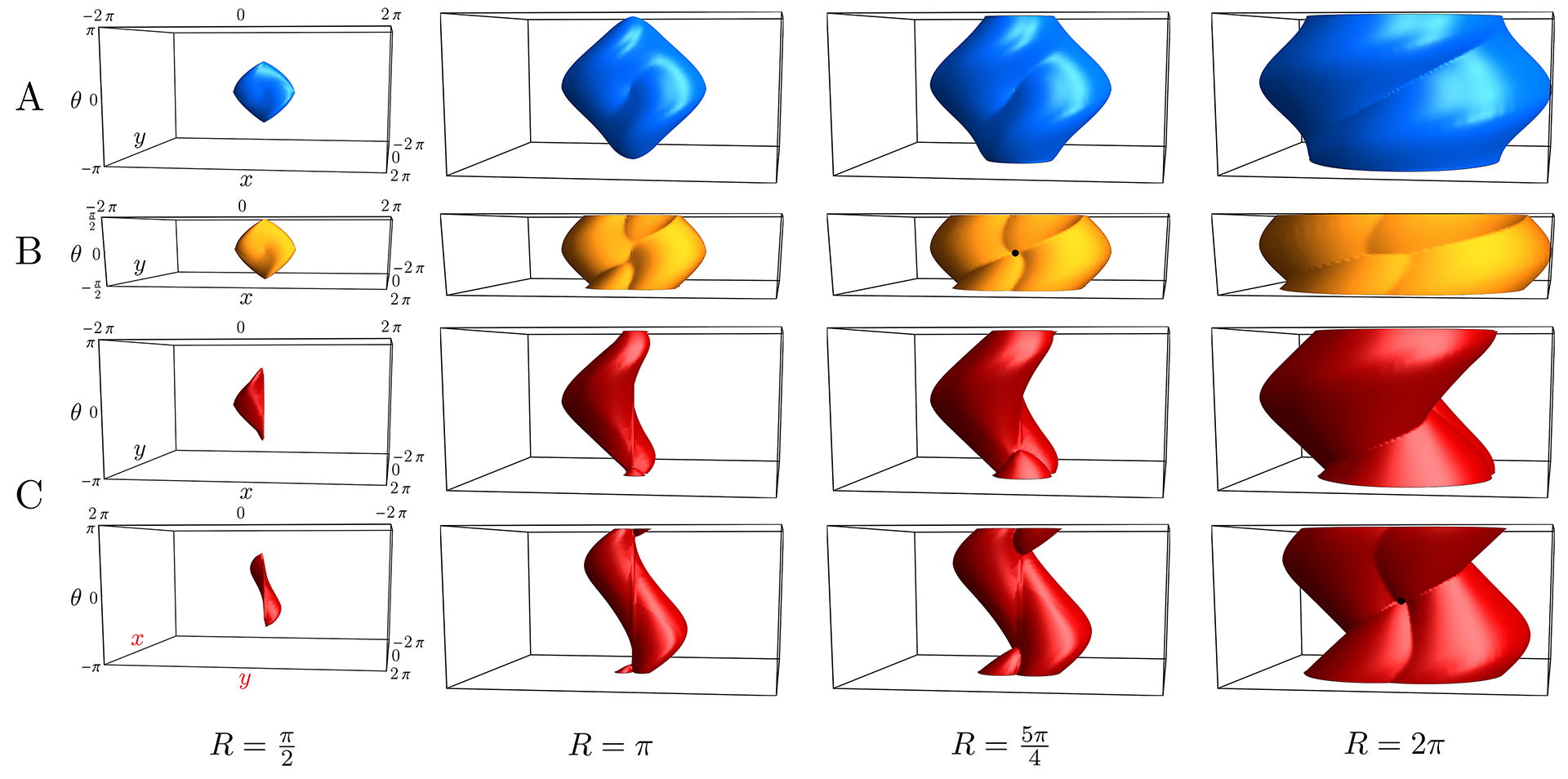}
}
\caption{The development of spheres centered around $\ul{e}=(0,0,0)$ with increasing radius $R$. \textbf{A:} the normal SR spheres on $\mathbb{M}$ given by $\{\ul{p} \in \mathbb{M} \;|\; d_{\mathcal{F}_0}(\ul{p},\ul{e})=R\}$ where the folds reflect the 1st Maxwell sets \cite{bekkers_pde_2015,sachkov_cut_2011}.
\textbf{B: } the SR spheres with identification of antipodal points given by $\left\{\ul{p} \in \mathbb{M} \;|\; \min\{\; d_{\mathcal{F}_0}(\ul{p},\ul{e}), d_{\mathcal{F}_0}(\ul{p}+(0,0,\pi),\ul{e})\; \}=R\right\}$ with additional folds (1st Maxwell sets) due to $\pi$-symmetry.
\textbf{C: } the asymmetric Finsler norm spheres given by $\{\ul{p} \in \mathbb{M} \;|\; d_{\mathcal{F}_0^+}(\ul{p},\ul{e})=R\}$ visualized from two perspectives with extra folds (1st Maxwell sets) at the back $(-\mu,0,0)$.
The black dots indicate points with two folds. In the case of B, this is a Maxwell-point with 4 geodesics merging. In the case of C, this is just the origin itself reached from behind at $R=2\pi$, recall Lemma~\ref{lemma:frombehind}. Although not depicted here, if the radius $R>2\pi$ the origin becomes an interior point of the corresponding ball.
 \label{fig:Spheres}}
\end{figure*}

\section{Cusps and Keypoints: Proof of Theorem~\ref{th:CuspsAndRotations} }\label{ch:proofcuspskeypoints}

In this section we provide a proof of Theorem~\ref{th:CuspsAndRotations} on the occurrence of cusps and keypoints. For the uniform cost case
$\mathcal{C}_{1}=\mathcal{C}_{2}=1$ for $d=2$, our
curve-optimization problem (\ref{eqdef:dF})
$(\mathbb{M},d_{\mathcal{F}_{0}})$ in consideration, boils down to a standard left-invariant curve optimization in the roto-translation group $SE(2)=\R^{2} \rtimes SO(2)$.
As we will apply tools from previous works \cite{duits_association_2013,boscain_curve_2014,boscain_existence_2010,sachkov_cut_2011}, we will make use of the following notations for expansion\footnote{Note that we use upper-indices for the control's (velocity components) as they are contra-variant.} of velocity and momentum in the left-invariant (co)-frame:
\begin{equation}\label{eq:speedmomentuminframe}
\begin{array}{c}
\left\{
\begin{array}{l}
\mathcal{A}_{1}:= \cos \theta \, \partial_{x} + \sin \theta \, \partial_y, \\
\mathcal{A}_{2}:= -\sin \theta \, \partial_x + \cos \theta \, \partial_y, \\
\mathcal{A}_{3}:=\partial_{\theta},
\end{array}
\right. \vspace{1em} \\
\left\{
\begin{array}{l}
\omega^{1} := \cos \theta \, {\rm d}x + \sin \theta \, {\rm d}y, \\
\omega^{2} := -\sin \theta \, {\rm d}x + \cos \theta \, {\rm d}y, \\
\omega^{3} := {\rm d}\theta,
\end{array}
\right. \\
\hspace*{1em}
\dot{\gamma}(t)= \sum \limits_{i=1}^{3} u^{i}(t) \, \left.\mathcal{A}_{i}\right|_{\gamma(t)} \in T_{\gamma(t)}(\mathbb{M}), \\ \hspace{1em}
\hat{\bp}(t)= \sum \limits_{i=1}^{3} \hat{p}_{i}(t) \, \left.\omega^{i}\right|_{\gamma(t)} \in T_{\gamma(t)}^*(\mathbb{M}),
\end{array}
\end{equation}
where the indexing of the left-invariant frame is different here, in order to stick to the ordering $(x,y,\theta)$ applied in this article.
Note that for the case $\ve=0$ admissible smooth curves $\gamma$ in $(\mathbb{M}, d_{\mathcal{F}_{0}})$ satisfy the horizontality constraint
$\dot{\gamma}(t) \in \textrm{Span}\{\left.\mathcal{A}_{1}\right|_{\gamma(t)},\left.\mathcal{A}_{3}\right|_{\gamma(t)}\}$.

\textbf{Proof of the statements regarding cusps:}
\begin{itemize}
\item
We can describe our curve optimization problem (\ref{eqdef:dF}) using a Hamiltonian formalism, with Hamiltonian $H(\hat{\bp})= \frac{1}{2}\left( \hat{p}_{1}^2 + \hat{p}_{3}^2 \right)= \frac{1}{2}$ \cite{moiseev_maxwell_2010}. By Pontryagin's Maximum Principle, geodesics adhere to the following Hamilton equations:

\begin{equation} \label{PMPinaccessibleform}
\left\{
\begin{array}{l}
\dot{p}_{1}=u^1= \hat{p}_{1}, \\
\dot{p}_{2}=u^2= 0, \\
\dot{p}_{3}=u^3=\hat{p}_{3},
\end{array}
\right.
, \hspace{2em}\
\left\{
\begin{array}{l}
\frac{d \hat{p}_{1}}{dt}= \hat{p}_{2} \hat{p}_{3}, \\
\frac{d \hat{p}_{2}}{dt}= -\hat{p}_{1} \hat{p}_{3}, \\
\frac{d \hat{p}_{3}}{dt}= -\hat{p}_{1} \hat{p}_{2}.
\end{array}
\right.
\end{equation}
For fixed initial momentum $\hat{\bp}(0)$, this uniquely determines a SR geodesic. Moreover, SR geodesics are contained within the (co-adjoint) orbits
\begin{equation} \label{coadjoint}
(\hat{p}_{1}(t))^2 + (\hat{p}_{2}(t))^2=(\hat{p}_{1}(0))^2 + (\hat{p}_{2}(0))^2.
\end{equation}
The parameter $t$ in the system \eqref{PMPinaccessibleform} is SR arc length, but by reparametrizing (possible as long as $u^1$ does not change sign) to spatial arc length parameter $s$, with $\frac{ds}{dt} = \hat{p}_1$, we get a partially linear system. Combining \eqref{PMPinaccessibleform} and \eqref{coadjoint}, we find orbits in the (hyperbolic) phase portrait induced by
\[
\begin{array}{l}
\left\{
\!
\begin{aligned}
\hat{p}_{2}'(s)= -\hat{p}_{3} \\
\hat{p}_{3}'(s)= -\hat{p}_{2}
\end{aligned} \right.\hspace{-.5em}  \Rightarrow \!
\left\{
\!\begin{aligned}
\hat{p}_2(s) &= \hat{p}_2(0) \cosh{s} - \hat{p}_3(0) \sinh{s}\\
\hat{p}_3(s) &= -\hat{p}_2(0) \sinh{s} + \hat{p}_3(0) \cosh{s}.
\end{aligned}\right.

\end{array}
\]
Hence $|\hat{p}_3(s)| = 1$ always has a solution for some finite (possibly negative) $s$, except when $\hat{p}^{2}(0)=\hat{p}_{3}(0)=0$, in which case the solutions are straight lines. Preservation of the Hamiltonian then implies $\hat{p}_1(s) = u^1(s) = \tilde{u}(s)=0$. We conclude that every SR geodesic (with unconstrained time $t \in \R$) in $(\mathbb{M},d_{\mathcal{F}_0})$ which is not a straight line admits a cusp.
\item We now consider $(\mathbb{M},d_{\mathcal{F}_{\ve}})$, $\ve > 0$. To have a cusp, we need $\hat{p}_1(t) = \hat{p}_2(t) = 0$ for some $t \in \R$. The co-adjoint orbit condition \eqref{coadjoint} then implies that $\hat{p}_1(t) = \hat{p}_2(t) = 0$ for all $t$, corresponding to a vertical geodesic that has purely angular momentum and no cusp. The same argument holds for $(\bbM,d_{\cF_\ve^+})$. In $(\bbM,d_{\cF_0^+})$ we have the condition that $u^1 \geq 0$, hence by definition it can never switch sign and all geodesics are cuspless.
\end{itemize}
\textbf{Proof of the statements regarding keypoints: }
\begin{itemize}
\item For the cases $(\bbM,d_{\cF_\ve})$ and $(\bbM,d_{\cF_\ve^+})$ with $\ve > 0$ we can use the same line of arguments as above. Also here both spatial controls have to vanish, resulting in vertical geodesics. The spatial projection of such curves is a single keypoint. For $(\bbM,d_{\cF_0})$ we rely on the result that SR geodesics are analytical, and therefore if the control $u^1(t) = 0$ for some open time interval $(t_0,t_1)$, then $u^1(t) = 0$ for all $t \in \R$, again corresponding to purely angular motion.
\item Geodesics in $(\mathbb{M},d_{\mathcal{F}_0^+})$ can have keypoints only at the boundaries. Suppose a geodesic
$\gamma:[0,1] \to \mathbb{M}$ in $(\mathbb{M},d_{\mathcal{F}_0^+})$ has an internal keypoint, with a corner of angle $\delta>0$, at internal time $T_{1} \in (0,1)$. Then one can create a local shortcut with a straight line segment connecting two sufficiently close points before and after the corner with two in-place rotations whose angles add up to $\delta$. With a suitable mollifier this shortcut can be approximated by a curve in $\Gamma$. For details see similar arguments in \cite{boscain_curve_2014}.
\end{itemize}
Next we explain the cases A), B) and C), where we fix initial point $\gamma(0)=\ul{e}=(0,0,0)$.
\begin{itemize}
\item[A)] Suppose that the endpoint $\ul{p}=(x,y,\theta) \in \overline{\gothic{R}}$ and $x \geq 0$. Then $\ul{p}$ can already be reached by a geodesic in $(\mathbb{M},d_{\mathcal{F}_0})$ and the positivity constraint (i.e. no reverse gear), which can only increase length, becomes obsolete.
\item[B)] Now suppose the endpoint $\ul{p}=(x,y,\theta)$ lays in the half-space $x<0$. Then by the half-space property of geodesics in $(\mathbb{M},d_{\mathcal{F}_0})$, cf.\cite[Thm.7]{duits_association_2013}, the geodesic in $(\mathbb{M},d_{\mathcal{F}_0^+})$
     must have a keypoint. By the preceding keypoints 
     can only be located at the boundaries. If it takes place at the endpoint only, then still the constraint $x<0$ is not satisfied, thereby it must take place at the origin.
\item[C)] In those cases the endpoint $\ul{p}$ lays outside the connected cone of reachable angles, which are by \cite[Thm.9]{duits_association_2013} bounded (for those endpoints) by geodesics ending in a cusp (so not endpoints of geodesics starting at a cusp). So for those points, minimizing geodesics will first move by an in-place rotation (along a spherical geodesic) until it hits the cusp surface $\partial \gothic{R}$, after which it is traced back to the origin by a regular geodesic with strictly positive spatial control inside the volume $\gothic{R}$.
\end{itemize}

\section{Eikonal equations and backtracking: Proof of Prop. \ref{prop:DualMetric}, Corr. \ref{cor:eik} and Thm. ~\ref{th:Backtracing}}\label{ch:proofbacktracking}

First we shall prove Proposition~\ref{prop:DualMetric}, regarding the duals of $\cF_\ve$ and $\cF_\ve^+$, and Corollary~\ref{cor:eik}, providing explicit expressions for the corresponding eikonal equations. To this end we need a basic lemma on computing dual norms on $\R^{n}$, where later we will set $n = 2d - 1 = \mathrm{dim}(\bbM)$.
\begin{lemma}\label{le:normanddualnorm}
Let $\ul{w} \in \R^{n}$ and let $M \in \R^{n\times n}$ be symmetric, positive definite. Define the norm
$F_{M, \ul{w}}:\R^{n} \to \R^+$ by
\[
F_{M, \ul{w}}(\ul{v})= \sqrt{(M\ul{v},\ul{v}) + (\ul{w},\ul{v})_{-}^2}.
\]
Then its dual norm $F_{M, \ul{w}}^*: (\R^{n})^* \to \R^+$ equals
\begin{equation}\label{eq:dualFMw}
F_{M, \ul{w}}^*(\hat{\ul{v}})= \sqrt{(\hat{\ul{v}},\hat{M}\hat{\ul{v}}) +  (\hat{\ul{v}},\hat{\ul{w}})_{+}^2},
\end{equation}
\mbox{with $\hat{M}=(M + \ul{w} \otimes \ul{w})^{-1}$ and $\hat{\ul{w}}=\frac{M^{-1}\ul{w}}{\sqrt{1+ (\ul{w}, M^{-1}\ul{w})}}$}.
\end{lemma}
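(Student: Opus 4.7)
The plan is to exploit the piecewise quadratic structure of $F_{M,\ul{w}}$ together with the Sherman--Morrison formula. First, since $(\ul{w},\ul{v})_{-}^{2}$ vanishes on $\{(\ul{w},\ul{v})\geq 0\}$ and equals $(\ul{w},\ul{v})^{2}$ on $\{(\ul{w},\ul{v})\leq 0\}$, one has
\[
F_{M,\ul{w}}(\ul{v})^{2} = \begin{cases} (M\ul{v},\ul{v}), & (\ul{w},\ul{v})\geq 0, \\ ((M+\ul{w}\otimes \ul{w})\ul{v},\ul{v}), & (\ul{w},\ul{v})\leq 0. \end{cases}
\]
The two expressions agree on the hyperplane $\{(\ul{w},\ul{v})=0\}$, so $F_{M,\ul{w}}$ is a well-defined continuous norm on $\R^{n}$. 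Consequently
\[
F_{M,\ul{w}}^{*}(\hat{\ul{v}}) = \max(S_+, S_-),
\]
where $S_\pm$ denotes the supremum of $\langle \hat{\ul{v}},\ul{v}\rangle/F_{M,\ul{w}}(\ul{v})$ over nonzero $\ul{v}$ with $\pm(\ul{w},\ul{v}) \geq 0$.

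I would next compute each $S_\pm$ by ordinary Riemannian duality. The unconstrained maximizer for $S_+$ is $\ul{v}\propto M^{-1}\hat{\ul{v}}$, with value $\sqrt{(M^{-1}\hat{\ul{v}},\hat{\ul{v}})}$; that for $S_-$ is $\ul{v}\propto \hat{M}\hat{\ul{v}}$, with value $\sqrt{(\hat{M}\hat{\ul{v}},\hat{\ul{v}})}$. To decide when each of these candidates lies in the required half-space, I invoke Sherman--Morrison on $\hat{M}=(M+\ul{w}\otimes \ul{w})^{-1}$:
\[
\hat{M} = M^{-1} - \frac{M^{-1}\ul{w}\otimes M^{-1}\ul{w}}{1+(\ul{w},M^{-1}\ul{w})} = M^{-1} - \hat{\ul{w}}\otimes \hat{\ul{w}}.
\]
Writing $a := (\ul{w},M^{-1}\ul{w}) \geq 0$ and $b := (\ul{w},M^{-1}\hat{\ul{v}})$, a short computation yields $(\hat{\ul{w}},\hat{\ul{v}}) = b/\sqrt{1+a}$ and $(\ul{w},\hat{M}\hat{\ul{v}}) = b/(1+a)$, so the three scalars $(\ul{w},M^{-1}\hat{\ul{v}})$, $(\ul{w},\hat{M}\hat{\ul{v}})$ and $(\hat{\ul{w}},\hat{\ul{v}})$ are positive scalar multiples of one another and in particular share the same sign.

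Finally, I would combine the cases. When $(\hat{\ul{w}},\hat{\ul{v}}) \geq 0$, the unconstrained maximizer $\ul{v}\propto M^{-1}\hat{\ul{v}}$ is feasible for $S_+$, giving $S_+ = \sqrt{(M^{-1}\hat{\ul{v}},\hat{\ul{v}})}$; meanwhile the candidate for $S_-$ is infeasible, so the supremum on $\{(\ul{w},\ul{v})\leq 0\}$ is attained on the boundary hyperplane, on which both quadratic forms defining $F_{M,\ul{w}}$ coincide, forcing $S_- \leq S_+$. The identity $M^{-1}=\hat{M}+\hat{\ul{w}}\otimes\hat{\ul{w}}$ then rewrites
\[
F_{M,\ul{w}}^{*}(\hat{\ul{v}})^{2} = (\hat{M}\hat{\ul{v}},\hat{\ul{v}}) + (\hat{\ul{w}},\hat{\ul{v}})^{2},
\]
which matches \eqref{eq:dualFMw} since $(\hat{\ul{w}},\hat{\ul{v}})_+^{2} = (\hat{\ul{w}},\hat{\ul{v}})^{2}$ in this regime. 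The symmetric case $(\hat{\ul{w}},\hat{\ul{v}})\leq 0$ yields $F_{M,\ul{w}}^{*}(\hat{\ul{v}})^{2}=(\hat{M}\hat{\ul{v}},\hat{\ul{v}})$, again in agreement with \eqref{eq:dualFMw} because the $(\hat{\ul{w}},\hat{\ul{v}})_+$ term then vanishes. The only mildly delicate step is the boundary comparison showing that the infeasible side cannot dominate; this is immediate from the coincidence of the two defining quadratic forms on $\{(\ul{w},\ul{v})=0\}$, which controls the constrained supremum on the losing half-space by the unconstrained (and thus feasible) one on the winning half-space.
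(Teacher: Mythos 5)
Your proof is correct, but it takes a genuinely different route from the paper's. The paper reduces the general $(M,\ul{w})$ to the canonical case $M=\Id$, $\ul{w}=(w_1,\mathbf{0})$ by an affine change of variables, then exploits the direct-sum compatibility of norm duality to decouple the problem into a trivially checkable one-dimensional piece (involving $w_1$) plus a classical Euclidean piece, and finally verifies that the announced formulas $\hat M = (M+\ul w\otimes\ul w)^{-1}$, $\hat{\ul w}=M^{-1}\ul w/\sqrt{1+(\ul w,M^{-1}\ul w)}$ transform covariantly under $M\mapsto A^{\trans}MA$, $\ul w\mapsto A^{\trans}\ul w$. Your proof instead works with arbitrary $(M,\ul w)$ from the start: you split the supremum defining $F^*$ according to the sign of $(\ul w,\ul v)$, on each half-space recognize a pure Riemannian norm with matrix $M$ or $M+\ul w\otimes\ul w$, and use Sherman--Morrison to show that the unconstrained maximizers $M^{-1}\hat{\ul v}$ and $\hat M\hat{\ul v}$ both lie on the same side of the hyperplane $(\ul w,\cdot)=0$, determined by the common sign of $(\ul w,M^{-1}\hat{\ul v})$, $(\ul w,\hat M\hat{\ul v})$, and $(\hat{\ul w},\hat{\ul v})$. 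That sign comparison, together with the observation that on the dividing hyperplane the two quadratic forms coincide (so the losing half-space's constrained supremum is dominated by the winning side's unconstrained one), is the whole argument. What the paper's approach buys is brevity and structural clarity once one trusts the covariance check; what yours buys is a direct, self-contained computation with no canonical-form reduction, and it makes visibly explicit the pleasant fact that the dual is again a two-piece Riemannian norm whose active piece is selected by $\mathrm{sign}(\hat{\ul w},\hat{\ul v})$ --- which is precisely the structure exploited later in the backtracking Theorem~\ref{th:Backtracing}. Both proofs are valid.
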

\begin{proof}
For $n=1$ the result is readily verified, and for $\ul{w}=\ul{0}$ the result is classical.
We next turn to the special case $M= \Id$, and $\bw = (w_1, \mathbf{0}_{\bR^{n-1}})$ is zero except maybe for its first coordinate $w_1$.
Thus for any $\bv = (v_1,\bv_2) \in \bR^n = \bR \times \bR^{n-1}$ one has the splitting
\begin{equation}
\begin{split}
F_{M,\bw}(v_1, \bv_2)^2 =& \left(|v_1|^2 + ( w_1 v_1)_-^2\right) + \|\bv_2\|^2 \\:= & F_1(v_1)^2+F_2(\bv_2)^2.
\end{split}
\end{equation}
Using the compatibility of norm duality with such splittings, and the special cases $n=1$ and $\bw=0$ mentioned above, we obtain
\begin{equation*}
\begin{split}
	(F_{M, \bw}^*(\cot v_1, \cot \bv_2))^2 &= (F_1^*(\cot v_1))^2+ (F_2^*(\cot \bv_2))^2  \\
&=\frac{|\cot v_1|^2+ (w_1\cot v_1)_+^2}{1+|w_1|^2} + \|\cot \bv_2\|^2,
\end{split}
\end{equation*}
which is exactly of the form \eqref{eq:dualFMw}. The general case for arbitrary $\bw$ and symmetric positive definite $M$ follows from affine invariance. Indeed let $A$ be an invertible $n \times n$ matrix, and let $M' = A^\trans M A$ and $\bw' = A^\trans \bw$. Let $F = F_{M,\bw}$ and $F'=F_{M',\bw'}$, so that $F'(\bv)=F(A\bv)$ for all $\bv \in \bR^n$. Let $F^*$, $\hat M$, $\hat \bw$, and $F'^*$, $\hat M'$, $\hat \bw'$, be respectively the dual norms and the matrices defined by the explicit formulas above. Then denoting $B := (A^\trans )^{-1}$ one has by the definition of dual norms that $F'^*(\cot \bv) = F^*(B \cot \bv)$ for all $\cot \bv \in \bR^n$, and by the explicit formulas $\hat M' = B^\trans \hat M B$, $\bw' = B^\trans \bw$. Thus, $F^*=F^*_{M, \bw}$ holds if and only if $F'^* = F^*_{M', \bw'}$. Since for any $M, \bw$, there exists a linear change of variables $A$ such that $M'=\Id$ and $\bw'$ is zero except maybe for its first coordinate, the proof is complete.
\end{proof}

Now Proposition~\ref{prop:DualMetric} follows from Lemma \ref{le:normanddualnorm} by writing out the dual norm, using for each $\bp \in \bbM$:
\begin{equation}\label{eq:Mpandwp}
\begin{aligned}
M_{\bp} &= (\mathcal{C}_{1}(\bp))^2 (D_{\ul{n}}^{\ve})^{-1} \oplus (\mathcal{C}_{2}(\bp))^2 I_{d} \quad \textrm{ and } \\
\ul{w}_{\bp}&= \left\{
\begin{array}{ll} \cC_1(\bp) \sqrt{\ve^{-2}-1} \; (\ul{n},\ul{0}), \quad & \text{for } \cF_\ve^+, \\
\mathbf{0}, \quad &\text{for } \cF_\ve, \end{array}\right.
\end{aligned}
\end{equation}
with $D_\bn^\ve$ as in \eqref{Dnve}. Corollary~\ref{cor:eik} then follows by setting the momentum covector $\hat{\bp}={\rm d}U(\bp)$ equal to the derivative of the value function evaluated at $\bp$.

Now that we have derived the eikonal equations, we obtain the
backtracking Theorem~\ref{th:Backtracing} by Proposition~\ref{prop:GeodesicODE} in App.~\ref{app:Backtracing}, which shows us that level sets of solutions of the eikonal equations are geodesically equidistant surfaces and that geodesics are found by an intrinsic gradient descent.

However, to obtain the explicit backtracking formulas we
 differentiate the Hamiltonian, rather than the dual metric, which is equivalent thanks to \eqref{eq:HamiltonBacktracing} (in Remark~\ref{rem:LagrangianHamiltonian} in App.~\ref{app:Backtracing}). 
 We focus below on the model $(\bbM,d_{\cF_\ve^+})$ without reverse gear, since the other case is similar. Let $\bp \in \bM$, let $F := \cF_\ve^+(\bp, \cdot)$, and let $\cot \bp = (\cot \bx, \cot \bn) \in T^*_\bp (\bM)$. Then differentiating w.r.t.\ $\cot \bn$ we obtain
	\begin{equation*}
		 \diff_{\cot \bn} F^*(\cot \bx, \cot \bn)^2
		 =  \cC_2(\bp)^{-2} \, \diff_{\cot \bn} \|\cot \bn\|^2
		 = 2 \, \cC_2(\bp)^{-2} \cot \bn,
	\end{equation*}
	where $\|\cdot\|$ is the Riemannian metric induced by the embedding $\bS^{d-1} \subset \bR^d$.
Differentiating w.r.t.\ $\cot \bx$ we obtain 
	\begin{align}\label{eq:dhatxFstar}
			\diff_{\cot \bx} F^*(\cot \bx, \cot \bn)^2 &
		= \cC_1(\bp)^{-2} \, \diff_{\cot \bx} (
		\cot \bx \cdot D_\bn^\ve \cot \bx -(1-\ve^2) (\cot \bx \cdot \bn)^2_-) \nonumber
		\\ &
		= 2 \, \cC_1(\bp)^{-2}
		\begin{cases}
			D_\bn^\ve \cot \bx & \text{if } \cot \bx \cdot \bn \geq 0,\\
			\ve^2 \Id \cot \bx & \text{if } \cot \bx \cdot \bn \leq 0.
		\end{cases}		
	\end{align}
	The announced result (\ref{simple2}), which is equivalent to its more concise abstract form (\ref{btsimple}), follows by choosing $\cot \bx := \nabla_{\bR^d} U(\gamma(t))$ and $\cot \bn := \nabla_{\bS^{d-1}} U(\gamma(t))$ and a basic re-scaling $[0,L]\in t \mapsto t/L \in [0,1]$. $\hfill \Box$

\begin{remark}\label{rem:dualnormmovingframe}
The computation of the dual norms can be simplified by expressing velocity (entering the Finsler metric) and momentum (entering the dual metric) in a (left-invariant) local, orthogonal, moving frame of reference, attached to the point $\bp = (\bx,\bn) \in \bbM$:
\begin{equation} \label{localframe}
\dot{\ul{p}}= \sum \limits_{i=1}^{2d-1} u^{i} \left.\mathcal{A}_{i} \right|_{\bp}, \ \ \hat{\ul{p}}= \sum \limits_{i=1}^{2d-1} \hat{p}_{i} \, \left.\omega^{i}\right|_{\ul{p}} 
\end{equation}
where a moving frame of reference is chosen such that
\[
\left\{
\begin{array}{l}
u^{d}= \tilde{u} = \ul{n}\cdot \dot{\ul{x}}, \\
\sum \limits_{i=1}^{d-1} (u^{i})^2 = \|\dot{\ul{x}}\|^{2}- (\ul{n} \cdot \dot{\ul{x}})^2, \\
\sum \limits_{i=1}^{d-1} (u^{d+i})^2 = \|\dot{\ul{n}}\|^{2},
\end{array}
\right.
\]
inducing a corresponding dual frame $\{\left.\omega^{i}\right|_{\ul{p}}\}$ via
\begin{equation}\label{eq:dualframe}
\langle \left.\omega^{i}\right|_{\bp}, \left.\mathcal{A}_{j}\right|_{\bp} \rangle =\delta^{i}_{j}, \textrm{ for all } i,j=1,\ldots,2d-1.
\end{equation}
W.r.t. the left-invariant frame the matrices $D_\bn^\ve$, $M_\bp$ as in \eqref{eq:Mpandwp} and $\hat{M_\bp}$ all become diagonal matrices, and the dual can be computed straightforwardly. Furthermore, in this formulation we can see from the expression for the dual $(\cF_0^+)^*$, i.e. in the limit $\ve \downarrow 0$, that the positive spatial control $u^d$ constraint results in a positive momentum $\hat{p}_d$ constraint:
\begin{equation}
(\cF_0^+)^*(\bp,\hat{\bp}) = \sqrt{\frac{(\hat{p}_d)_+^2}{\cC_1^{2}(\bp)} + \frac{1}{\cC_2^{2}(\bp)} \sum_{i = d+1}^{2d-1} (\hat{p}_i)^2}.
\end{equation}
Therefore the eikonal equation in the positive control model $(\bM, d_{\cF_0^+})$ is simply given by
\begin{equation}
\sqrt{
	\frac{\|\nabla_{\bS^{d\!-\!1}}U(\bp)\|^2}{\mathcal{C}^{2}_{2}(\bp)} +
	\frac{ ((\ul{n} \cdot \nabla_{\R^{d}}U(\bp))_+)^2}{\mathcal{C}^{2}_{1}(\bp)}
} = 1
\end{equation}
\end{remark}

\section{Discretization of the Eikonal PDEs}
\label{sec:Implementation}
\label{ch:implement}

\subsection{Causal operators and the fast marching algorithm}

The fast marching algorithm is an efficient numerical method \cite{tsitsiklis_efficient_1995} for numerically solving the static first order Hamilton-Jacobi-Bellman (or simply eikonal) PDE \eqref{eqdef:EikonalPDE} which characterizes the distance map $U$ to a fixed source point $\pSource$. Fast marching is tightly connected with Dijkstra's algorithm on graphs, and \todo{R5.3: typo} in particular it shares the $\cO(K N \ln N)$ complexity, where $N = \#(X)$ is the cardinality of the discrete domain $X \subset \bM$, $X\ni \pSource$, and $K$ is the average number of neighbors for each point.
Both fast marching and Dijkstra's algorithms can be regarded as specialized solvers of non-linear fixed point systems of equations $\Lambda u = u$, where the unknown $u\in \bR^X$ is a discrete map representing the front arrival times, which rely on the a-priori assumption that the operator $\Lambda:\bR^X \to \bR^X$ is \emph{causal} (and monotone, but this second assumption is not discussed here). Causality informally means that the estimated front arrival time $\Lambda u(\bp)$ at a point $\bp \in X$ depends on the given arrival times $u(\bq)$, $\bq\in X$, prior to $\Lambda u(\bp)$, but not on the simultaneous or the future ones.
Formally, one requires that for any $u,v\in \bR^X$, $t\in \bR$:
\begin{equation}
\label{eqdef:CausalityProperty}
\begin{split}
	&\text{If } u^{<t}=v^{<t} \text{ then } (\Lambda u)^{\leq t} = (\Lambda v)^{\leq t}, \\
	&\quad\text{where }
		u^{<t}(\bp) :=
	\begin{cases}
		u(\bp) & \text{if } u(\bp)<t,\\
		+\infty & \text{otherwise},
	\end{cases}
\end{split}
\end{equation}
and  $v^{<t}$, $(\Lambda u)^{\leq t}$ and $(\Lambda v)^{\leq t}$ are defined similarly. 

\paragraph{A semi-Lagrangian scheme.}
We implemented two discretizations of the eikonal equation \eqref{eqdef:EikonalPDE} which benefit from the causality property. The first one is a semi-Lagrangian scheme, inspired by \emph{Bellman's optimality principle} which informally states that any sub-policy of an optimal policy is an optimal policy.
Formally, let $\cF$ be a Finsler metric, and let $U := d_\cF(\cdot, \pSource)$ be defined as the distance to a given source point $\pSource$. Then for any $\bp\in \bM$ and any neighborhood $V$ of $\bp$ not containing $\pSource$ one has the property
\begin{equation}
\label{eq:Bellman}
	U(\bp) := \min_{\bq \in \partial V} d_\cF(\bp,\bq) + U(\bq).
\end{equation}
In the spirit of \cite{tsitsiklis_efficient_1995,sethian_ordered_2001} we discretize \eqref{eq:Bellman} by introducing for each interior $\bp \in X \sm \{\pSource\}$ a small polygonal neighborhood $V(\bp)$, which vertices belong to the discrete point set $X$. The nonlinear operator $\Lambda$ is defined as 
\begin{equation}
\label{eqdef:HopfLax}
\begin{split}
\Lambda u(\bp) :=  \min_{\substack{\{\bq_1, \cdots, \bq_n\}\\ \text{facet of } \partial V(\bp)}} \min_{\xi \in \Xi}
\cF\left(\bp, \sum_{i=1}^n \xi_i \bq_i - \bp\right) \\ + \sum_{i=1}^n \xi_i u(\bq_i),
\end{split}
\end{equation}
where $\Xi = \{ \xi \in \bR^n_+; \, \sum_{i=1}^n \xi_i=1\}$. In other words, the boundary point $\bq \in \partial V(\bp)$ in \eqref{eq:Bellman} is represented in \eqref{eqdef:HopfLax} by the barycentric sum $\bq = \sum_{i=1}^n \xi_i \bq_i$, the distance $d_\cF(\bp, \bq)$ is approximated with the norm $\cF(\bp, \bq-\bp)$, and the value $U(\bq)$ is approximated with the interpolation $\sum_{i=1}^n \xi_i u(\bq_i)$.

We refer to \cite{sethian_ordered_2001,vladimirsky_static_2006} for proofs of convergence, and for the following essential property: the operator \eqref{eqdef:HopfLax} obeys the causality property \eqref{eqdef:CausalityProperty} iff the chosen stencil $V(\bp)$ obeys the following generalized acuteness property: for any $\bq, \bq'$ in a common facet of $V(\bp)$, one has
\begin{equation*}
\< d_{\cot \bp} \cF(\bp, \bq-\bp), \bq'-\bp\> \geq 0.
\end{equation*}
For the construction of such stencils $V(\bp)$, $\bp \in X$, we rely on the previous works \cite{mirebeau_anisotropic_2014,mirebeau_efficient_2013} and on the following observation: the metrics $\cF_\ve$ and $\cF_\ve^+$ associated to the Reeds-Shepp car models can be decomposed as 
\begin{equation}
\label{eq:SplittingVariables}
	\cF(\bp, (\dot \bx, \dot \bn))^2 = \cF_1(\bp, \dot \bx)^2 +\cF_2(\bp, \dot \bn)^2,
\end{equation}
which allows to build the stencils $V(\bp)$ for $\cF$ by combining, as discussed in \cite[p.\ 9]{mirebeau_anisotropic_2014}, some lower dimensional stencils $V_1(\bp)$ and $V_2(\bp)$ built independently for for the spatial $\bx \in \bR^d$ and spherical $\bn \in \bS^{d-1}$ variables.

We discretize $\bS^1$ uniformly, with the standard choice of stencil. We discretize $\bS^2$ by refining uniformly the faces of an icosahedron and projecting their vertices onto the sphere (as performed by the Mathematica$^\text{\textregistered}$ Geodesate function). The resulting triangulation only features acute interior angles, in the classical Euclidean sense, and thus provides adequate stencils since in our applications $\cF_2(\bp, \dot \bn) = \cC_2(\bp) \|\dot\bn\|$ is proportional to the Euclidean norm, see Fig. \ref{fig:Stencils2DAndSphere}. We typically use $60$ discretization points for $\bS^1$, and from $200$ to $2000$ points for $\bS^2$.

We discretize $\bR^d$ using the Cartesian grid $h \bZ^d$, where $h >0$ is the discretization scale.
The norm $\cF_{\ve,1}(\bp, \dot \bx) = \cC_1(\bp) \sqrt{\dot \bx^T (D_\bn^{\ve})^{-1} \dot \bx}$, recall the notation in \eqref{eq:SplittingVariables}, induced by the approximate Finsler metric $\cF_\ve$ on the physical variables in $\bR^d$, is of Riemannian type and strongly anisotropic. In dimension $d \leq 3$, this is the adequate setting for the adaptive stencils of \cite{mirebeau_anisotropic_2014}, built using discrete geometry tools known as lattice basis reduction.
The norm $\cF_{\ve,1}^+(\bp, \dot \bx) = \cC_1(\bp) \sqrt{\dot \bx^T (D_\bn^{\ve})^{-1} \dot \bx+ (\ve^{-2}-1)(\bn,\bx)_-^2}$ induced by $\cF_\ve^+$ on $\bR^d$ is Finslerian (i.e.\ non-Riemannian) and strongly anisotropic. In dimension $d=2$, this is the adequate setting for the adaptive stencils of \cite{mirebeau_efficient_2013}, built using an arithmetic object known as the Stern-Brocot tree.

\begin{figure*}
\centering
\includegraphics[height=4cm]{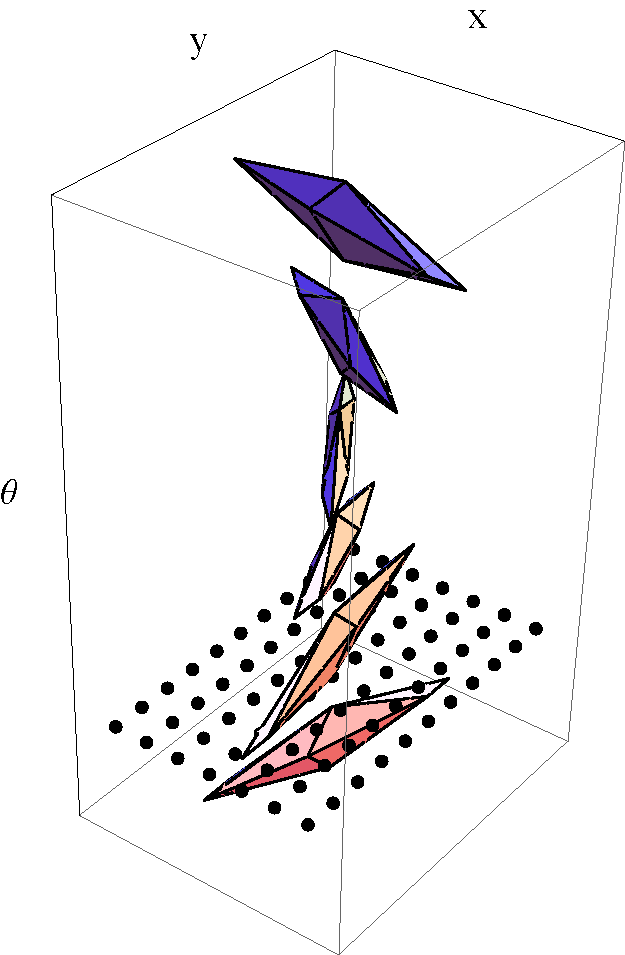}
\hspace{1cm}
\includegraphics[height=4cm]{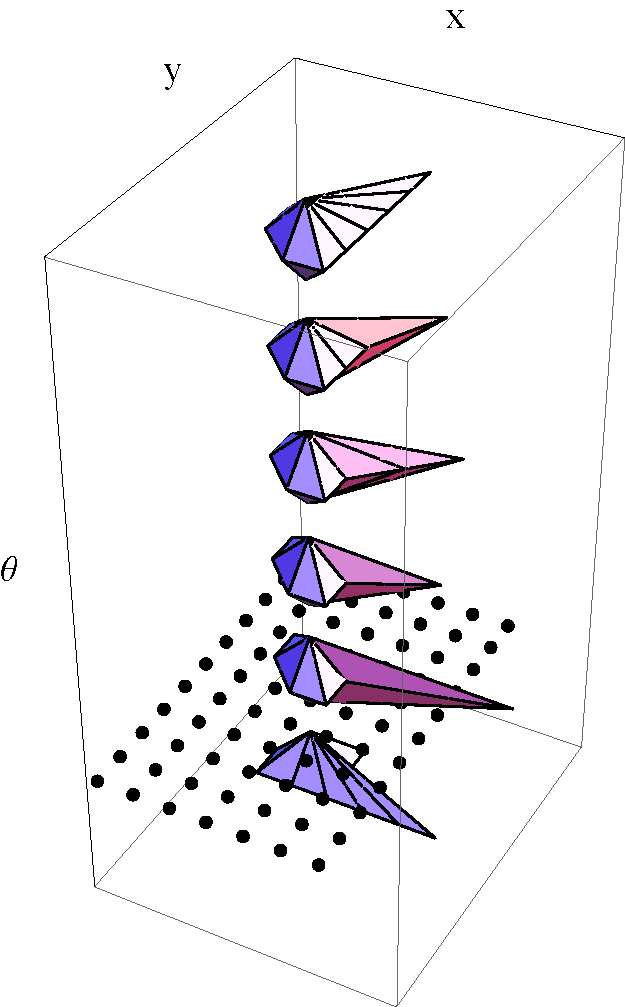}
\hspace{2cm}
\includegraphics[height=3.8cm]{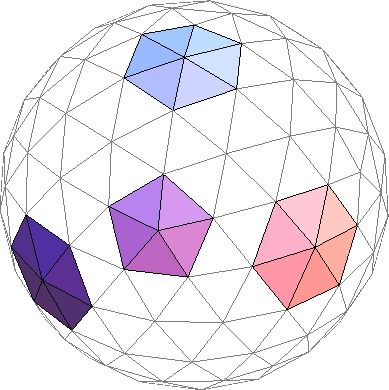}
\caption{Left: Stencil used for the metric $\cF_\ve$ on $\bR^2 \times \bS^1$, $\ve=0.1$, obeying the generalized acuteness property required for the Bellman type discretization \eqref{eqdef:HopfLax}. See also the control sets in Fig.~\ref{fig:freevspositive_introfig_full}. Center: likewise with $\cF_\ve^+$, $\ve=0.1$. Right: Coarse discretization of $\bS^2$ with 162 vertices, used in some experiments posed on $\bR^3 \times \bS^2$. Some acute stencils (in the classical Euclidean sense) shown in color.}
\label{fig:Stencils2DAndSphere}
\end{figure*}

\begin{figure*}
\centering
\includegraphics[height=3.8cm]{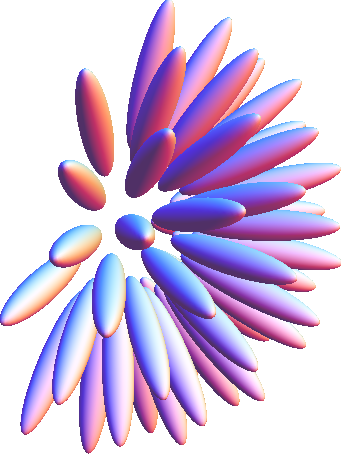}
\includegraphics[height=3.8cm]{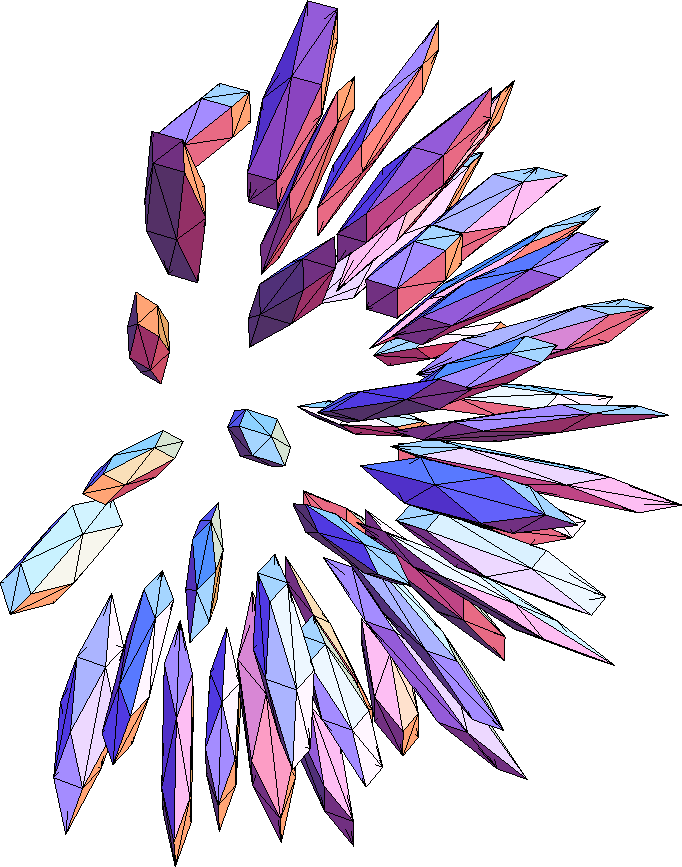}
\hspace{1cm}
\includegraphics[height=3.8cm]{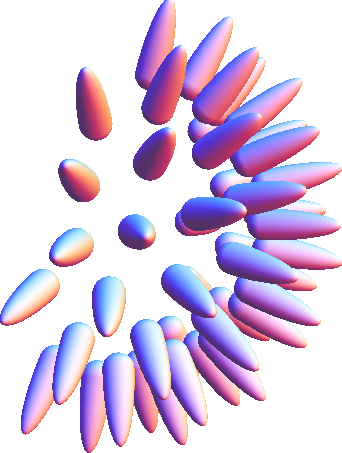}
\includegraphics[height=3.8cm]{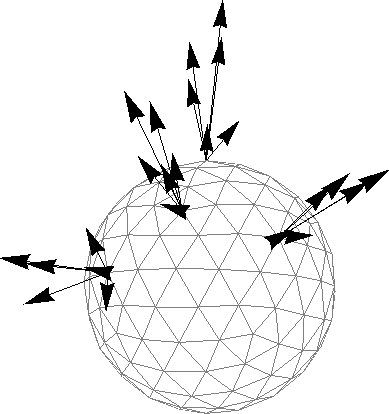}
\caption{Left: Slice in $\bR^3$ of the control sets \eqref{controlset} for $\cF_\ve$ on $\bR^3\times \bS^2$, $\ve=0.2$, for different orientations of $\bn$. Stencils obeying the generalized acuteness property required for Bellman type discretizations \eqref{eqdef:HopfLax}. Right: Slice in $\bR^3$ of the control sets for $\cF_\ve^+$, $\ve=0.2$. Offsets used for the finite differences discretization \eqref{eq:CausalFiniteDiff}, for four distinct orientations $\bn$.}\label{fig:3Dstencils}	
\end{figure*}

\begin{figure}[t!]
\centerline{
\includegraphics[width=\hsize]{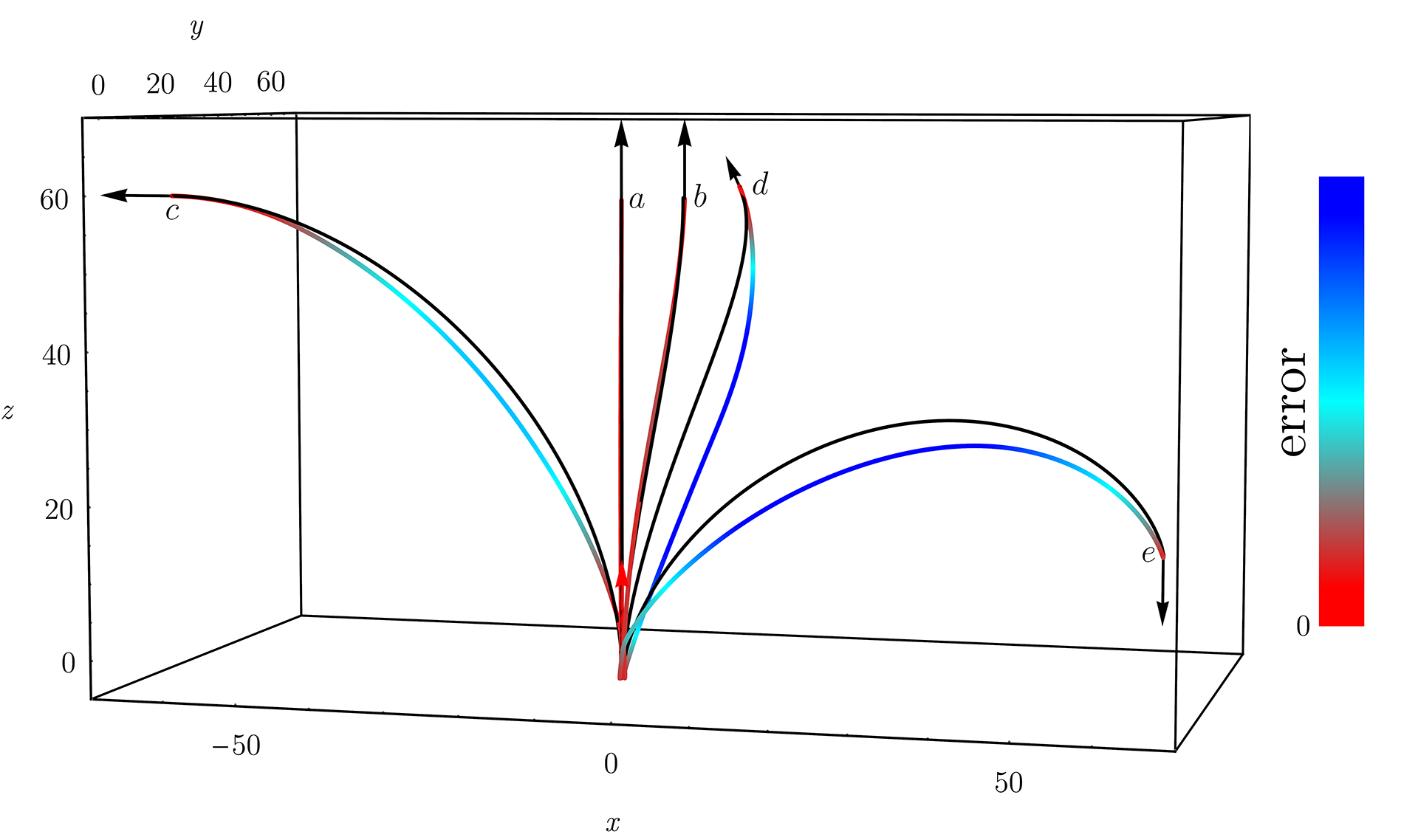}
}
\caption{Comparison of exact geodesics (black curves) and their numerical approximation (colored curves), with $\xi = 1/64$ and $\ve = .1$, for five different end conditions ($a = ((0,0,60),(0,0,1))$, $b = ((6.4,6.4,60),(0,0,1))$, $c = ((-60,0,60),(-1,0,0))$, $d = ((0,60,60),1/\sqrt{6}(-1,2,1))$, $e = ((60,60,10),(0,0,-1))$. The color indicates the error with the exact sub-Riemannian  geodesics \cite{duits_sub-riemannian_2016}.}\label{fig:curvecomparison}
\end{figure}\todo{E.4: typo}

\paragraph{Direct approximation of the Hamiltonian.} A new approach, not semi-Lagrangian, had to be developed for the Finsler metric 
$\cF_\ve^+$ in dimension $d=3$ due to our failure to construct viable (i.e.\ with a reasonably small number of reasonably small vertices) stencils obeying the generalized acuteness property in this case, see Fig. \ref{fig:3Dstencils}.
For manuscript size reasons, we only describe it informally, and postpone proofs of convergence for future work.

Let $\bn \in \bS^2$ and let $\ve>0$ be fixed.  Then one can find \emph{non-negative} weights and \emph{integral} vectors $(\rho_i,\bw_i) \in (\bR_+\times \bZ^3)^6$, such that for all $\bv \in \bR^3$
\begin{equation}
\label{eq:Vor1Eps}
	\sum_{1 \leq i \leq 6} \rho_i (\bw_i\cdot\bv)^2 = (\bn\cdot\bv)^2 + \ve^2 \|\bn \times \bv\|^2.
\end{equation}
A simple and efficient construction of $(\rho_i, \bw_i)_{i=1}^6$, relying on the concept of obtuse superbase of a lattice, is in \cite{Fehrenbach:2013ut} described and used to discretize anisotropic diffusion PDEs.
One may furthermore assume that $(\bn, \bw_i) \geq 0$ for all $1 \leq i \leq 6$, up to replacing $\bw_i$ with its opposite. Then 
\begin{equation}\label{eq:CausalFiniteDiff}
\begin{array}{l}
	\sum_{1 \leq i \leq 6} \rho_i (\bw_i\cdot\bv)_+^2 \approx (\bn\cdot\bv)_+^2,\\ 
	(\bn \cdot \nabla_{\bR^3} U(\bp))_+^2  \approx \\
\frac 1 {h^2}\sum_{i=1}^6 \rho_i (U(\bx,\bn)-U(\bx-h\bw_i,\bn))_+^2,
\end{array}
\end{equation}
up to respectively an $\cO(\ve^2)\|\bv\|^2$ and $\cO(\ve^2+h)$ error.
Following \cite{Rouy:2006ji}, we design a similar upwind discretization of the angular part of the metric
\begin{equation}
\label{eq:CausalFiniteDiffS2}
	\|\nabla_{\bS^2} U(\bp)\|^2 \approx (\delta_\theta U(\bp))^2 + \frac 1 {\sin^2 \theta} (\delta_\vp U(\bp))^2,\\
\end{equation}
where $\delta_\theta U(\bp)$, and likewise $\delta_\vp U(\bp)$, is defined as
\begin{align*}
 \delta_\theta U(\bp) := \frac 1 h \max \{0, &U(\bx,\bn)- U(\bx,\bn(\theta+h, \vp)),\\
	& U(\bx,\bn)-U(\bx,\bn(\theta-h, \vp))\}.
\end{align*}
We denoted by $\bn(\theta, \vp) := (\sin \theta \cos \vp, \sin \theta \sin \vp, \cos \theta)$ the parametrization of $\bS^2$ by Euler angles $(\theta, \vp) \in [0,\pi] \times [0, 2 \pi]$.
Combining \eqref{eq:CausalFiniteDiff} and \eqref{eq:CausalFiniteDiffS2}, one obtains an approximation  of $\cF_0^{+*}(\bp, \diff U(\bp))^2$, within $\cO(\ve^2+r(\ve)h)$ error for smooth $U$, denoted $\gF_\ve U(\bp)$. We denoted by $r(\ve) := \max_{i=1}^6 |\bw_i|$ the norm of the largest offset appearing in \eqref{eq:Vor1Eps}, since these clearly depend on \todo{R5.3: typo} $\ve$.
Importantly, $\gF_\ve U(\bp)$ only depends on positive parts of finite differences $(U(\bp)-U(\bq))_+$, hence the system $\gF_\ve U(\bp)=1$ can be solved using the fast-marching algorithm, as shown in \cite{Rouy:2006ji}. The convergence analysis of this discretization, as the grid scale $h$ and tolerance $\ve$ tend to zero suitably, is postponed for future work, see \cite{mirebeau_voronoi_2017,mirebeau_curvature_2017}. 

Note that this approach could also be applied in dimension $d=2$, and to the symmetric model $(\bM, d_{\cF_{\ve}})$ featuring a reverse gear.
We present only a single assessment of the numerical performance of our method, see Fig. \ref{fig:curvecomparison}. We compare numerically obtained shortest paths with exact SR geodesics for a small number of end points, that correspond to various types of curves. For fair end conditions (a, b, c) the numerical curves are close to \todo{R1: typo} the exact curves. For very challenging end conditions inducing torsion (d) or extreme curvature (e) the curves are further from the exact SR geodesics. An extensive evaluation of the performance of the numerics is left for future work.

\section{Applications}
\label{sec:Applications}

To show the potential of anisotropic fast marching for path-tracing in 2D and 3D (medical) images we performed experiments on each of the datasets in Fig. \ref{fig:introfig_challenges}:
\begin{itemize}
\item a 2D toy example using a map of Centre Pompidou,
\item a 2D retinal image,
\item two synthetic Diffusion-weighted Magnetic Resonance Imaging (dMRI) datasets, with different bundle configurations.
\end{itemize}
We use the 2D datasets to point out the difference in results when using the metric $\cF_\ve$ and $\cF_\ve^+$, and to explain the role of the keypoints when using $\cF_\ve^+$, that occur instead of (possibly unwanted) cusps.

On the synthetic dMRI datasets we present the first application of our methods to this type of data. We present how a cost function can be extracted from the data, and how this leads to correct tracking of bundles, similar to the 2D case. The benefits of anisotropic metrics compared to isotropic metrics are demonstrated by performing backtracking for various model parameter variations.

The experiments were performed using an anisotropic FM implementation written in C++, for $d = 2$ described in \cite{mirebeau_anisotropic_2014}. Implementation details for $d = 3$ will be described in future work. Mathematica 11.0 (Wolfram Research, Inc., Champaign, IL) was used for further data analysis, applying Wolfram LibraryLink (Wolfram Research, Inc., Champaign, IL) to interface with the FM library.

\subsection{Applications in 2D}

\begin{figure*}[t]
\centerline{
\includegraphics[width=\hsize]{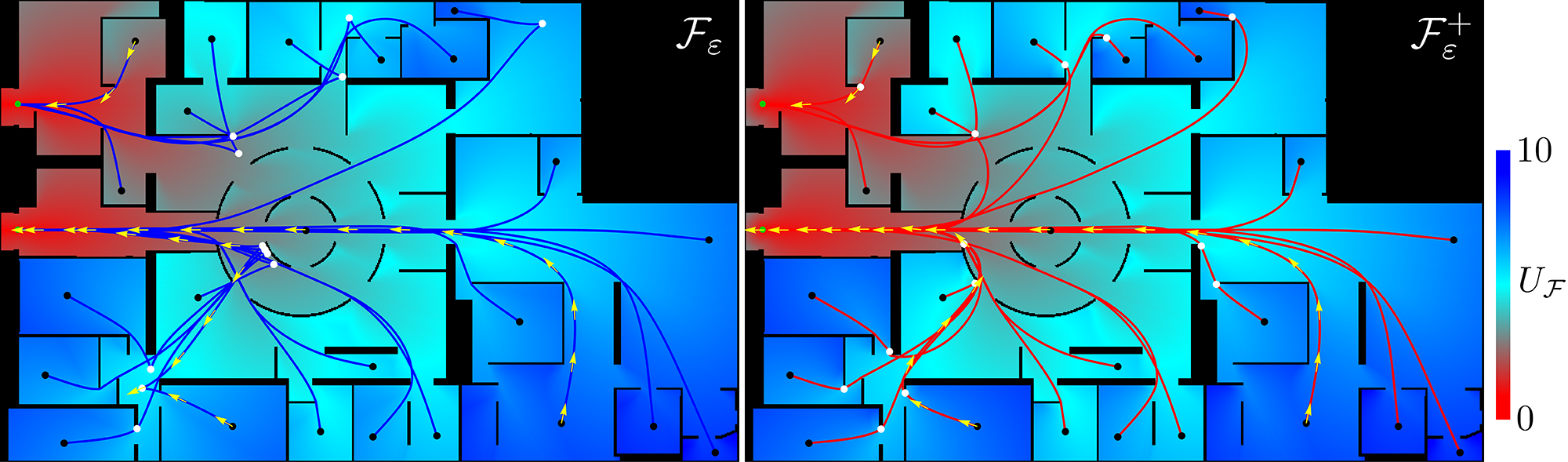}
}
\caption{ Comparison between the shortest paths from end points (black) to one of the exits (green) in a model map of Centre Pompidou, for cars with (left, blue lines) and without (right, red lines) reverse gear. The yellow arrows indicate the orientation of the curve. The background colors show the distances at each position, minimized over the orientation. White points left indicate the cusps, white points right indicate the (automatically placed) keypoints where in-place rotations take place.}
\label{fig:results_centrepompidou}
\end{figure*}

\begin{figure*}[t]
\centerline{
\includegraphics[width=\hsize]{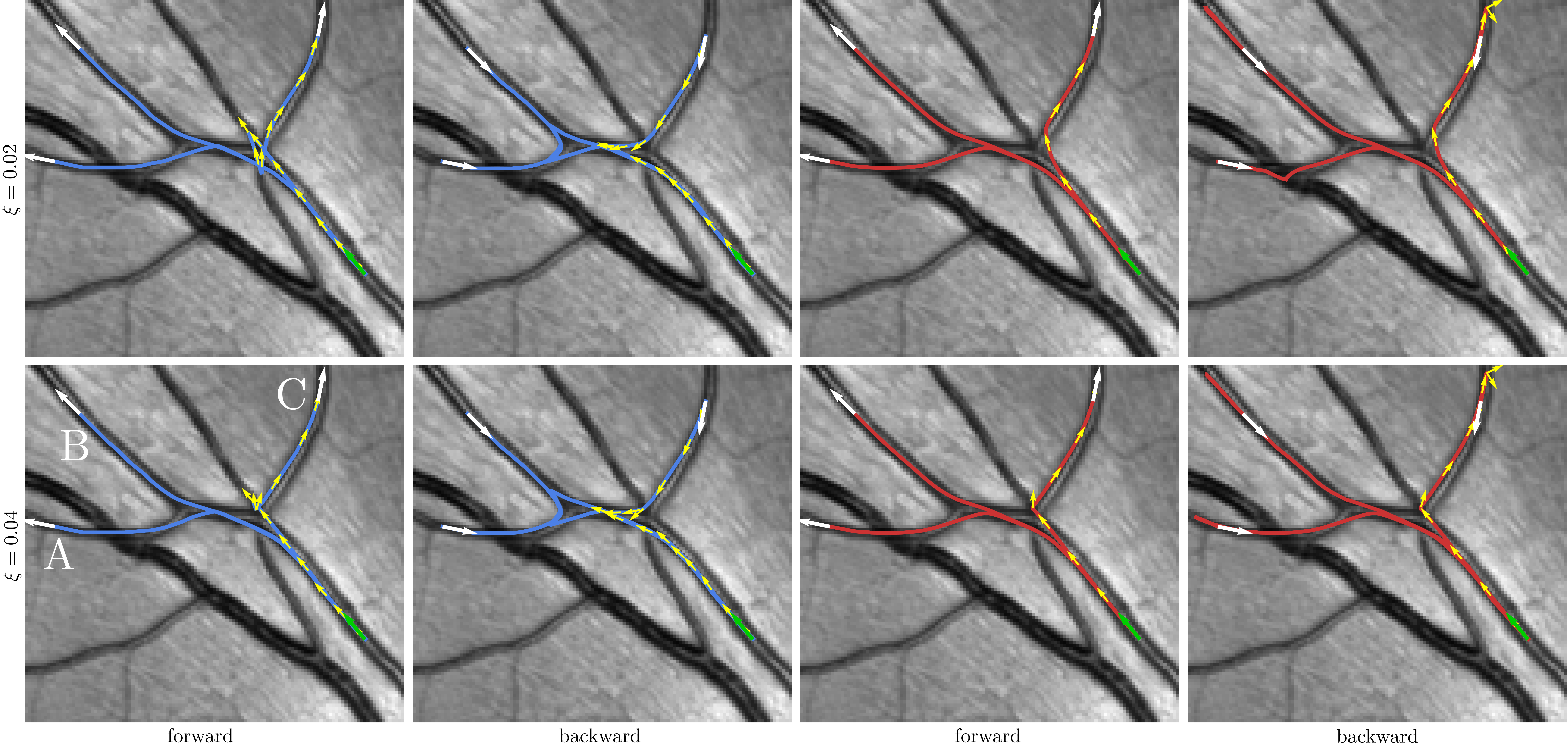}
}
\caption{Left: SR geodesics (in blue) in $(\mathbb{M},d_{\mathcal{F}_{\ve}})$ with given boundary conditions (both forward and backward).
Right: SR geodesics (in red) in $(\mathbb{M},d_{\mathcal{F}_{\ve}^+})$ with the same boundary conditions.
We recognize one end-condition case where on the left we get a cusp, whereas on the right we have a key-point (with in-place rotation) precisely at the bifurcation.
}
\label{fig:results_retina_complete}
\end{figure*}

\subsubsection{Shortest Path to the Exit in Centre Pompidou}
To illustrate the difference between the models with and without reverse gear and to show the role of the keypoints for non-uniform cost, we use a map of Centre Pompidou as a 2D image, see Fig. \ref{fig:results_centrepompidou}. The walls (in black) have infinite cost, everywhere else the cost is $1$. We place end points (black dots) in various places of the museum and look for the shortest path from those points to one of the two exits, regardless of the end orientation. Since there are now two exits, say at $\bp_0$ and $\bp_1$, the distance $U(\bp)$ of any point $\bp \in \bbM$ to one of the exits is given by

\begin{equation}
U_{\cF}(\bp) = \min \{ d_{\cF}(\bp_0,\bp), d_{\cF}(\bp_1,\bp)\}.
\end{equation}

We use a resolution of $N_x \times N_y \times N_o = 706 \times 441 \times 60$. The cost in this example is only dependent on position, but constant in the orientation. Moreover, we use $\cC_1 = \cC_2$ and $\ve = 1/10$.

On the left of Fig.~\ref{fig:results_centrepompidou} we see optimal paths (in blue) obtained using the Finsler 
metric $\cF = \cF_\ve$. The fast marching algorithm successfully connects all end points to one of the exits. Some of the geodesics have cusps, indicated with white points, resulting in backward motion on (a part of) the curve. The colors show the distance $U_{\cF_\ve}$ as above, at each position minimized over the orientations.

On the right, the optimal paths using the asymmetric Finsler 
metric $\cF = \cF_\ve^+$ are shown in red. The curves no longer exhibit cusps, but have in-place rotations (white dots) instead. These keypoints occur in this example on corners of walls. (Due to the fact that $\ve$ is small but nonzero, there can still be small sideways motion.)
The shortest paths for this model are successions of sub-Riemannian geodesics and of in place rotations, which can be regarded as reinitializations of the former: the orientation is adapted until an orientation is found from which the path can continue in an optimal sub-Riemannian way. 

We stress that the fast marching algorithm has no special treatment for keypoints, which are only detected in a post-processing step. We observe that keypoints are automatically positioned at positions where it makes sense to have an in-place rotation. Small differences in the distance maps between $U_{\cF_\ve}$ left and $U_{\cF_\ve^+}$ right can be observed: the constrained model usually has a slightly higher cost right around corners.

\subsubsection{Vessel Tracking in Retinal Images}
Another application is vessel tracking in retinal images, for which the model with reverse gear and the fast-marching algorithm have shown to be useful in \cite{bekkers_pde_2015,sanguinetti_sub-riemannian_2015}. Although the algorithm works fast and led to successful vessel segmentation in many cases, in some cases, in particular bifurcations of vessels, cusps occur. Fig.~\ref{fig:results_retina_complete} shows one such example on the left. The image has resolution $N_x \times N_y \times N_o = 121 \times 114 \times 64$. The cost is constructed as in \cite{bekkers_pde_2015}: the image is first lifted using cake wavelets \cite{duits_image_2006}, resulting in an image on $\mathbb{R}^2 \times \bS^1$. For the lifting and for the computation of the cost function from the lifted image, we rely on their parameter settings. We use $\cC_1 = \xi \cC_2$, with $\xi = 0.02$ (top) and $\xi = 0.04$, and $\ve = 0.1$. The orientations of the end conditions A, B and C (white arrows) are chosen tangent to the vessel, where we considered both the forward and the backward case. The vessel with end condition C is particularly challenging, since it comes across a bifurcation. For the tracking of this vessel, we indicated the orientation with yellow arrows.

The unconstrained model $(\bM,d_{\cF_\ve})$, corresponding to the blue tracks on the left half of Fig. \ref{fig:results_retina_complete}, gives a correct vessel tracking for the forward end conditions of A and B, for both values of $\xi$. This is obviously the better choice than the backward cases. However, for end condition C, neither the forward or backward with neither values of $\xi$ gives a vessel tracking without cusps. On the other hand, if we use the constrained model $(\bM,d_{\cF^+_\ve})$, we obtain an in-place rotation or keypoint in the neighborhood of the bifurcation. Typically a higher value of $\xi$ brings these points closer to the bifurcation. Taking the backward end conditions in combination with this model, we see in some cases that end locations are first passed by the vessel tracking algorithm, until it reaches a point where in-place rotation is cheaper, and then returns to the end position.

\begin{figure*}[t]
\centerline{
\includegraphics[width=\hsize]{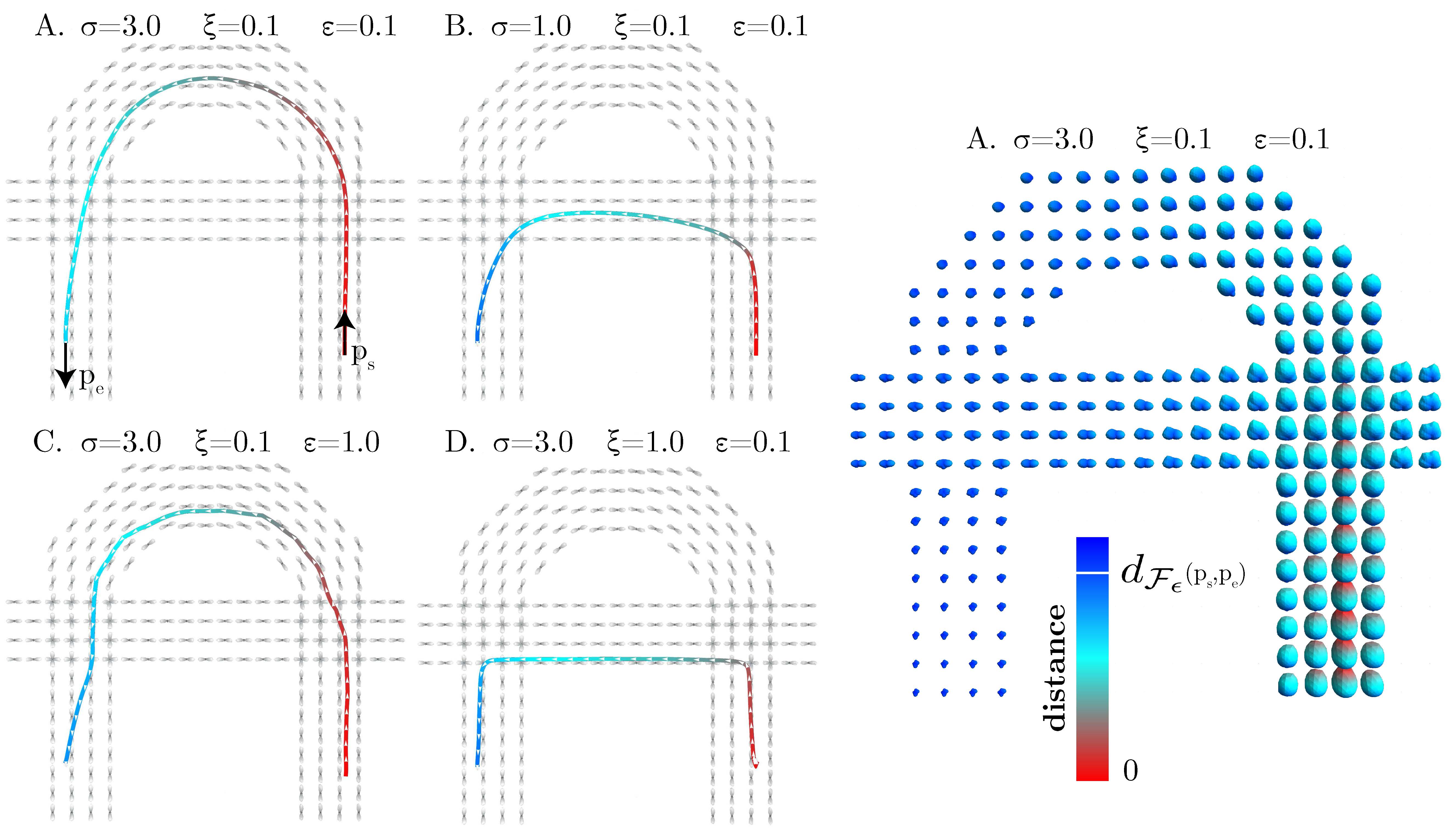}
}
\caption{Comparison of the results of backtracking on a 2D plane in a synthetic dMRI dataset on $\mathbb{M}=\R^{3}\times \mathbb{S}^2$. In case A the default parameters for $\sigma$, $\xi$ and $\ve$ are applied resulting in a global minimizing geodesic (left) and its corresponding distance map (right). Case B reflects the influence of the data-term $\sigma$. Case C reflects the isotropic Riemannian case. Case D reflects a high cost for moving spatially and results in curves that resemble a piecewise linear curve. The distance map is illustrated using a glyph visualization in which the size of the glyph corresponds to $exp(-d_{\mathcal{F}_\ve}(\bp_s,\bp_e)/s)^p$ where $\bp_s$ is the seed location, $\bp_e$ is a location on a glyph, and $s$ and $p$ are chosen based on visualization clarity.  }
\label{fig:results_2Din3D}
\end{figure*}

\subsection{Application to Diffusion-Weighted MRI Data}
DW-MRI is a magnetic resonance technique for non-invasive measurement of water diffusion in fibrous tissues \cite{Mori2007}.
In the brain, diffusion is less constrained parallel to white matter fibers (or axons) than perpendicular to them, allowing us to infer the paths of these fibers.
The diffusion measurements are distributions $(\by, \bn) \mapsto U(\by,\bn)$ within the manifold $\bbM$ for $d=3$.
From these measurements a fiber orientation distribution (FOD) can be created, yielding a probability of finding a fiber at a certain position and orientation \cite{Tuch:2004}.

Backtracking is performed through forward Euler integration of the backtracking PDE involving the intrinsic gradient, following Theorem~\ref{th:Backtracing} and Eq.~\!(\ref{btsimple}) and Eq.~\!(\ref{simple2}). The spatial derivative was implemented as a first-order Gaussian derivative. The angular derivatives are implemented by a first-order spherical harmonic derivative. The latter has the key advantage that in a spherical harmonic basis exact analytic computations can be done. Here, one must rely on two-fold recursions in \cite[Lemma 2 \& 4]{eshagh_alternative_2010}, so that the poles due to a standard Euler angle parametrization of $\bS^{2}$ do not appear in exact recursions of Legendre polynomials!

If data-driven factors $\mathcal{C}_{1}$ and $\mathcal{C}_{2}$ come in a spherical sampling or if one wants to work in a spherical sampling (e.g. higher order tessellation of the icosahedron) in a fast-marching method, then \todo{R1: typo} one can easily perform the pseudo-inverse of the discrete inverse spherical harmonic transform, where one typically keeps the number of spherical harmonics very close to the number of spherical sampling points, so that maximum accuracy order is maintained for computing angular derivatives in the intrinsic gradient descent of Theorem~\ref{th:Backtracing}.

\subsubsection{Construction of the Cost Function}

The synthetic dMRI data is created by generating/simulating a Fiber Orientation Density (FOD) of a desired structure. There are sophisticated methods for this, e.g. \cite{close_software_2009,caruyer_phantomas:_2014}, but evaluation on phantom data constructed with these tools is left for future work. Here we use a basic but practical method on two simple configurations of bundles in $\mathbb{R}^3$, the ones on the bottom row in Fig. \ref{fig:introfig_challenges}. In each voxel inside a bundle, we place a spherical $\delta$-distribution, with the peak in the orientation of the bundle. We convolve each $\delta$-distribution with an FOD kernel that was extracted from real dMRI data and is related to the dMRI signal measured in a voxel with just a single orientation of fibers. Spherical rotation of the FOD kernel is done in the spherical harmonics domain by use of the Wigner D-matrix to prevent interpolation issues. We compose from all distributions an FOD function $W: \bbM \to \R^+$. This function evaluates to high values in positions/orientations that are inside and aligned with the bundle structure.

We use the FOD $W$ to define the cost function $\frac{1}{1+\sigma} \leq \cC \leq 1$ via
\begin{align*}
	\cC(\bp) &= \frac{1}{1 + \sigma \left| \frac{W_+(\bp)}{\|W_+ \|_\infty} \right|^p}
\end{align*}
where $\sigma \geq 0$, $p \in \mathbb{N}$, with $\| \cdot \|_\infty$ the sup-norm and
$W_+(\bp) = \operatorname{max}\{0,W(\bp)\}$.
The cost function $\cC$ induces the following spatial and angular cost functions $(\cC_1, \cC_2)$: 
\begin{equation*}
	\cC_1(\bp) = \xi \cC(\bp), \qquad 	\cC_2(\bp) = \cC(\bp).
\end{equation*}
The implementation of nonuniform cost is comparable to the application of vessel tracking in retinal images in $d=2$ by Bekkers et al.~\cite{bekkers_pde_2015}.

\subsubsection{Influence of model parameters}

The first synthetic dataset consists of a curved and a straight bundle (tube)\todo{E.3: terminology}, which cross at two locations as shown in Fig. \ref{fig:results_2Din3D}.
The experiments using metric $\cF_\ve$ demonstrate the effect of the model parameters on the geodesic back-traced from the bottom-left to the seed location at the bottom-right of the curved bundle\todo{E.3: terminology}.
A distance map is computed for parameter configuration A (Fig. \ref{fig:results_2Din3D}, right) in which suitable values are used for the data-term $\sigma$, and the fast-marching parameters $\xi$ and $\ve$. Furthermore, fixed values are used for data sharpening $p=3$, spatial smoothing $\sigma s=0.5$, forward-Euler integration step size $\delta t=0.04$, and a gridscale of 1. 
By use of these parameters the global minimizing geodesic (Fig. \ref{fig:results_2Din3D}.A, left) is shown to take the longer, curved route.
In parameter configuration B the data-term $\sigma$ is lowered, which creates a geodesic that is primarily steered by internal curve-dependent costs and is shown to take the shortcut route (Fig. \ref{fig:results_2Din3D}.B).
Setting $\ve=1$ in configuration C leads to a Riemannian case where the geodesic resembles a piecewise linear curve.
In configuration D the relative cost of spatial movement relative to angular movement is high, leading to geodesics with shortcuts.

We conclude that configuration $\textbf{A}$ with a relatively strong data term, large bending stiffness ($\xi^{-1}=10$), and a nearly SR geometry ($\ve=0.1$) avoids unwanted shortcuts.

\subsubsection{Positive control constraint}

For the application of FM in dMRI data it is desirable that the resulting geodesic is not overly sensitive to the boundary conditions, i.e. the placement and orientation of the geodesic tip. Furthermore, since neural fibers do not form cusps, these are undesirable in the backtracking results. In Fig. \ref{fig:results_2Dpluscomparison} the backtracking results are shown for the cases without reverse gear $\cF_\ve^+$ (top) and the model with reverse gear $\cF_\ve$ (bottom). The distance map for $\cF_\ve^+$ was computed by the iterative method implementing the forward Reeds-Shepp car, while for $\cF_\ve$ the FM method was used.

We conclude that without the positive control constraint, small changes in tip orientation cause large variations in the traced geodesic in the metric space 
$(\mathbb{M},d_{\mathcal{F}_{\ve}})$, whereas the traced geodesic in the quasi-metric space 
$(\mathbb{M},d_{\mathcal{F}_{\ve}^+})$ is both more stable and more reasonable.

\begin{figure*}[t]
\centerline{
\includegraphics[width=0.6\hsize]{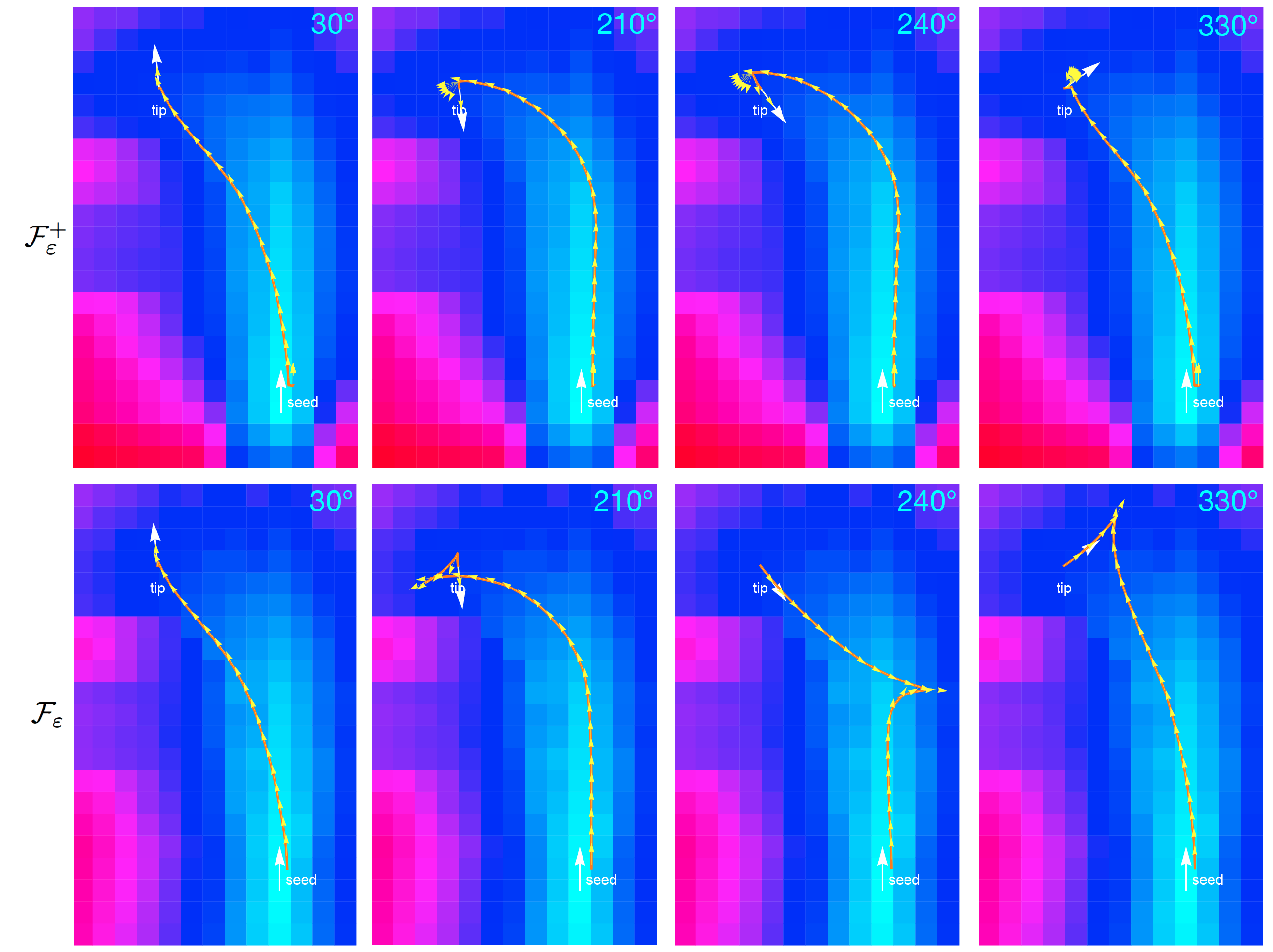}
}
\caption{Backtracking of minimizing geodesics of the model $(\mathbb{M},d_{\cF_\ve^+})$ without reverse gear (top) and the model with reverse gear
$(\mathbb{M},d_{\cF_\ve})$ (bottom) using the model parameters of configuration A ($\sigma=3.0$, $\xi=0.1$ and $\ve=0.1$) for various end conditions. }
\label{fig:results_2Dpluscomparison}
\end{figure*}

\subsubsection{Robustness to neighboring structures}
A pitfall of methods that provide globally minimizing curves using a dataterm is that dominant structures in the data attract many of the curves, much like the highway usually has the preference for cars rather than local roads. This phenomenon is to a certain extent unwanted in our applications, and we illustrate with the following example that it can be circumvented using a sub-Riemannian instead of Riemannian metric. We use the dataset as introduced in Fig. \ref{fig:introfig_challenges}. It consists of one bundle that has torsion (green), that crosses with another bundle (blue), and a third bundle (red) that is parallel with the first in one part. The cost in these bundles is constructed in the same way as above, but now the cost in the red bundle is twice as low as in the other bundles. A small part of the data is visualized on the left of Fig. \ref{fig:results_3Dbundles_complete}. This data is used to construct the cost function as explained above.

The resolution of the data is $N_x \times N_y \times N_z \times N_o = 32 \times 32 \times 32 \times 162$. Again we use $\cC_1 = \xi \cC_2 = \cC$, with $\xi = 0.1$. To have comparable parameters as in the previous experiment, despite increasing the amplitude in one of the bundles by a factor $2$, we choose to construct the cost using parameter $p = 3$, and $\sigma = 3 \cdot 2^p = 24$. From various positions inside the green, blue and red bundle, the shortest paths to the end of the bundles computed by the FM algorithm nicely follow the shape of the actual bundles, when we choose $\ve = .1$ small, corresponding to an almost SR geodesic. This is precisely what prevents the geodesic in the green bundle to drift into the (much cheaper) red bundle. We show on the right in Fig. \ref{fig:results_3Dbundles_complete} that choosing $\ve = 1$, corresponding to having an isotropic Riemannian metric, this unwanted behavior can easily occur.

We conclude that the SR geodesics in $(\bbM = \R^3 \times \bS^2, d_{\cF_\ve})$ with $\ve \ll 1$, are less attracted to parallel, dominant structures than isotropic Riemannian geodesics.

\begin{figure*}[t]
\centerline{
\includegraphics[width=\hsize]{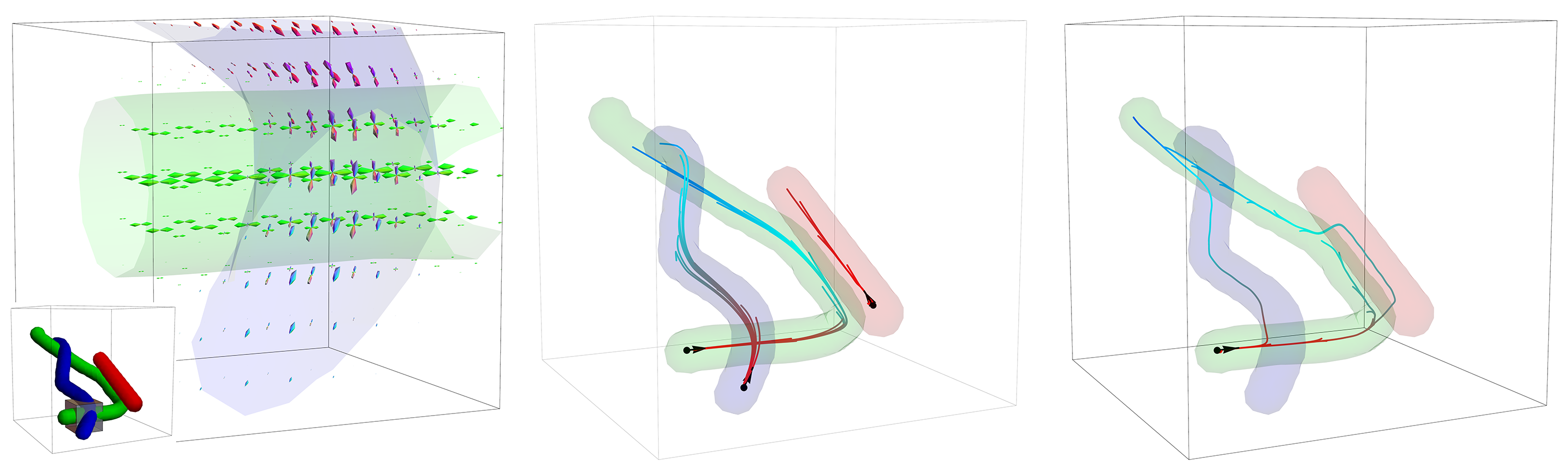}
}
\caption{Left: 3D configuration of bundles and a visualization of part of the synthetic dMRI data. Middle: backtracking of geodesics in $(\mathbb{M}, d_{\mathcal{F}_{\ve}})$ from several points inside the curves to end points of the bundle is successful when using $\ve = 0.1$. Right: when using $\ve = 1$, the dominant red bundle can cause the paths from the green bundle to deviate from the correct structure.}
\label{fig:results_3Dbundles_complete}
\end{figure*}

\section{Conclusion and Discussion}
\label{sec:Conclusion}

We have extended the existing methodology for modelling and solving the problem of finding optimal paths for a Reeds-Shepp car to 3D and to a case without reverse gear. We have shown that the use of the constrained model leads to more meaningful shortest paths in some cases and that the extension to 3D has opened up the possibility for tractography in dMRI data.

Instead of using a hard constraint on the curvature as in the original paper by Reeds and Shepp \cite{reeds_optimal_1990}, 
we used symmetric and asymmetric Finsler metrics. We have introduced these metrics, $\cF_0$ and $\cF_0^+$, for $d  = 2, 3$, such that they allow for curves that have a spatial displacement proportional to the orientation, with a positive proportionality constant in the case of $\cF_0^+$.

We have captured theoretically some of the nature of the distance maps and geodesics following from the new constrained model. We have shown in Thm.~\ref{th:Controllability} that both models are globally controllable, but only the unconstrained model is also locally controllable.

The sub-Riemannian and sub-Finslerian nature is difficult to capture numerically. To this end, we introduced approximating Finsler 
metrics $\cF_\ve$ and $\cF_\ve^+$, that do allow for numerical approaches. We have shown in Thm.~\ref{th:ReedSheppCV} that as $\ve \rightarrow 0$, the distance map converges pointwise and the geodesics converge uniformly, implying that for sufficiently small $\ve$ we indeed have a reasonable approximation of the $\ve = 0$ case.

We have analyzed cusps in the metric space 
$(\mathbb{M},d_{\mathcal{F}_{0}})$ and keypoints in the quasi-metric space 
$(\mathbb{M},d_{\mathcal{F}_{0}^+})$ which occur on the interface surface $\partial \mathbb{M}_{\pm}$ given by (\ref{transitionsurface}). The analysis, for uniform costs, is summarized in Thm.~\ref{th:CuspsAndRotations}. We have shown that cusps are absent in $(\mathbb{M},d_{\mathcal{F}_{\ve}})$ for $\ve>0$, that keypoints in $(\mathbb{M},d_{\mathcal{F}_{0}^+})$ occur only on the boundary, and we provided analysis on how this happens.
In Thm.~\ref{th:Backtracing} we have shown how minimizing geodesics in $(\bbM, d_{\cF_\ve})$ and $(\bbM, d_{\cF^+_\ve})$ can be obtained from the distance maps with an intrinsic gradient descent method.

To obtain solutions for the distance maps and optimal paths, we used a Fast-Marching method. By formulating an equivalent problem to the minimization problem for optimal paths in the form of an eikonal equation, the FM method can be used using specific discretization schemes. We briefly compared the numerical solutions using $\cF_\ve$ with $\ve \ll 1$ with the exact sub-Riemannian geodesics in SE(2) with uniform cost, which showed sufficient accuracy for not too extreme begin and end conditions.

To show the use of our method in image analysis, we have tested it on two 2D problems and two 3D problems. All four experiments confirm that the combination of the eikonal PDE formulation, the Fast-Marching method and the construction of the non-uniform cost from the images, results in geodesics that follow the desired paths. From the experiment on an image of Centre Pompidou, with constant, finite cost everywhere except for the walls, it followed that instead of having cusps when using the Finsler metric 
$\cF_\ve$, we get keypoints (in-place rotations) when using $\cF_\ve^+$. These keypoints turn out to be located on logical places in the image. On the 2D retinal image we showed that the Finsler 
 metric $\cF_\ve^+$ gives a new tool for tackling vessel tracking through bifurcations. We see that keypoints appear close to the bifurcation, leading to paths that more correctly follow the data.

The basic experiments on 3D
show advantages of the model $(\mathbb{M},d_{\mathcal{F}_{\ve}})$ with $0 < \ve \ll 1$ over the model $(\mathbb{M},d_{\mathcal{F}_{1}})$ in the sense that
the minimizing geodesics better follow the curvilinear structure and deal with crossings and nearby parallel bundles (even if torsion is present).
Furthermore, we have shown the advantage of model $(\mathbb{M},d_{\mathcal{F}_{\ve}^+})$ with $0 < \ve \ll 1$, compared to $(\mathbb{M},d_{\mathcal{F}_{\ve}})$ in terms of stability, with keypoints instead of cusps.

The strong performance of the Reeds-Shepp car model in 2D vessel tracking and positive first results on artificial dMRI data, encourages us to pursue a more quantitative assessment of the performance in both 3D vessel tracking problems and in actual dMRI data. Such 3D vessel tracking problems are encountered in for example Magnetic Resonance Angiography.
In future work we will elaborate on the implementation and evaluation of the fast-marching and the iterative PDE implementation of App. \ref{app:iterative}. Furthermore, we aim to integrate locally adaptive frames \cite{duits_locally_2016} into the Finsler metrics $\mathcal{F}_{\ve}$, $\mathcal{F}_{\ve}^+$, for
a more adaptive vessel/fiber tracking.

\section{Acknowledgements}

The authors gratefully acknowledge dr.~G.R.~Sanguinetti 
for fruitful discussion and ideas leading up to this article. We thank E.J. Bekkers for his assistance with and suggestions for Fig. \ref{fig:results_retina_complete}. The research leading to the results of this article has received funding from the European Research Council under the European Community’s 7th Framework Programme (FP7/20072014)/ERC grant agreement No. 335555 (Lie Analysis). This work was partly funded by ANR grant NS-LBR. ANR-13-JS01-0003-01.

\appendix

 \section{Well-posedness and convergence of the Reeds-Shepp models}
 \label{app:WellPosedness}

We introduce in \S \ref{subapp:ClosednessControllability} some general elements of control theory, which are specialized in \S \ref{subapp:ReedSheppSpecialization} to the Reeds-Shepp models and their approximations.

\subsection{Closedness of controllable paths}
\label{subapp:ClosednessControllability}

In this section, we introduce the notion of an admissible path $\gamma$ with respect to some controls $\gB$. We state in Theorem \ref{th:AdmissibleClosed} a closedness result, slightly generalizing the one from \cite{DaChen2016Thesis}, from which we deduce in Corollaries \ref{corol:ExistsMinOC} and \ref{corol:CVMinOC} an existence and a convergence result for a minimum time optimal control problem. The first ingredient of this approach is the notion of Hausdorff distance on a metric space.

\begin{definition}
Given a metric space $\bE$, we let $\cK(\bE)$ be the collection of non-empty compact subsets of $\bE$.
The distance function $d_A : \bE\to \bR_+$ and the Hausdorff distance $\cH(A,B)$, where $A,B \in \cK(\bE)$, are defined respectively by
	\begin{align*}
		d_A(x):= \inf_{y\in A} d(x,y), \
		\cH(A,B):= \max \{\sup_{x \in B} d_A(x), \sup_{x \in A} d_B(x)\}. 
	\end{align*}
\end{definition}

In the following, we fix a closed set $\bX$, contained in an Euclidean vector space $\bE$, or in a complete Riemannian manifold $\bM$.
In the applications considered in this paper, $\bX$ is of the form $\bX_0\times \bS^{d-1}$, where $\bX_0 \subset \bR^d$ is some image domain, see Fig.~\ref{fig:results_retina_complete}, or the set of accessible points in a map (which excludes the walls), see Fig.~\ref{fig:results_centrepompidou}.
The embedding space can be the vector space $\bE = \bR^d \times \bR^d$, which is an acceptable but rather extrinsic point of view, or the Riemannian manifold $\bM = \bR^d \times \bS^{d-1}$, equipped with the metric $\cG_\ve$  for some arbitrary but fixed $\ve>0$, see \eqref{importantmetric}.

We equip the collection of all Lipschitz paths $\Gamma := \Lip([0,1], \bX)$ with the topology of uniform convergence.
We will make use of Ascoli's lemma \cite{Ascoli,Arzela}, which states that any uniformly bounded and equicontinuous sequence of paths admits a converging sub-sequence. In our case the paths are Lipschitz with a common Lipschitz constant.

\begin{definition} \label{def:8}
	Given a normed vector space $V$, we denote by $\gC(V) \subset \cK(V)$ the collection of non-empty compact subsets of $V$, which are convex and contained in the unit ball.
\end{definition}

\begin{remark}\label{rem:convexsubsets}
The restriction to convex subsets is essential. For a uniformly converging sequence of Lipschitz functions $\gamma_n: [0,1] \to \bbM$ with limit $\gamma_*$, with $\dot{\gamma}_n(t) \in K$ for a.e. $t \in [0,1]$ and $K$ a compact set, we can deduce that $\dot{\gamma}_* \in \rm{Hull}(K)$, for a.e. $t \in [0,1]$. The convexity then guarantees that $\dot{\gamma}_* \in K = \rm{Hull}(K)$.
\end{remark}
\begin{definition}\label{def:famofcontrols}
	A family of controls $\cB$ on the set $\bX$ is an element of the set $\gB$ defined by 	\begin{itemize}
		\item If $\bX \subset \bE$ an Euclidean vector space, then $\gB := C^0(\bX,\gC(\bE))$.
		\item If $\bX\subset \bM$ a Riemannian manifold, then $\gB := \{\cB \in C^0(\bX, \cK(T\bM)) \; \vline \; \forall \bp \in \bX,\, \cB(\bp)\in \gC(T_\bp \bM)\}$.
	\end{itemize}
	In both cases, $\gB$ is equipped with the topology of locally uniform convergence.
\end{definition}

\begin{definition}
	A path $\gamma$ is $T\cB$-admissible, where $\gamma \in \Gamma$, $T\in \bR_+$ and $\cB\in \gB$, iff for almost every $t \in [0,1]$
	\begin{equation*}
		\dot \gamma(t)\in T \cB(\gamma(t)).
	\end{equation*}
\end{definition}
We denoted $T B := \{T \bv \; \vline \; \bv\in B\}$, where $T \in \bR_+$ and $B$ is a subset of a vector space. Note the potential conflict of notation with the tangent space $T \bM$ to the embedding manifold $\bM$, which should be clear from context.
If a path $\gamma$ is $T\cB$-admissible for some controls $\cB\in \gB$, then it must be $T$-Lipschitz.
The following result slightly extends, for our convenience, Corollary A.5 in \cite{DaChen2016Thesis}.

\begin{theorem}
\label{th:AdmissibleClosed}
	The set $\{ (\gamma,T,\cB)\in \Gamma \times \bR_+\times \gB \; \vline \; \\
	\hspace*{10em}\text{$\gamma$ is $T\cB$-admissible}\}$ is closed.
\end{theorem}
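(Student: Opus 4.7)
The plan is to prove sequential closedness: take a convergent sequence $(\gamma_n, T_n, \cB_n) \to (\gamma_*, T_*, \cB_*)$ in $\Gamma \times \bR_+ \times \gB$, where each $\gamma_n$ is $T_n \cB_n$-admissible, and show that $\gamma_*$ is $T_* \cB_*$-admissible. Since $\gamma_n$ is $T_n$-Lipschitz and $T_n \to T_*$, the limit $\gamma_*$ is automatically $T_*$-Lipschitz, so $\dot \gamma_*$ exists almost everywhere. It remains to verify the differential inclusion $\dot \gamma_*(t) \in T_* \cB_*(\gamma_*(t))$ at almost every $t$.

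Fix a Lebesgue point $t_0 \in (0,1)$ of $\dot \gamma_*$. First I treat the Euclidean case $\bX \subset \bE$. The strategy is to average the derivatives $\dot \gamma_n$ on a small interval $[t_0, t_0+h]$, exploiting three properties in turn: (i) continuity of $\cB_*$ at $\bp_0 := \gamma_*(t_0)$ together with uniform convergence of $\gamma_n \to \gamma_*$ and locally uniform convergence $\cB_n \to \cB_*$; (ii) convexity of the control sets (Definition \ref{def:8}); (iii) the Lebesgue differentiation theorem. Concretely, given $\eta > 0$, continuity of $\cB_*$ and local Hausdorff convergence $\cB_n \to \cB_*$ provide $h_0, n_0$ such that for $0 < h \leq h_0$, $n \geq n_0$, and all $s \in [t_0, t_0+h]$,
\begin{equation*}
T_n \, \cB_n(\gamma_n(s)) \subset T_* \, \cB_*(\bp_0) + B_\eta,
\end{equation*}
where $B_\eta$ is the Euclidean $\eta$-ball. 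Since each $\dot \gamma_n(s)$ lies in the left-hand set for a.e.\ $s$ by admissibility, and the right-hand set is convex, Jensen's inequality (for vector-valued integrals, cf.~Remark \ref{rem:convexsubsets}) yields
\begin{equation*}
\frac{\gamma_n(t_0+h) - \gamma_n(t_0)}{h} = \frac{1}{h}\int_{t_0}^{t_0+h} \dot \gamma_n(s) \, ds \in T_* \cB_*(\bp_0) + B_\eta.
\end{equation*}

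Passing $n \to \infty$ using the uniform convergence $\gamma_n \to \gamma_*$, the divided difference converges to $(\gamma_*(t_0+h) - \gamma_*(t_0))/h$, which therefore lies in the closure $T_* \cB_*(\bp_0) + \overline{B_\eta}$. Letting $h \to 0$ at the Lebesgue point gives $\dot \gamma_*(t_0) \in T_* \cB_*(\bp_0) + \overline{B_\eta}$, and finally sending $\eta \to 0$, using that $\cB_*(\bp_0)$ is compact (hence closed), we conclude $\dot \gamma_*(t_0) \in T_* \cB_*(\gamma_*(t_0))$, as required.

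For the Riemannian setting $\bX \subset \bM$, the same argument goes through by working in a normal coordinate chart centered at $\bp_0$. On a small neighborhood of $\bp_0$ the exponential map is a bilipschitz diffeomorphism onto its image, so Hausdorff closeness of tangent-space subsets transfers between the chart and the manifold with a controlled distortion that tends to one as $h \to 0$; this distortion is absorbed into the $\eta$-slack. The main subtlety to check carefully is that the locally uniform convergence of $\cB_n \to \cB_*$ in $\gB$ together with the uniform convergence of $\gamma_n \to \gamma_*$ and convergence $T_n \to T_*$ indeed yield the uniform inclusion on $[t_0, t_0+h]$ claimed above; this is where convexity of the control sets is essential, since otherwise only the convex hull of the limiting set could be reached by averaging, which would break the argument. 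Once this uniform inclusion is in place, the rest is standard Lebesgue differentiation.
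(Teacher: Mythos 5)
Your proof is correct, but it takes a genuinely different route from the paper's. Where you give a self-contained Lebesgue-differentiation argument for the Euclidean case (averaging $\dot\gamma_n$ over $[t_0,t_0+h]$, using convexity of $T_*\cB_*(\bp_0)+B_\eta$ to trap the divided difference, then letting $n\to\infty$, $h\to 0$, $\eta\to 0$), the paper simply invokes Corollary~A.5 of Chen's thesis \cite{DaChen2016Thesis} as a black box after first restricting to a compact $\bX'$ on which $\cB_n\to\cB_\infty$ uniformly. Your argument is in effect an unpacking of that cited result, and it is valid: the inclusion $T_n\cB_n(\gamma_n(s))\subset T_*\cB_*(\bp_0)+B_\eta$ for small $h$, large $n$, and $s\in[t_0,t_0+h]$ follows correctly from Hausdorff continuity of $\cB_*$, locally uniform convergence $\cB_n\to\cB_*$, uniform convergence $\gamma_n\to\gamma_*$, and the Lipschitz bound on $\gamma_*$. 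For the Riemannian case you also diverge: you use normal coordinates at $\bp_0$ and absorb the metric distortion into the $\eta$-slack, whereas the paper embeds a compact neighborhood of $\bX'$ into a Euclidean space via Whitney's theorem and pushes the paths and controls forward, thereby reducing \emph{globally} to the Euclidean case in one step. Both work, since the differential inclusion is a local condition in $t$. The trade-off is that your version is more elementary and self-contained (useful if one does not wish to rely on the cited corollary), while the paper's Whitney-embedding device is slightly cleaner because it performs the Euclidean reduction once for the whole path rather than chart-by-chart around each Lebesgue point, and it avoids having to quantify the chart distortion.
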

\begin{proof}
Let $(\gamma_n, T_n,\cB_n)$ be sequences of paths, times and controls converging to $(\gamma_\infty, T_\infty, \cB_\infty)$, and such that $\gamma_n$ is $T_n\cB_n$-admissible for all $n \geq 0$. Since the paths $\gamma_n$ are converging as $n \to \infty$, they lay in a common compact subset
$\bX'$ of the closed domain $\bX$, recall Remark \ref{rem:convexsubsets}. As a result, the restricted controls $\cB_n' :=
(\left.\cB_n\right|_{\bX'})$ are uniformly converging as $n \to \infty$.
In the case where $\bX \subset \bE$ a Euclidean space, applying Corollary A.5 in \cite{DaChen2016Thesis} to the sequence $(\gamma_n, T_n\cB'_n)$ we obtain that $\gamma_\infty$ is $T_\infty\cB_\infty$-admissible as announced.

In the case where $\bX \subset \bM$ a Riemannian manifold, an additional proof ingredient is required. Let $\bM'$ be an open neighborhood of $\bX'$ with compact closure in $\bM$, and let $\cI : \bM' \to \bE$ be an embedding (i.e.\ an injective immersion) with bounded distortion of the manifold $\bM'$ into a Euclidean space $\bE$ of sufficiently high dimension, which by Whitney's embedding theorem is known to exist. Define the set $\bX'' := \cI(\bX')$, the paths $\gamma''_n := \cI \circ \gamma_n$, and controls $\cB''_n(\cI(\bp)) := \diff\cI(\bp, \cB_n(\bp) )$ for all $\bp \in \bX'$ and $n\in \bN \cup \{\infty\}$. Applying again Corollary A.5 in \cite{DaChen2016Thesis} we obtain that $\gamma''_\infty$ is $T_\infty\cB''_\infty$ admissible, hence that $\gamma_\infty$ is $T_\infty\cB_\infty$-admissible as announced.
\end{proof}
In line with the identity \eqref{viewpoint}, we rely on the following definition where we rescale the time interval to $[0,1]$.
\begin{definition}
\label{def:MinTime}
	For any $\cB\in \gB$, $\bp, \bq\in \bX$, we let
	\begin{equation}
	\label{eqdef:MinTime}
	\begin{split}
		T_\cB(\bp, \bq) := \inf\{&T \geq 0 \; | \; \exists \gamma\in \Gamma, \, \gamma(0)=\bp,\, \gamma(1)=\bq, \\ &\hspace*{5em}\text{ and } \gamma \text{ is }T \cB \text{-admissible}\}.
	\end{split}
	\end{equation}
\end{definition}

\begin{corollary}
\label{corol:ExistsMinOC}
	If $\cB \in \gB$, $\bp,\bq\in \bX$ are such that $T_\cB(\bp, \bq) < \infty$, then the inf.\ \eqref{eqdef:MinTime} is attained.
\end{corollary}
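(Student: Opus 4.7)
The plan is a standard direct method of the calculus of variations: take a minimizing sequence, extract a uniformly convergent subsequence via Arzel\`a--Ascoli, and invoke the closedness result from Theorem~\ref{th:AdmissibleClosed} to certify that the limit is admissible with the limiting time.

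More concretely, let $T^* := T_\cB(\bp,\bq) < \infty$. By definition of the infimum there exists a sequence of times $T_n \downarrow T^*$ and of paths $\gamma_n \in \Gamma$ with $\gamma_n(0)=\bp$, $\gamma_n(1)=\bq$, and $\gamma_n$ is $T_n \cB$-admissible. Since each $\dot\gamma_n(t) \in T_n \cB(\gamma_n(t))$ with $\cB(\cdot)$ taking values in subsets of the unit ball (Definition~\ref{def:8}) and since the sequence $(T_n)$ is bounded, the paths are uniformly Lipschitz with common constant $\sup_n T_n < \infty$. In particular they are equicontinuous, and, being anchored at $\bp$ with uniformly bounded speed, they all lie in a common compact subset of $\bX$ (closedness of $\bX$ inside the ambient Euclidean space or complete Riemannian manifold is used here). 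Arzel\`a--Ascoli then yields a subsequence, still denoted $(\gamma_n)$, converging uniformly to some $\gamma_\infty \in \Gamma$.

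The limit path automatically satisfies $\gamma_\infty(0)=\bp$ and $\gamma_\infty(1)=\bq$ by continuity of evaluation at the endpoints. Now apply Theorem~\ref{th:AdmissibleClosed} to the triples $(\gamma_n, T_n, \cB) \to (\gamma_\infty, T^*, \cB)$, where the control family is constant equal to $\cB$ (so trivially convergent in $\gB$) and $T_n \to T^*$. The theorem gives that $\gamma_\infty$ is $T^* \cB$-admissible, which shows that the infimum in \eqref{eqdef:MinTime} is attained by $\gamma_\infty$ at time $T^*$.

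I expect the proof to be essentially routine given the machinery already in place; the only mild subtlety is the compact containment of the minimizing sequence when $\bX$ is unbounded, but this follows from the uniform Lipschitz bound and the fixed initial point. Convexity of the control sets, which was key to Theorem~\ref{th:AdmissibleClosed} (cf.\ Remark~\ref{rem:convexsubsets}), is already absorbed into the closedness statement, so no separate argument is needed here.
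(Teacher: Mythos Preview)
Your proof is correct and follows essentially the same approach as the paper: take a minimizing sequence of admissible paths (the paper indexes by $\ve$ with $(T+\ve)\cB$-admissible paths, you by $T_n\downarrow T^*$), observe the uniform Lipschitz bound, apply Arzel\`a--Ascoli, and invoke Theorem~\ref{th:AdmissibleClosed} to pass to the limit. Your write-up is slightly more detailed about compact containment and endpoint preservation, but the argument is the same.
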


\begin{proof}
	Let $T := T_\cB(\bp, \bq)$, and for each $0 < \ve\leq 1$ let $\gamma_\ve$ be a $(T+\ve)\cB$-admissible path from $\bp$ to $\bq$, which is thus $(T+1)$-Lipschitz. By Arzela-Ascoli's lemma \cite{Arzela,Ascoli} there exists a converging sequence of paths $\gamma_{\ve_n}\to \gamma_0$ as $n \to \infty$. The limit path $\gamma_0$ is $T\cB$-admissible by Theorem \ref{th:AdmissibleClosed}, and the result follows.
\end{proof}

\begin{corollary}
\label{corol:CVMinOC}
	For all $\ve \in [0,1]$ let $\cB_\ve \in \gB$. Assume that $\cB_\ve \to \cB_0$ as $\ve \to 0$, and that $\cB_\ve(\bp) \supset \cB_0(\bp)$ for all $\ve \geq 0$, $\bp\in \bX$. Then
	\begin{equation*}
		T_{\cB_\ve}(\bp,\bq) \to T_{\cB_0}(\bp,\bq), \quad \text{as } \ve \to 0.
	\end{equation*}
	Let $T_\ve := T_{\cB_\ve}(\bp,\bq)$ for each $\ve \geq 0$.
	Assume in addition that there exists a unique $T_0\cB_0$-admissible path $\gamma_0$ from $\bp$ to $\bq$, and for each $\ve>0$ denote by $\gamma_\ve$ an arbitrary path from $\bp$ to $\bq$ which is $(\ve+T_\ve)\cB_\ve$ admissible. Then $\gamma_\ve \to \gamma_0$ as $\ve \to 0$.
\end{corollary}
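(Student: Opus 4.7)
The plan is to establish the two convergences separately, in each case leveraging the closedness result of Theorem~\ref{th:AdmissibleClosed} together with the Arzela-Ascoli lemma mentioned in the preceding discussion. The monotonicity assumption $\cB_\ve(\bp) \supset \cB_0(\bp)$ will give one side of the time convergence essentially for free, while the reverse inequality and the path convergence are both consequences of a standard extraction-plus-closedness argument.

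First I would prove the upper bound $\limsup_{\ve \to 0} T_{\cB_\ve}(\bp,\bq) \leq T_{\cB_0}(\bp,\bq)$ by observing that any $T \cB_0$-admissible path is automatically $T \cB_\ve$-admissible, since $T \cB_0(\bp) \subset T \cB_\ve(\bp)$ pointwise. For the lower bound, I would set $T^* := \liminf_{\ve \to 0} T_{\cB_\ve}(\bp,\bq)$, assume $T^* < \infty$ (the alternative case being trivial), extract a sequence $\ve_n \to 0$ with $T_n := T_{\cB_{\ve_n}}(\bp,\bq) \to T^*$, and invoke Corollary~\ref{corol:ExistsMinOC} to obtain $T_n \cB_{\ve_n}$-admissible paths $\gamma_n$ from $\bp$ to $\bq$. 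Since $T_n$ is bounded, the $\gamma_n$ are uniformly Lipschitz, so by Arzela-Ascoli a subsequence converges uniformly to some $\gamma_*$ connecting $\bp$ and $\bq$. Theorem~\ref{th:AdmissibleClosed}, applied to the triples $(\gamma_n, T_n, \cB_{\ve_n}) \to (\gamma_*, T^*, \cB_0)$, yields that $\gamma_*$ is $T^* \cB_0$-admissible, whence $T_{\cB_0}(\bp,\bq) \leq T^*$. Combining the two inequalities gives the first claim.

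For the path convergence, I would note that each $\gamma_\ve$ is $(\ve + T_\ve)$-Lipschitz, and $\ve + T_\ve \to T_0$ by the first part, so the family is uniformly Lipschitz. Any sequence $\gamma_{\ve_n}$ therefore admits a uniformly convergent subsequence by Arzela-Ascoli, with some limit $\tilde\gamma$ from $\bp$ to $\bq$. Applying Theorem~\ref{th:AdmissibleClosed} to $(\gamma_{\ve_n}, \ve_n + T_{\ve_n}, \cB_{\ve_n}) \to (\tilde\gamma, T_0, \cB_0)$ shows that $\tilde\gamma$ is $T_0 \cB_0$-admissible, so by the uniqueness hypothesis $\tilde\gamma = \gamma_0$. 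Since every subsequence of $(\gamma_\ve)$ has a further subsequence converging to the same limit $\gamma_0$, the whole family converges uniformly to $\gamma_0$.

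The main potential obstacle is not a deep one: it is simply ensuring that Theorem~\ref{th:AdmissibleClosed} applies with all three entries varying simultaneously (path, time, and control), and that the paths $\gamma_n$ truly remain in a common compact subset of $\bX$ so that the locally uniform convergence $\cB_\ve \to \cB_0$ is enough — but this is immediate once the paths are shown to converge uniformly, since their images are then contained in a neighborhood of the limit image. Everything else is standard.
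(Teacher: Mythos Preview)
Your proposal is correct and follows essentially the same strategy as the paper: monotonicity of the control sets gives $T_\ve \leq T_0$, while Arzela--Ascoli combined with the closedness Theorem~\ref{th:AdmissibleClosed} yields the reverse inequality and, under uniqueness, the path convergence. The only noteworthy difference is organizational: you treat the time convergence and the path convergence in two separate passes (invoking Corollary~\ref{corol:ExistsMinOC} to produce exact minimizers for the former), whereas the paper argues both at once using only the given $(\ve+T_\ve)\cB_\ve$-admissible paths $\gamma_\ve$; your explicit use of the $\liminf$ and of the ``every subsequence has a further subsequence converging to $\gamma_0$'' principle is in fact a bit more careful than the paper's compressed presentation.
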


\begin{proof}
The inclusion $\cB_\ve(\bp) \subset  \cB_0(\bp)$, $\forall \bp \in \bM$, implies the inequality $T_\ve \leq T_0$, for all $\ve \geq 0$. Denoting $T_* := \limsup T_\ve$ as $\ve\to 0$, we thus observe that $T_* \leq T_0$. For the reverse inequality $T_* \geq T_0$, we apply Arzela-Ascoli lemma to the family of paths $(\gamma_\ve)_{0 < \ve \leq 1}$ which are $(T_0+1)$-Lipschitz \todo{E.4: typo} by construction, and obtain a converging subsequence of paths $\gamma_{\ve_n} \to \gamma_*$. Theorem \ref{th:AdmissibleClosed} implies the admissibility of $\gamma_*$ with respect to the controls $T_* \cB_0$. Thus $T_*\geq T_0$ but since $T_* \leq T_0$, we must have $T_*=T_0$, and $\gamma_* = \gamma_0$ by the uniqueness assumption. The result follows.
\end{proof}

More generally, if the infimum \eqref{eqdef:MinTime} is realized by a family $(\gamma_i)_{i \in I}$ of paths, then for any sequence $\ve_n\to 0$ one can find a subsequence such that $\gamma_{\ve_{\vp(n)}} \to \gamma_i$ as $n \to \infty$ for some $i \in I$.

\subsection{Specialization to the Reeds-Shepp models}
\label{subapp:ReedSheppSpecialization}

We begin this section by recalling, and slightly generalizing, the notion of Finsler metric introduced in \S \ref{subsec:Geometry}. We then prove that the Reeds-Shepp metrics $\cF_0$ and $\cF_0^+$ are indeed Finsler metrics in this sense.
\begin{definition}
\label{def:Metric}
	A metric on a complete Riemannian manifold $\bM$ is a map $\cF : T\bM \to [0,+\infty]$. With respect to the second variable, it must be $1$-homogeneous, convex, and bounded below by $\delta\|\cdot\|$, where $\delta$ is a positive constant. In terms of regularity, the sets $\cB_\cF(\bp) := \{\dot \bp \in T_\bp\bM \; | \; \cF(\bp, \dot \bp) \leq 1\}$ must be closed and depend continuously on $\bp \in \bM$ with respect to the Hausdorff distance on $T \bM$.
\end{definition}

The next proposition
is due to (\ref{viewpoint}).
\begin{proposition}
\label{prop:DistanceControlTime}
	With the notations of Definition \ref{def:Metric}, the sets $\bp\in \bM \mapsto \cB_\cF(\bp)$ form a family of controls on $(\bM, \delta\|\cdot\|)$. In addition for all $\bp,\bq\in \bM$
	\begin{equation*}
		d_\cF(\bp, \bq) = T_{\cB_\cF}(\bp,\bq).
	\end{equation*}
\end{proposition}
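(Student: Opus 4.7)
The proof splits into two independent parts matching the two claims.

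For the first claim, that $\bp \mapsto \cB_\cF(\bp)$ is a valid family of controls on the Riemannian manifold $(\bM, \delta\|\cdot\|)$ in the sense of Definition~\ref{def:famofcontrols}, I would verify each defining property of $\gC(T_\bp\bM)$ directly. Compactness of $\cB_\cF(\bp)$ follows because it is closed by assumption in Definition~\ref{def:Metric}, and bounded as a consequence of the coercivity bound $\cF(\bp,\dot\bp) \geq \delta\|\dot\bp\|$: any $\dot\bp \in \cB_\cF(\bp)$ satisfies $\|\dot\bp\| \leq 1/\delta$, so that $\cB_\cF(\bp)$ lies in the closed unit ball of the rescaled norm $\delta\|\cdot\|$. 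Convexity of $\cB_\cF(\bp)$ is immediate from the convexity of $\dot\bp \mapsto \cF(\bp,\dot\bp)$. Finally, continuity of $\bp \mapsto \cB_\cF(\bp)$ with respect to the Hausdorff distance is again part of the definition of a metric in Definition~\ref{def:Metric}.

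For the second claim, I would show $d_\cF(\bp,\bq) = T_{\cB_\cF}(\bp,\bq)$ by proving each inequality separately via a simple reparametrization argument, mirroring the equivalence already sketched in \eqref{viewpoint}. For $T_{\cB_\cF}(\bp,\bq) \leq d_\cF(\bp,\bq)$, I take any Lipschitz path $\gamma$ from $\bp$ to $\bq$ with $L := \Length_\cF(\gamma) < \infty$, and reparametrize it at constant $\cF$-speed to produce a normalized path $\tilde\gamma$ with $\cF(\tilde\gamma(t),\dot{\tilde\gamma}(t)) = L$ for a.e.\ $t \in [0,1]$. By $1$-homogeneity, $\dot{\tilde\gamma}(t) \in L\,\cB_\cF(\tilde\gamma(t))$, so $\tilde\gamma$ is $L\cB_\cF$-admissible, yielding $T_{\cB_\cF}(\bp,\bq) \leq L$, and taking the infimum over $\gamma$ gives the bound.

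For the reverse inequality $d_\cF(\bp,\bq) \leq T_{\cB_\cF}(\bp,\bq)$, I would take any $T\cB_\cF$-admissible path $\gamma$ from $\bp$ to $\bq$. By definition of admissibility and $1$-homogeneity of $\cF$ one has $\cF(\gamma(t),\dot\gamma(t)) \leq T$ for a.e.\ $t\in[0,1]$, hence $\Length_\cF(\gamma) \leq T$, giving $d_\cF(\bp,\bq) \leq T$; minimization over $T$ concludes. The main subtlety I anticipate is the reparametrization step in the first inequality: when $L = 0$ the path is constant and the argument is trivial, but when $L>0$ one must check that a Lipschitz path of finite $\cF$-length can genuinely be reparametrized at constant $\cF$-speed, which requires the standard device of composing with the (possibly generalized) inverse of the nondecreasing cumulative-length function $s(t) = \int_0^t \cF(\gamma(\tau),\dot\gamma(\tau))\,d\tau$, together with minor care at flat intervals of $s$. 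Aside from that technicality, both inequalities are essentially bookkeeping once $1$-homogeneity is exploited.
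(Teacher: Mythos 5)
Your proof is correct and follows essentially the same route as the paper, which simply invokes the chain of equalities in \eqref{viewpoint} (justified there by H\"older's inequality, time re-parametrization, and $1$-homogeneity of $\cF$). You have merely spelled out the time-reparametrization step and the membership of $\cB_\cF(\bp)$ in $\gC(T_\bp\bM)$ that the paper leaves implicit; both directions of the inequality are handled exactly as the paper intends.
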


\begin{proposition}	
\label{prop:ReedSheppContinuous}
	The Reeds-Shepp metrics $(\cF_\ve)_{0\leq \ve \leq 1}$ and $(\cF_\ve^+)_{0\leq \ve \leq 1}$ are indeed metrics in the sense of Definition \ref{def:Metric}, for any $\ve \in [0,1]$. The associated controls $\cB_\ve:= \cB_{\mathcal{F}_{\ve}}$, $\cB_\ve^+:= \cB_{\mathcal{F}_{\ve}^+}$ depend continuously on the parameter $\ve\in [0,1]$, and satisfy the inclusions $\cB_\ve(\bp) \subset \cB_{\ve'}(\bp)$ and $\cB_\ve^+(\bp) \subset \cB_{\ve'}^+(\bp)$ for any $\bp \in \bM$ and  $0 \leq \ve \leq \ve' \leq 1$.
\end{proposition}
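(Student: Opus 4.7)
The plan is to verify three things separately: (i) each of $\cF_\ve$ and $\cF_\ve^+$ is a metric in the sense of Definition~\ref{def:Metric} for every $\ve\in[0,1]$; (ii) the families of control sets are monotone in $\ve$; and (iii) they depend continuously on $\ve$ in the sense of locally uniform Hausdorff convergence. I expect the main obstacle to be the convergence at the singular endpoint $\ve=0$, where the limit metric takes infinite values and the approximating metrics do \emph{not} converge to it uniformly.

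For (i), I would check the axioms of Definition~\ref{def:Metric} at each fixed $\ve$. One-homogeneity in $\dot\bp$ is immediate from the explicit $2$-homogeneous form of $\cF_\ve^2$ and $(\cF_\ve^+)^2$. For convexity, $\cF_\ve^2$ is a positive semi-definite quadratic form in $\dot\bp$, making $\cF_\ve$ a semi-norm. In the case of $\cF_\ve^+$, the extra term $\cC_1(\bp)^2(\ve^{-2}-1)(\dot\bx\cdot\bn)_-^2$ is $2$-homogeneous, nonnegative, and convex in $\dot\bp$ (since $\dot\bp\mapsto\max\{0,-\dot\bx\cdot\bn\}=|(\dot\bx\cdot\bn)_-|$ is convex nonnegative, and $s\mapsto s^2$ is convex nondecreasing on $[0,\infty)$); hence $(\cF_\ve^+)^2$ is a convex, $2$-homogeneous, nonnegative function, whose sublevel sets coincide with those of $\cF_\ve^+$ and are therefore convex, which together with $1$-homogeneity and nonnegativity yields convexity of $\cF_\ve^+$ as its own gauge. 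Quantitative non-degeneracy $\cF_\ve,\cF_\ve^+\geq\delta\sqrt{\|\dot\bx\|^2+\|\dot\bn\|^2}$ then follows from $\ve^{-2}\geq 1$, from the orthogonal decomposition $\|\dot\bx\|^2=(\dot\bx\cdot\bn)^2+\|\dot\bx\wedge\bn\|^2$, and from $\cC_1,\cC_2\geq\delta$. Closedness of $\cB_\cF(\bp)$ and its continuous dependence on $\bp$ in Hausdorff distance follow for $\ve>0$ from joint continuity of $\cF$ in $(\bp,\dot\bp)$ combined with the uniform lower bound just established; for $\ve=0$ they follow from the closed and continuously $\bp$-varying nature of the linear constraint $\dot\bx\wedge\bn=0$, together with the half-space constraint $\dot\bx\cdot\bn\geq 0$ in the $\cF_0^+$ case.

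Claim (ii) is immediate: $\ve\mapsto\ve^{-2}$ is non-increasing on $(0,1]$, extended to $+\infty$ at $\ve=0$, so the maps $\ve\mapsto\cF_\ve(\bp,\dot\bp)$ and $\ve\mapsto\cF_\ve^+(\bp,\dot\bp)$ are non-increasing pointwise, and hence their sublevel sets grow with $\ve$.

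For (iii), continuity at an interior limit $\ve_0\in(0,1]$ is routine, since $\cF_\ve$ and $\cF_\ve^+$ depend smoothly on $\ve$, locally uniformly in $(\bp,\dot\bp)$. The delicate case is the limit $\ve_0=0$. Exploiting the inclusion $\cB_0(\bp)\subset\cB_\ve(\bp)$ from (ii), it suffices to bound $\sup_{\dot\bp\in\cB_\ve(\bp)}\mathrm{dist}(\dot\bp,\cB_0(\bp))$ locally uniformly in $\bp$. To this end, given $\dot\bp=(\dot\bx,\dot\bn)\in\cB_\ve(\bp)$, I would project the spatial velocity onto $\mathrm{span}(\bn)$ by setting $\dot\bp':=((\dot\bx\cdot\bn)\bn,\dot\bn)$; a direct computation gives $\cF_0(\bp,\dot\bp')^2=\cC_1(\bp)^2|\dot\bx\cdot\bn|^2+\cC_2(\bp)^2\|\dot\bn\|^2\leq\cF_\ve(\bp,\dot\bp)^2\leq 1$, so $\dot\bp'\in\cB_0(\bp)$, while $\|\dot\bp-\dot\bp'\|=\|\dot\bx\wedge\bn\|\leq\ve/\cC_1(\bp)\leq\ve/\delta$. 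For the $\cF_\ve^+$ family I would instead take $\dot\bp':=((\dot\bx\cdot\bn)_+\bn,\dot\bn)$, using the $(\ve^{-2}-1)(\dot\bx\cdot\bn)_-^2$ term in $(\cF_\ve^+)^2$ to conclude $|(\dot\bx\cdot\bn)_-|=O(\ve)$; then $\dot\bp'\in\cB_0^+(\bp)$ while $\|\dot\bp-\dot\bp'\|\leq\|\dot\bx\wedge\bn\|+|(\dot\bx\cdot\bn)_-|=O(\ve)$, uniformly on compacts in $\bp$ because $\cC_1\geq\delta$. This concludes the Hausdorff convergence $\cB_\ve\to\cB_0$, and likewise $\cB_\ve^+\to\cB_0^+$, as $\ve\downarrow 0$.
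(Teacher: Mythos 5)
Your overall decomposition --- (i) axioms of the metric, (ii) monotone inclusion, (iii) $\ve$-continuity --- is sound, and the key new content, the $\ve\downarrow 0$ convergence of control sets, is handled correctly: your orthogonal-projection map $\dot\bp\mapsto\dot\bp'$ and the bound $\|\dot\bp-\dot\bp'\|=\cO(\ve/\delta)$ is essentially the same idea as the paper's proof of its Hausdorff estimate (Lemma~\ref{lem:ExplicitHaussDist}, case $\ve_1\neq\ve_2$), specialized to $\ve_2=0$. Where the paper is more systematic is that Lemma~\ref{lem:ExplicitHaussDist} isolates a \emph{single} quantitative Hausdorff bound that is subadditive over changes in each of the four parameters $(a_i,b_i,\bn_i,\ve_i)$, which yields joint continuity on $[0,1]\times\bM$ in one stroke; you instead treat the $\bp$-dependence and $\ve$-dependence in separate steps.

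The genuine gap is in (i), in the $\ve=0$ claim that closedness and Hausdorff-continuous $\bp$-dependence of $\cB_{\cF_0}(\bp)$ (and of $\cB_{\cF_0^+}(\bp)$) ``follow from the closed and continuously $\bp$-varying nature of the linear constraint $\dot\bx\wedge\bn=0$.'' This does not actually establish Hausdorff continuity. The set $\cB_{\cF_0}(\bp)$ is a flat $d$-dimensional disk lying inside a $\bn$-dependent affine subspace of $T_\bp\bM$, and when $\bn$ changes the entire affine subspace moves; one needs an explicit quantitative map between the two flat disks. The paper does this via a rotation $R\in SO(d)$ mapping $\bn_1\mapsto\bn_2$ (fixing the orthogonal complement of $\mathrm{Span}(\bn_1,\bn_2)$) and the estimate $\|R-\Id\|=\sqrt{2(1-\bn_1\cdot\bn_2)}$. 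Your sentence gestures at the conclusion but doesn't supply this construction, and the claim is not trivial: naively, a sequence in $\cB_0(\bp_n)$ need not stay at bounded Hausdorff distance from $\cB_0(\bp)$ unless one shows the flats rotate controllably. The fix is exactly the rotation map of Lemma~\ref{lem:ExplicitHaussDist}; with it in place, the rest of your argument goes through.
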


Proposition \ref{prop:ReedSheppContinuous} allows to apply the results of \S \ref{subapp:ClosednessControllability} to the Reeds-Shepp metrics. Theorem \ref{th:ReedSheppCV} then directly follows from Corollary~\ref{corol:CVMinOC}.
The only remaining non-trivial claim in Proposition \ref{prop:ReedSheppContinuous} is the continuity of the controls on $\bM$, recall Definitions \ref{def:famofcontrols}, and their convergence $\cB_\ve \to \cB_0$ as $\ve \to 0$, as required in Corollary \ref{corol:CVMinOC}.
These two properties are implied by the continuity on $[0,1]\times \bM$, that we next prove, of the following maps
\begin{equation}
\begin{array}{l}\,
 [0,1] \times \mathbb{M} \ni (\ve, \bp) \to \cB_\ve(\bp) \in \gothic{C}(T_{\bp} \mathbb{M}), \\ \,
 [0,1] \times \mathbb{M} \ni (\ve, \bp) \to \cB_\ve^+(\bp) \in \gothic{C}(T_{\bp} \mathbb{M}),
\end{array}
\end{equation}
with $\gothic{C}(T_{\bp} \mathbb{M})$ defined in Definition~\ref{def:8} and equipped with the Hausdorff distance.

\begin{lemma}
\label{lem:HaussMap}
	Let $B$ be a compact subset of a metric space $\bE$, and let $\vp \in C^0(B,\bE)$. Then
	\begin{equation*}
		\cH(B,\vp(B)) \leq \sup_{x \in B} d(x,\vp(x)).
	\end{equation*}
\end{lemma}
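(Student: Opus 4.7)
The plan is to unpack the definition of the Hausdorff distance and bound each of the two ``one-sided'' supremums separately by $M := \sup_{x \in B} d(x, \vp(x))$. Recall that $\cH(B, \vp(B)) = \max\{\sup_{y \in \vp(B)} d_B(y),\ \sup_{x \in B} d_{\vp(B)}(x)\}$, so it suffices to bound both terms by $M$.

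For the first term, I would pick an arbitrary $y \in \vp(B)$, write it as $y = \vp(x)$ for some $x \in B$, and note that $d_B(y) = \inf_{z \in B} d(z, y) \leq d(x, \vp(x)) \leq M$, since $x \in B$ is an admissible choice for the infimum. Taking the supremum over $y \in \vp(B)$ preserves the bound.

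For the second term, I would pick $x \in B$ and observe that $\vp(x) \in \vp(B)$, so $\vp(x)$ is an admissible choice in the infimum defining $d_{\vp(B)}(x)$, giving $d_{\vp(B)}(x) \leq d(x, \vp(x)) \leq M$. Taking the supremum over $x \in B$ again preserves the bound, and the maximum of the two bounded suprema is at most $M$, which is what we wanted.

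There is no genuine obstacle here: compactness of $B$ and continuity of $\vp$ ensure $\vp(B)$ is compact (so the Hausdorff distance is well defined and $M$ is attained), but the inequality itself is a direct consequence of the fact that for any $x \in B$, the pair $(x, \vp(x))$ simultaneously witnesses an upper bound for both $d_B(\vp(x))$ and $d_{\vp(B)}(x)$.
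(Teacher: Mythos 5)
Your proof is correct: both one-sided Hausdorff suprema are bounded by $M := \sup_{x\in B} d(x,\vp(x))$ because, for each $x\in B$, the pair $(x,\vp(x))$ witnesses both $d_B(\vp(x))\le M$ and $d_{\vp(B)}(x)\le M$. The paper explicitly states this lemma without proof (calling it a "basic lemma, stated without proof"), so there is nothing in the paper to compare against; your unpacking of the definition is exactly the intended elementary argument, and your parenthetical remark that compactness of $B$ plus continuity of $\vp$ make $\vp(B)$ compact is the right justification for $\vp(B)\in\cK(\bE)$.
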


This basic lemma, stated without proof, is used in the next lemma to obtain an explicit estimate of the Hausdorff distance between the controls sets of the Reeds-Shepp models.

\begin{lemma}
\label{lem:ExplicitHaussDist}
	Let $\bn_1,\bn_2\in \bS^{d-1}$, let $a_1,a_2,b_1,b_2 \geq 1$, and let $\ve_1,\ve_2\in [0,1]$. For each $i\in \{1,2\}$, let $B_i$ be the collection of all $(\dot \bx,\dot \bn)\in \bR^d\times \bR^d$ obeying
	\begin{equation*}
		\begin{array}{c}
		\dot \bn \cdot \bn_i = 0, \\
		\left\{ \begin{array}{ll}
		a_i^2\|\dot \bn\|^2+b_i^2\left(|\dot \bx \cdot \bn_i|^2+\ve_i^{-2} \|\dot \bx \wedge \bn_i\|^2 \right) \leq 1, & \quad \varepsilon_i > 0 \\
		a_i^2\|\dot \bn\|^2+b_i^2|\dot \bx \cdot \bn_i|^2 \leq 1 \ \text{ and } \ \dot \bx \wedge \bn_i=0 , & \quad \varepsilon_i = 0 \ .
		\end{array} \right.
		\end{array}
	\end{equation*}
	\begin{equation}
	\begin{split}
\textrm{Then }
	\label{eq:ExplicitHaussDist}
		&\cH(B_1, B_2) \leq |a_1^{-1}-a_2^{-1}| + |b_1^{-1}-b_2^{-1}|  \\ &\hspace*{6em}+ \sqrt{2(1-\bn_1\cdot\bn_2)} + |\ve_1-\ve_2|.
	\end{split}
	\end{equation}
	The same estimate holds for the 
	 sets $B_i^+$, $i \in \{1,2\}$, defined by the inequalities
	\begin{equation*}
	\hspace*{-.5em}
	\begin{array}{c}
		\dot \bn \cdot \bn_i = 0, \\

		\left\{\begin{array}{ll}
		a_i^2\|\dot \bn\|^2+b_i^2\left((\dot \bx \cdot \bn_i)_+^2+\ve_i^{-2} (\|\dot \bx \wedge \bn_i\|^2 + (\dot \bx \cdot \bn_i)_-^2)\right) \leq 1, \\
		\hspace*{20.5em} \text{if }\ve_i>0, \vspace{.5em} \\

		a_i^2\|\dot \bn\|^2+b_i^2(\dot \bx \cdot \bn_i)_+^2  \leq 1
\ \text{ and } \ \dot \bx \wedge \bn_i=0, \quad  \dot \bx \cdot \bn_i\geq 0, \\
		\hspace*{20.5em} \text{if }\ve_i=0.
		\end{array}\right.

	\end{array}
	\end{equation*}
\end{lemma}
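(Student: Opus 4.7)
The plan is to construct a map $\vp:B_1\to B_2$ whose pointwise displacement $\sup_{x\in B_1}\|x-\vp(x)\|$ is bounded by the right-hand side of \eqref{eq:ExplicitHaussDist}; Lemma~\ref{lem:HaussMap} then bounds $\sup_{x\in B_1}d(x,B_2)$ by that same quantity, and applying the identical construction with the roles of indices $1$ and $2$ swapped yields the symmetric estimate, which together establish the Hausdorff bound. A crucial preliminary observation is that, since $a_i,b_i\geq 1$ and $\ve_i\leq 1$, any $(\dot\bx,\dot\bn)\in B_i$ satisfies
\[
\|\dot\bx\|^{2}+\|\dot\bn\|^{2}\;\leq\; a_i^{2}\|\dot\bn\|^{2}+b_i^{2}\bigl(|\dot\bx\cdot\bn_i|^{2}+\ve_i^{-2}\|\dot\bx\wedge\bn_i\|^{2}\bigr)\;\leq\; 1,
\]
together with the individual estimates $\|\dot\bx\|\leq 1/b_i$, $\|\dot\bn\|\leq 1/a_i$, and $\|\dot\bx\wedge\bn_i\|\leq \ve_i/b_i$.

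\medskip
I would then realize $\vp=\vp_{4}\circ\vp_{3}\circ\vp_{2}\circ\vp_{1}$, routing $B_1$ through the intermediate sets $B(a_1,b_2,\ve_1,\bn_1)$, $B(a_2,b_2,\ve_1,\bn_1)$, and $B(a_2,b_2,\ve_2,\bn_1)$ before reaching $B_2$, where $B(a,b,\ve,\bn)$ denotes the control set defined by the analogous inequalities with the stated parameters. Set $\vp_1(\dot\bx,\dot\bn):=((b_1/b_2)\dot\bx,\dot\bn)$, $\vp_2(\dot\bx,\dot\bn):=(\dot\bx,(a_1/a_2)\dot\bn)$, $\vp_3(\dot\bx,\dot\bn):=\bigl((\dot\bx\cdot\bn_1)\bn_1+(\ve_2/\ve_1)(\dot\bx-(\dot\bx\cdot\bn_1)\bn_1),\,\dot\bn\bigr)$, and $\vp_4(\dot\bx,\dot\bn):=(R\dot\bx,R\dot\bn)$ where $R\in SO(d)$ is the minimal rotation taking $\bn_1$ to $\bn_2$, so that $\|R-I\|_{\mathrm{op}}=\sqrt{2(1-\bn_1\cdot\bn_2)}$. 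A direct verification that each intermediate set is indeed mapped inside the next one follows from the quadratic form of the defining inequality, and the preliminary bounds yield displacements for $\vp_1,\vp_2,\vp_3,\vp_4$ controlled respectively by $|b_1^{-1}-b_2^{-1}|$, $|a_1^{-1}-a_2^{-1}|$, $|\ve_1-\ve_2|/b_2\leq|\ve_1-\ve_2|$, and $\|R-I\|_{\mathrm{op}}\sqrt{\|\dot\bx\|^{2}+\|\dot\bn\|^{2}}\leq\sqrt{2(1-\bn_1\cdot\bn_2)}$; summing these gives exactly the announced right-hand side.

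\medskip
The main subtleties I anticipate concern the degenerate cases $\ve_1=0$ or $\ve_2=0$ where the ratio $\ve_2/\ve_1$ in $\vp_3$ is not literally defined: when $\ve_1=0$ the vector $\dot\bx\wedge\bn_1$ already vanishes for $(\dot\bx,\dot\bn)\in B(a_2,b_2,0,\bn_1)$ and $\vp_3$ may be taken to be the identity, and when $\ve_2=0$ one instead projects $\dot\bx$ orthogonally onto $\mathrm{span}(\bn_1)$, incurring an error bounded by $\|\dot\bx\wedge\bn_1\|\leq\ve_1/b_2\leq|\ve_1-\ve_2|$. The adaptation to the positive-control sets $B_i^+$ is immediate once one observes that each of the four maps preserves the sign of $\dot\bx\cdot\bn_i$: this is clear for $\vp_1$ (positive scaling), $\vp_2$ (acts only on $\dot\bn$), and $\vp_3$ (modifies only the component orthogonal to $\bn_1$), while for $\vp_4$ it follows from the identity $R\dot\bx\cdot\bn_2=R\dot\bx\cdot R\bn_1=\dot\bx\cdot\bn_1$; hence the same four displacement bounds transfer verbatim and give the desired estimate on $\cH(B_1^+,B_2^+)$.
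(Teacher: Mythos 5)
Your overall strategy --- deforming $B_1$ into $B_2$ by changing one of the four parameters $(a,b,\bn,\ve)$ at a time, and controlling the displacement of each elementary step using the a priori bounds $\|\dot\bx\|\leq 1/b_i$, $\|\dot\bn\|\leq 1/a_i$, $\|\dot\bx\wedge\bn_i\|\leq \ve_i/b_i$ --- is essentially the same mechanism the paper uses (it phrases the chaining via subadditivity of $\cH$ rather than by composing the maps, but the elementary steps and displacement bounds coincide). Your treatment of the symmetric sets $B_i$, including the degenerate $\ve_i=0$ cases, is correct.

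However, the adaptation to the one-sided sets $B_i^+$ contains a genuine gap. Preserving the sign of $\dot\bx\cdot\bn_1$ is necessary but not sufficient: you must also verify that the image of each intermediate set actually lies inside the next one, and for $\vp_3$ this fails. Since $\vp_3$ leaves $\dot\bx\cdot\bn_1$ unchanged, after the map the term $\ve_i^{-2}(\dot\bx\cdot\bn_i)_-^2$ in the defining inequality acquires the factor $\ve_2^{-2}$ while $(\dot\bx\cdot\bn_1)_-$ is not rescaled; so the inequality at parameter $\ve_2$ can be violated whenever $\ve_2<\ve_1$ and $\dot\bx\cdot\bn_1<0$. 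For a concrete counterexample take $d=2$, $\bn_1=(1,0)$, $a_2=b_2=1$, $\ve_1=1$, $\ve_2=1/2$, $\dot\bn=0$, $\dot\bx=(-0.6,0)$: the left-hand side equals $0.36\leq 1$ at $\ve_1$, your $\vp_3$ is the identity here, yet at $\ve_2$ the left-hand side becomes $4\cdot 0.36=1.44>1$. Swapping the roles of the indices $1,2$ (which you invoke for the other Hausdorff direction) does not rescue the argument, since the reverse scaling produces the same problem in the opposite regime. The fix --- as in the paper --- is to take a piecewise map for the $\ve$ step on $B^+$: when $\dot\bx\cdot\bn_1\geq 0$ proceed exactly as in $\vp_3$, but when $\dot\bx\cdot\bn_1<0$ scale the \emph{entire} $\dot\bx$ by $\ve_2/\ve_1$. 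Then all three $\dot\bx$ terms in the constraint scale identically, the target constraint is preserved, and the displacement is controlled by the sharper bound $\|\dot\bx\|\leq\ve_1/b_1\leq\ve_1$ valid on the region $\dot\bx\cdot\bn_1\leq 0$ (where $b_1^2\ve_1^{-2}\|\dot\bx\|^2\leq 1$), giving a displacement $\leq|\ve_2/\ve_1-1|\,\ve_1=|\ve_1-\ve_2|$ as needed. The degenerate case $\ve_2=0$ for $B^+$ requires the analogous repair: project onto the half-line $\{\lambda\bn_1:\lambda\geq 0\}$ (i.e.\ send $\dot\bx\mapsto(\dot\bx\cdot\bn_1)_+\bn_1$) rather than onto $\mathrm{span}(\bn_1)$, and bound the displacement by $\|\dot\bx\|\leq\ve_1$ on the region $\dot\bx\cdot\bn_1\leq 0$.
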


\begin{proof}
	It suffices to establish the announced estimate \eqref{eq:ExplicitHaussDist} when the tuples $(a_i,b_i,\bn_i,\ve_i)$, $i \in \{1,2\}$, differ by a \emph{single} element of the four, and then to use the subadditivity of the Hausdorff distance. In each case we apply Lemma \ref{lem:HaussMap} to a well chosen surjective map $\vp : B_1 \to B_2$ (resp $\vp^+ : B_1^+ \to B_2^+$).
	\begin{itemize}
		\item Case $a_1 \neq a_2$. Assume w.l.o.g.\ that $a_1 < \infty$, and observe that for all $(\dot \bx,\dot\bn)\in B_1$ one has  $a_1\|\dot \bx\| \leq 1$, hence $\|a_1 \dot \bx / a_2 -\dot \bx\|\leq |a_1^{-1}-a_2^{-1}|$. Choose $\vp(\dot \bx, \dot \bn) := (a_1 \dot \bx / a_2, \dot \bn)$.
		\item Case $b_1\neq b_2$. As above, with $\vp(\dot \bx, \dot \bn) := (\dot \bx, b_1 \dot \bn / b_2)$, yielding upper bound $|b_{1}^{-1}-b_{2}^{-1}|$.
		\item Case $\bn_1\neq \bn_2$. Let $R$ be the rotation of $\bR^d$ which maps $\bn_1$ onto $\bn_2$, in such a way that it maps the space orthogonal to the plane ${\rm Span}(\bn_1,\bn_2)$ onto itself. A simple calculation yields $\|R-\Id\| = 2 \sin[ \frac 1 2 \cos^{-1}(\bn_1 \cdot\bn_2)] = \sqrt{2(1-\bn_1\cdot\bn_2)}$. The result follows by choosing $\vp(\dot \bx, \dot \bn) := (R \dot \bx, R \dot \bn)$, so that
$\|\vp(\dot \bx, \dot \bn) - (\dot \bx, \dot \bn)\| \leq \|R-\Id\|\sqrt{\|\dot \bn\|^2+\|\dot \bx\|^2} \leq \|R-\Id\|$ for all $(\dot\bx,\dot \bn) \in B_1$ as announced.
		\item Case $\ve_1 \neq \ve_2$. Assume w.l.o.g.\ that $\ve_1 > 0$, and
		consider the orthogonal projections
		\begin{align*}
			P_1(\dot \bx) &:= (\dot \bx \cdot \bn_1) \bn_1 &
			P_1^\perp(\dot \bx) &:= (\text{Id}-P_1)(\dot \bx).
		\end{align*}
		Note that $P_1^\perp(\dot \bx) \leq \ve_1$ if $(\dot\bx,\dot\bn) \in B_1$, and that $\|\dot \bx\| \leq \ve_1$ if $(\dot\bx,\dot\bn) \in B_1^+$ and $\dot \bx \cdot \bn_1 \leq 0$. The result follows by choosing
		\begin{align*}
			\vp(\dot\bx,\dot\bn) &:= \left(P_1(\dot \bx)+ \frac {\ve_2}{\ve_1} P_1^\perp(\dot \bx), \dot \bn \right), \\
			\vp^+(\dot\bx,\dot\bn) &:=
			\begin{cases}
			\vp(\dot\bx,\dot\bn) & \text{if } \dot \bx \cdot \bn_1 \geq 0,\\
			( \frac {\ve_2}{\ve_1} \dot \bx,\dot \bn) & \text{otherwise.}
			\end{cases}
			\qedhere
		\end{align*}
	\end{itemize}
\end{proof}

\begin{proof}[Proof of Proposition \ref{prop:ReedSheppContinuous}]
Since working with Hausdorff distances on the abstract tangent bundle $T \bM$ is not very practical, we make use of the canonical embedding $\cI : \bR^d \times \bS^{d-1} \to \bR^d \times \bR^d$ of the manifold $\bM$ into the Euclidean vector space $\bR^{2d}$ given by
$(\ul{x},\ul{n}) \mapsto (\ul{x},\ul{n})$, which has bounded distortion. It suffices to prove the continuity of the image of the control sets $(\ve,\bp) \to \diff\cI (\bp,\cB_{\cF_\ve}(\bp))$ (resp.\ likewise with $\cF_\ve^+$) by the tangent maps to this embedding, which follows by Lemma~\ref{lem:ExplicitHaussDist}. Indeed the lemma shows that

\begin{align*}
( (\ve_1, \bp_1) \rightarrow (\ve_2,\bp_2) ) \implies (\mathcal{H}(B_{\cF_{\ve_1}},B_{\cF_{\ve_2}}) \rightarrow 0),
\end{align*}
and it includes the spherical constraint via the velocity constraint $\dot{\bn} \cdot \bn_i = \frac{d}{dt}(\bn(t) \cdot \bn(t))|_{t = 0} = 0$ for a smooth curve $\gamma(t) = (\bx(t),\bn(t))$ passing through $\gamma(0) = (\bx_i,\bn_i)$.
\end{proof}

\section{Iterative PDE procedure for solving the Eikonal Equation}
\label{app:iterative}

We compare the FM method with an iterative PDE method similar to the one used for the $\R^2 \times \bS^1$-case in \cite{bekkers_pde_2015}, in which the BVP is solved using an iterative procedure (with updating) inspired by mathematical morphology \cite{schmidt_morphological_2016}. To adhere, to the previous work \cite{bekkers_pde_2015},
and for notational convenience we constrain ourselves to the case where the external costs are equal, i.e. $\mathcal{C}_{1}=\mathcal{C}_{2}=\mathcal{C}$, where
of course the general case can be straightforwardly obtained from this special case by a simple position dependent rescaling in the PDE's.

They formulate an auxiliary initial value problem (IVP), for which the 3D analog in $\R^d \times \bS^{d-1}$ with $d \in \{2,3\}$, and with $\ve \geq 0$ is the following:
{\small
\begin{equation}\label{eq:IVP}
\hspace*{-.5em}
\left\{
\begin{aligned}
&\frac{\partial U_{n+1}}{\partial r}(\bp,r) =  \\ &\mathcal{C}^{-1}(\bp) \sqrt{
\begin{split}
\|\nabla_{S^{d\!-\!1}}U_{n+1}(\ul{p},r)\|^2 + \frac{\ve^{2}}{\xi^2}\|\nabla_{\R^{d}}U_{n+1}(\ul{p},r)\|^2 \\ +\frac{1-\ve^2}{\xi^2} |\, \ul{n} \cdot \nabla_{\R^{d}}U_{n+1}(\ul{p},r) \,|^2
\end{split}
}-1, \\
&U_{n+1}(\bp,r_n) = U_n(\bp,r_n), \qquad \text{for } \bp \neq \ul{e}, \\
&U_{n+1}(\ul{e},r_n) = 0, \\
&U_{n+1}(\bp,0) = \delta_{\ul{e}}^M(\bp),
\end{aligned}
\right.
\end{equation}
}
with source point $\ul{p}_{S}=\ul{e}:=(\mathbf{0},\ba)$, for $n \in \mathbb{N}$. Here $r_n = n \ve$ and $r \in [r_n, r_{n+1}]$ are artificial times of the IVP and $\delta_{(\mathbf{0},\ba)}^M$ is the morphological delta, given by

\begin{equation}
\delta_{(\mathbf{0},\ba)}^M(\bp) = \begin{cases}
0 \qquad & \bp = (\mathbf{0},\ba), \\
\infty \qquad & \text{else}.
\end{cases}
\end{equation}
Now the limit
\begin{equation} \label{limit}
U_{\infty}(\bp) := \lim_{\ve \rightarrow 0} \left( \lim_{n \rightarrow \infty} U_{n+1} (\bp, (n+1)\ve )\right)
\end{equation}
gives the viscosity solution $U_{\infty}(\bp)=d_{\mathcal{F}_{\ve}}(\bp, \ul{e})$ of the eikonal equation (\ref{eqdef:EikonalPDE}) for Finsler function $\mathcal{F}_{\ve}$ whose dual is given by (\ref{eq:FEpsPlusStar}).

We approximate the system (\ref{eq:IVP}) with first order, upwind finite differences for the gradients on the right-hand side, and central differences for the time derivate. We use the following stopping criterion:
\begin{equation}
\max_{\bp} |U_{n+1}(\bp,r_{n+1}) - U_n(\bp,r_n)| < \theta, \qquad \theta \in \R.
\end{equation}
The disadvantage of this method (compared to the single pass anisotropic fast-marching method) is the computational load.
The advantage of this PDE-method is a high accuracy near the origin, and that it is very easy to adapt to the (approximative) Reeds-shepp car model without reverse gear (i.e. 
the metric space $(\mathbb{M},\mathcal{F}_{\ve})$) as we explain next. 
Namely $W^+(\bp,e)=d_{\mathcal{F}_{\ve}^+}(\bp, \ul{e})$ is implemented by the same limiting procedure (\ref{limit}) but now applied to
{\small
\begin{equation*}
\left\{
\begin{aligned}\!\!
&\frac{\partial U_{n+1}^+}{\partial r}(\bp,r) = \\ &\mathcal{C}^{-1}(\bp) \sqrt{
\begin{split}
\|\nabla_{S^{d\!-\!1}}U^+_{n+1}(\ul{p},r)\|^2\!+\! \frac{\ve^{2}}{\xi^2}\|\nabla_{\R^{d}}U_{n+1}^+(\ul{p},r)\|^2\! \\ + \frac{1-\ve^2}{\xi^2} |\, (\, \ul{n} \cdot \nabla_{\R^{d}}U_{n+1}^+(\ul{p},r) )_+ \,|^2
\end{split}
}-1, \\
&U_{n+1}^+(\bp,r_n) = U_n^+(\bp,r_n), \qquad \text{for } \bp \neq \ul{e}, \\
&U_{n+1}^+(\ul{e},r_n) = 0, \\
&U_{n+1}^+(\bp,0) = \delta_{\ul{e}}^M(\bp)
\end{aligned}
\right.
\end{equation*}
}

\section{Backtracking of Geodesics in $(\mathbb{M},d_{\mathcal{F}})$}
\label{app:Backtracing}

This section is devoted to a generic ingredient in the proof of Theorem~\ref{th:Backtracing}, regarding
backtracking of Geodesics in the (quasi)-Metric Space $(\mathbb{M},d_{\mathcal{F}})$ in general.
Although, these results are standard in Finsler Geometry, we aim to provide a concise overview.
\begin{lemma}
\label{lem:DualNormDiff}
	Let $F$ be an asymmetric norm on a vector space $\bE$, and assume that $F^*$ is differentiable at $\cot \bp \in \bE^*$. Then
	\begin{align*}
		F(\diff F^*(\cot \bp)) &= 1, &
		\<\cot \bp, \diff F^*(\cot \bp)\> &= F^*(\cot \bp).
	\end{align*}
\end{lemma}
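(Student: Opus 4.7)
The plan is to exploit two structural properties of the dual norm $F^*$: its $1$-homogeneity (inherited from $F$) and its convexity (which follows from the fact that $F^*$ is a supremum of linear functionals, namely $F^*(\cot\bp) = \sup_{\dot\bp\neq 0} \langle \cot\bp, \dot\bp\rangle / F(\dot\bp)$).

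First I would establish the second identity $\langle \cot\bp, \diff F^*(\cot\bp)\rangle = F^*(\cot\bp)$ by applying Euler's identity for positively $1$-homogeneous functions. Concretely, differentiate the relation $F^*(\lambda \cot\bp) = \lambda F^*(\cot\bp)$ with respect to $\lambda$ at $\lambda = 1$; the left-hand side yields $\langle \diff F^*(\cot\bp), \cot\bp\rangle$ by the chain rule, and the right-hand side yields $F^*(\cot\bp)$. (Alternatively, combine the two subgradient inequalities obtained by testing the convexity inequality at $\cot\bq = 0$ and $\cot\bq = 2\cot\bp$.)

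Next I would prove $F(\diff F^*(\cot\bp)) = 1$. Set $\dot\bp := \diff F^*(\cot\bp)$. From the convexity of $F^*$ and the already established Euler identity, the subgradient inequality $F^*(\cot\bq) \geq F^*(\cot\bp) + \langle \diff F^*(\cot\bp), \cot\bq - \cot\bp\rangle$ simplifies to
\begin{equation*}
    \langle \cot\bq, \dot\bp\rangle \;\leq\; F^*(\cot\bq) \qquad \text{for all } \cot\bq \in \bE^*.
\end{equation*}
Using double duality $F = F^{**}$, i.e.\ $F(\dot\bp) = \sup_{\cot\bq \neq 0} \langle \cot\bq, \dot\bp\rangle / F^*(\cot\bq)$, this estimate gives $F(\dot\bp) \leq 1$. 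Equality is then attained by plugging $\cot\bq = \cot\bp$ into the same sup: the ratio becomes $\langle \cot\bp, \dot\bp\rangle / F^*(\cot\bp) = 1$, using the Euler identity from the previous paragraph.

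The main (mild) obstacle is the use of double duality $F = F^{**}$, which in the asymmetric setting requires $F$ to be a closed sublinear function (i.e.\ convex, positively homogeneous and lower semi-continuous). This is guaranteed by the standing assumptions on $F$ as an asymmetric norm, so the Fenchel–Moreau theorem applies and the argument goes through without further difficulty.
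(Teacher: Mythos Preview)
Your proof is correct. The second identity is handled identically to the paper, via Euler's formula for $1$-homogeneous functions. For the first identity the paper takes a slightly different route: it writes $F^*(\cot\bp) = \max_{F(\dot\bp)=1}\langle\cot\bp,\dot\bp\rangle$ and invokes differentiation of a max (the envelope/Danskin theorem), so that $\diff F^*(\cot\bp)$ equals the maximizer, which by construction lies on the unit sphere $\{F=1\}$. Your argument instead combines the subgradient inequality of the convex function $F^*$ with biduality $F=F^{**}$ to sandwich $F(\diff F^*(\cot\bp))$ between $1$ and $1$. The two approaches are equivalent convex-analytic facts; yours is a bit more self-contained (no implicit appeal to an envelope theorem) at the cost of needing Fenchel--Moreau, while the paper's is terser but leaves the envelope step to the reader.
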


\begin{proof}
	The 1st claim follows by differentiation of $F^*$
	\begin{equation*}
		F^*(\cot \bp) = \sup_{\dot \bp \in \bE\sm \{0\}} \frac {\<\cot \bp, \dot \bp\>}{F(\dot \bp)} = \max_{F(\dot \bp)=1} \<\cot \bp, \dot \bp\>.
	\end{equation*}
	The 2nd claim is Euler's formula for homogeneous functions.
\end{proof}

\begin{proposition}
\label{prop:GeodesicODE}
	Let $\pSource,\pTarget \in \bM$, let $\gamma$ be a minimizing \todo{R5.1: terminology} geodesic from $\pSource$ to $\pTarget$ w.r.t. a continuous metric $\cF$, and let $t \in [0,1]$.
	Assume that the distance map $U$ from $\pSource$ is differentiable at $\gamma(t)$, and that the dual metric $\cF^*$ is differentiable w.r.t.\ the second variable at $(\gamma(t), \diff U(\gamma(t)))$.
	Then $\gamma$ is differentiable at time $t$ and with $L := d_\cF(\pSource, \pTarget)$ 
	\begin{equation}
	\label{eq:appendixGeodesicODE}
		\dot \gamma(t) = L \ \diff_{\cot \bp} \cF^*(\gamma(t), \diff U(\gamma(t))), \; \; \gamma(0) = \bp_S, \gamma(1) = \bp_T.
	\end{equation}
\end{proposition}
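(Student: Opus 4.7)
My plan is to establish differentiability of $\gamma$ at time $t$ and the formula \eqref{eq:appendixGeodesicODE} by showing that every subsequential limit of the difference quotients $(\gamma(t+h_n)-\gamma(t))/h_n$ equals $L \bv^*$, where $\bv^* := \diff_{\cot \bp}\cF^*(\gamma(t), \diff U(\gamma(t)))$. This will proceed in three steps: (i) linear growth of $U$ along $\gamma$, (ii) checking that any such subsequential limit, rescaled by $1/L$, lies in the control ball $\cB_\cF(\gamma(t))$ while attaining the eikonal bound, and (iii) identifying the unique such maximizer via the differentiability of $\cF^*$.

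For step (i), since $\gamma$ is a minimizing geodesic, it is normalized, so $\Length_\cF(\gamma|_{[0,s]}) = sL$ and $\Length_\cF(\gamma|_{[s,1]}) = (1-s)L$ for every $s \in [0,1]$. Combining the upper bounds $U(\gamma(s)) \leq sL$ and $d_\cF(\gamma(s),\pTarget) \leq (1-s)L$ with the triangle inequality $U(\gamma(s)) + d_\cF(\gamma(s),\pTarget) \geq U(\pTarget) = L$ forces equality throughout, giving $U(\gamma(s)) = sL$.

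For step (ii), fix a sequence $h_n \to 0$. The uniform coercivity $\cF \geq \delta \|\cdot\|$ makes $\gamma$ Lipschitz in any local chart (or via the bounded-distortion embedding used in Appendix~A), so the rescaled differences are bounded and admit a subsequential limit $\bw$. Step (i) together with differentiability of $U$ at $\gamma(t)$ yields
\[
L = \lim_n \frac{U(\gamma(t+h_n)) - U(\gamma(t))}{h_n} = \<\diff U(\gamma(t)), \bw\>.
\]
For the bound $\cF(\gamma(t),\bw) \leq L$, I will exploit that $\dot\gamma(r) \in L\cB_\cF(\gamma(r))$ for a.e.\ $r$, together with continuity of $\bp \mapsto \cB_\cF(\bp)$ in the Hausdorff sense (built into Definition~\ref{def:Metric}): for every $\varepsilon > 0$ there is $\eta>0$ with $\cB_\cF(\gamma(r)) \subset \cB_\cF(\gamma(t)) + \varepsilon B$ whenever $|r-t|<\eta$, with $B$ the ambient Euclidean unit ball. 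Since the right-hand set is convex, the averaged quotient
\[
\frac{\gamma(t+h_n) - \gamma(t)}{h_n L} = \frac{1}{h_n L}\int_t^{t+h_n}\dot\gamma(r)\,\diff r
\]
stays in $\cB_\cF(\gamma(t)) + \varepsilon B$ for all large $n$. Passing to the limit and then letting $\varepsilon \downarrow 0$, using closedness of $\cB_\cF(\gamma(t))$, gives $\bw/L \in \cB_\cF(\gamma(t))$.

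For step (iii), the eikonal PDE \eqref{eqdef:EikonalPDE} at the differentiability point $\gamma(t)$ yields $\cF^*(\gamma(t), \diff U(\gamma(t))) = 1$. Combined with $\<\diff U(\gamma(t)), \bw/L\> = 1$ and $\bw/L \in \cB_\cF(\gamma(t))$, this exhibits $\bw/L$ as a maximizer in the supremum defining $\cF^*(\gamma(t), \diff U(\gamma(t)))$. Differentiability of $\cF^*$ in the second variable then pins this maximizer down uniquely: substituting $\diff U(\gamma(t)) + \epsilon \cot \bq$ into the definition of $\cF^*$ and Taylor expanding, combined with the lower bound $\cF^*(\diff U(\gamma(t)) + \epsilon \cot \bq) \geq \<\diff U(\gamma(t)) + \epsilon \cot \bq, \bw/L\>$, gives $\<\cot \bq, \bv^*\> = \<\cot \bq, \bw/L\>$ for every $\cot \bq$, hence $\bw = L\bv^*$. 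Since every subsequential limit of $(\gamma(t+h)-\gamma(t))/h$ equals the same vector $L\bv^*$, the full limit exists, so $\gamma$ is differentiable at $t$ with $\dot\gamma(t) = L\bv^*$, as claimed. The main obstacle is step (ii): the a.e.\ tangency constraint for the Lipschitz curve $\gamma$ must survive averaging and passage to the limit in order to yield a pointwise constraint at the specific time $t$, and this is only possible thanks to the convexity and Hausdorff continuity of the control balls $\cB_\cF(\bp)$, exactly the regularity built into our notion of metric.
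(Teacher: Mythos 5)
Your proof is correct and follows essentially the same route as the paper's: extract a subsequential limit $\bw$ of difference quotients, verify it satisfies $\<\diff U(\gamma(t)),\bw\> = L$ together with $\cF(\gamma(t),\bw)\leq L$, and conclude that $\bw$ is uniquely determined by the differentiability of $\cF^*$ at $\diff U(\gamma(t))$. Your step (ii) supplies a careful justification, via Hausdorff continuity and convexity of the control balls $\cB_\cF$, of the bound $\cF(\gamma(t),\bw)\leq L$, which the paper asserts directly as a consequence of constant speed; and your step (iii) re-derives uniqueness of the maximizer by a direct first-order Taylor argument instead of invoking Lemma~\ref{lem:DualNormDiff} together with the duality between differentiability of $\cF^*$ and strict convexity of $\cF(\gamma(t),\cdot)$, but both arguments are standard and equivalent.
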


\begin{proof} The path $\gamma$ has constant speed $L$, and $t\mapsto U(\gamma(t))$ increases linearly from $0$ to $L$ on it. Let $t \in [0,1]$ be as in the statement of the proposition, and let \[
\dot \Gamma(t) :=  \lim \limits_{n \to \infty}(\gamma(t+\ve_n)-\gamma(t))/\ve_n\] for some sequence $\ve_n\to 0$. Then 
	\begin{align*}
		\cF(\gamma(t), \dot \Gamma(t)) &= L & \textrm{ and }
		\<\diff U(\gamma(t)), \dot \Gamma(t)\> &= L.
	\end{align*}
	For typographic simplicity let us denote $\bp := \gamma(t)$, $\dot \bp := \dot \Gamma(t)$, $F = \cF(\bp, \cdot)$ and $F^* := \cF^*(\bp,\cdot)$. By Lemma \ref{lem:DualNormDiff} and the eikonal equation \eqref{eqdef:EikonalPDE}, the vector $\dot \bq = \diff F^*( \diff U( \bp))$ obeys
	\begin{align*}
		F(\dot \bq) &= F(\diff F^*(\diff U(\bp))) = 1, \\
		\<\diff U(\bp), \dot \bq\> &=  \<\diff U(\bp), \diff F^*(\diff U(\bp))\> = F^*(\diff U(\bp) ) = 1.
	\end{align*}
	Note that the duality-bracket/norm inequality is saturated by $\<\diff U(\bp),\dot \bq\> = 1  =\allowbreak F^*(\diff U(\bp))F(\dot \bq)$, and that the assumed differentiability of the dual norm $F^*$ at the point $\cot \bp = \diff U(\bp)$ implies the strict convexity of the primal norm $F$ (up to $1$-homogeneity)
	at the point $\diff F^*(\cot \bp) = \dot \bq$. Hence $\dot \bq$ is the unique solution to the system ``$F^*(\dot \bq)=1$ and $\<\diff U(\bp), \dot \bq\>=1$'', and therefore  $\dot \Gamma = L \dot \bq$. This implies the differentiability of $\gamma$ at time $t$, and the announced equality \eqref{eq:appendixGeodesicODE}.
\end{proof}

\begin{remark}[Lagrangians and Hamiltonians]
\label{rem:LagrangianHamiltonian}
Given an arbitrary Finsler metric $\cF$ on $\bM$, its half-square $\gL := \frac 1 2 \cF^2 : T(\bM) \to [0,+\infty]$ is usually called the Lagrangian.
The shortest path problem \eqref{eqdef:dF} can be reformulated in terms of the Lagrangian, thanks to the Cauchy-Schwartz's inequality which gives
\begin{align}
\label{eqdef:Ef}
	d_\cF(\bp,\bq)^2 = \inf\{ \int_0^1 \cF(\gamma(t), \dot \gamma(t))^2 dt \, | \, \gamma \in \Lip([0,1], \bM) \nonumber \\, \gamma(0)=\bp, \gamma(1)=\bq\}.
\end{align}
A path $\gamma$ is a minimizer of \eqref{eqdef:Ef} iff it is simultaneously  normalized and a minimizer of \eqref{eqdef:dF}.
The Hamiltonian $\gH$ is the Legendre-Fenchel \todo{E.4: typo} transform of its Lagrangian $\gL$ w.r.t. the second variable, hence $\gH = \frac 1 2 (\cF^*)^2$
(for details see \cite[ch.14.8]{bao})
The eikonal equation can thus be rephrased in terms of the Hamiltonian: 
	\begin{equation*}
		\cF^*(\bp, \diff U(\bp)) = 1 \ \Leftrightarrow \ \gH(\bp, \diff U(\bp))=\frac 1 2.
	\end{equation*}
The Hamiltonian can also be used to reformulate the backtracking ODE of geodesics, thanks to the following identity which follows from the eikonal equation: for any $\bp \in \bM$
\begin{equation}
\label{eq:HamiltonBacktracing}
\begin{split}
	\diff_{\cot \bp} \gH(\bp, \diff U(\bp))
	=& \cF^*(\bp, \diff U(\bp)) \, \diff_{\cot \bp} \cF^*(\bp, \diff U(\bp)) \\
	=& \diff_{\cot \bp} \cF^*(\bp, \diff U(\bp)).
\end{split}
\end{equation}
In geometric control theory this Hamiltonian is often referred to the `fixed time Hamiltonian of the action functional', cf.~\cite{agrachev_control_2004,bekkers_pde_2015,sachkov_cut_2011},
and is typically used \cite{moiseev_maxwell_2010} in the Pontryagin maximum principle \cite{agrachev_control_2004} for (sub-)Riemannian geodesics.
\end{remark}

\section{Characterization of Cusps: Proof of Lemma~\ref{lem:cusp}}
\label{app:cusp}

Consider Lemma~\ref{lem:cusp}. The structure of this lemma is
$a \desda b \desda c$.
The implication
$a \Rightarrow b$ is trivial. The equivalence $b \Leftrightarrow c$ follows by Theorems~\ref{th:Backtracing},~\ref{th:ReedSheppCV}. The implication $b \Rightarrow a$ remains.

Suppose the $d$-th spatial control aligned with $\ul{n}(t_0)$, recall \eqref{def:utilde}, vanishes: $\tilde{u}(t_0)=0$. Now we show by contradiction that in this case $\dot{\tilde{u}}(t_0) \neq 0$.
Suppose $\tilde{u}(t_0)=\dot{\tilde{u}}(t_0)=0$.

Then by application of the PMP (Pontryagin Maximum Principle), similar to \cite[App.A]{bekkers_pde_2015}, \cite{duits_cuspless_2014}) and coercivity/invertibility of the SR-metric tensor $\left.\mathcal{G}_{0}\right|_{\gamma(t_0)}$, recall (\ref{importantmetric}), constrained to the horizontal part of the tangent space $\left.\Delta \right|_{\gamma(t)}=\{(\bp_0=(\ul{x}_0,\ul{n}_{0}), \dot{\bp}_0=(\dot{\ul{x}}_0,\dot{\ul{n}}_{0})) \in T(\mathbb{M}) \; |\;
\ul{n}_{0} \equiv \dot{\bx}_0\}$, that the (analytic) spatial control variable $\tilde{u}= \mathcal{C}_{1}^{-2}\tilde{\lambda}$ vanishes for all times (for $d=2$ this is directly deduced from the pendulum phase portrait \cite{moiseev_maxwell_2010} in momentum space).
This leaves only purely angular momentum and motion, contradicting $\dot{\ul{x}}(\cdot) \neq \ul{0}$ in Lemma~\ref{lem:cusp}.

Next we verify 
$
\tilde{u}(t_0)=\dot{\tilde{u}}(t_0)=0 \Rightarrow  \dot{\tilde{\lambda}}(t_0)=0 =\tilde{\lambda}(t_0)$.
By the chain rule for differentiation (applied to the $d$-th spatial momentum component
$\tilde{\lambda}(t)= \langle \lambda(t) , (\ul{n}(t), \ul{0})\rangle$):
\[\begin{array}{ll}
\left.\frac{d}{dt} \tilde{\lambda}(t)\right|_{t=t_0} &= \left. \frac{d}{dt} (\mathcal{C}_{1}(\gamma(t)))^{-2} \tilde{u}(t)\right|_{t=t_0} \\ &
 =\left. \frac{d}{dt} (\mathcal{C}_{1}(\gamma(t)))^{-2}\right|_{t=t_0}\, \tilde{u}(t_0)+ \\ &\hspace*{7em}\left. \frac{d}{dt} (\mathcal{C}_{1}(\gamma(t)))^{-2}\right|_{t=t_0}\, \dot{\tilde{u}}(t_0)=0.
 \end{array}
\]
We deduce from PMP's Hamiltonian equations (cf.~\cite{duits_cuspless_2014}) that
\[
\dot{\tilde{\lambda}}(t_0)= \tilde{\lambda}(t_0)=0 \Rightarrow  \tilde{\lambda}(\cdot)=0 \Rightarrow \tilde{u}(\cdot)=0.  
\]

\section{On the Hamiltonian discretization}\label{app:ham}

This appendix is devoted to the rigorous formulation and proof of \eqref{eq:CausalFiniteDiff}. This particular result does not appear in the journal version of this paper, because it makes more sense within a complete convergence analysis for this discretization, to appear soon.
\begin{proposition}
Let $\bn \in \bS^{d-1}$, and let $ \bw_1, \cdots, \bw_k \in \bR^d$ and $\rho_1,\cdots, \rho_d\in \bR_+^d$ be such that 
\begin{align}
\label{eq:TensorDecomposition}
&\forall \bv \in \bR^d,\ \sum_{i=1}^k \rho_i |\bw_i\cdot\bv|^2 = |\bn\cdot\bv|^2  +\ve^2 \|\bn \times \bv\|^2. \nonumber 
\\ &\hspace*{4em} \forall 1 \leq i \leq k, \quad (\bn \cdot \bw_i) \geq 0.
\end{align}
Then $\forall \bv \in \bR^d$ the positive part of the scalar product $\bn\cdot \bv$ can be approximated as follows
\begin{equation}
\label{eq:PositiveProductFramed}
  \ (\bn\cdot\bv)_+^2 \leq  \sum_{i=1}^k \rho_i (\bw_i\cdot\bv)_+^2 \leq (\bn\cdot\bv)_+^2 +\ve^2 \|\bn \times \bv\|^2.
\end{equation}
\end{proposition}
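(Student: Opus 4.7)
The upper bound in \eqref{eq:PositiveProductFramed} is essentially trivial: since $(\bw_i\cdot \bv)_+^2 \le |\bw_i\cdot \bv|^2$ and $\rho_i \geq 0$, summing and using the decomposition \eqref{eq:TensorDecomposition} immediately yields
\begin{equation*}
\sum_{i=1}^k \rho_i (\bw_i\cdot\bv)_+^2 \leq \sum_{i=1}^k \rho_i |\bw_i\cdot\bv|^2 = (\bn\cdot\bv)^2 + \ve^2 \|\bn\times\bv\|^2,
\end{equation*}
and the right-hand side is, in turn, equal to $(\bn\cdot\bv)_+^2 + (\bn\cdot\bv)_-^2 +\ve^2\|\bn\times\bv\|^2$, which is slightly stronger than the claim. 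One may either keep this sharper form or relax $(\bn\cdot\bv)^2$ to $(\bn\cdot\bv)_+^2$ at the cost of an error term; the statement as given follows without issue.

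\textbf{The main step is the lower bound.} The key observation is that the quadratic-form identity in \eqref{eq:TensorDecomposition} polarizes to an identity of symmetric matrices,
\begin{equation*}
\sum_{i=1}^k \rho_i \, \bw_i \bw_i^{\trans} = \bn \bn^{\trans} + \ve^2 (\Id - \bn\bn^{\trans}).
\end{equation*}
Applying both sides to $\bn$, and using $\|\bn\|=1$, yields the linear reproducing identity
\begin{equation*}
\sum_{i=1}^k \rho_i (\bn\cdot \bw_i)\, \bw_i = \bn.
\end{equation*}
Taking the inner product with an arbitrary $\bv\in\bR^d$ gives $\bn\cdot\bv = \sum_i \rho_i (\bn\cdot\bw_i)(\bw_i\cdot\bv)$.

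\textbf{Concluding via Cauchy--Schwarz.} If $\bn\cdot\bv \le 0$ the lower bound is trivial, so assume $\bn\cdot\bv > 0$. Since each coefficient $\bn\cdot\bw_i$ is non-negative by hypothesis \eqref{eq:TensorDecomposition}, we may bound $(\bw_i\cdot\bv) \le (\bw_i\cdot\bv)_+$ term by term without changing the sign of the weight, so
\begin{equation*}
\bn\cdot\bv \;\le\; \sum_{i=1}^k \rho_i (\bn\cdot\bw_i)(\bw_i\cdot\bv)_+ \;=\; \sum_{i=1}^k \bigl(\sqrt{\rho_i}(\bn\cdot\bw_i)\bigr)\bigl(\sqrt{\rho_i}(\bw_i\cdot\bv)_+\bigr).
\end{equation*}
By the discrete Cauchy--Schwarz inequality this is bounded above by $\sqrt{\sum_i \rho_i (\bn\cdot\bw_i)^2}\cdot\sqrt{\sum_i \rho_i (\bw_i\cdot\bv)_+^2}$. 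The first factor equals $1$: indeed, inserting $\bv=\bn$ into \eqref{eq:TensorDecomposition} and using $\|\bn\|=1$ gives $\sum_i \rho_i(\bn\cdot\bw_i)^2 = 1 + \ve^2\cdot 0 = 1$. Squaring the resulting inequality $(\bn\cdot\bv)_+ \le \sqrt{\sum_i \rho_i(\bw_i\cdot\bv)_+^2}$ yields the desired lower bound in \eqref{eq:PositiveProductFramed}.

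\textbf{Expected obstacle.} There is really no deep obstacle; the only subtlety is recognising that the non-negativity condition $\bn\cdot\bw_i\ge 0$ is exactly what makes the passage from $(\bw_i\cdot\bv)$ to its positive part $(\bw_i\cdot\bv)_+$ a valid upper bound inside the weighted sum, and that the reproducing identity $\sum_i\rho_i(\bn\cdot\bw_i)\bw_i = \bn$ (rather than some weaker averaged identity) is available thanks to the fact that $\ve^2(\Id - \bn\bn^{\trans})$ annihilates $\bn$. Once these two ingredients are identified, Cauchy--Schwarz closes the argument cleanly.
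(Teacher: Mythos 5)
Your lower bound is correct and in fact takes a genuinely different (and rather elegant) route from the paper. You polarize the quadratic identity to get $\sum_i\rho_i\bw_i\bw_i^{\trans}=\bn\bn^{\trans}+\ve^2(\Id-\bn\bn^{\trans})$, apply it to $\bn$ to obtain the reproducing identity $\sum_i\rho_i(\bn\cdot\bw_i)\bw_i=\bn$, use $\bn\cdot\bw_i\ge 0$ to pass from $\bw_i\cdot\bv$ to $(\bw_i\cdot\bv)_+$ inside the sum, and close with weighted Cauchy--Schwarz together with $\sum_i\rho_i(\bn\cdot\bw_i)^2=1$. The paper instead argues termwise: writing $\bw_i^\perp:=\bw_i-(\bn\cdot\bw_i)\bn$, it shows $(\bw_i\cdot\bv)_+^2\ge|\bw_i\cdot\bv|^2-|\bw_i^\perp\cdot\bv|^2$ for every $i$ when $\bn\cdot\bv\ge0$, and sums. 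Both are correct; yours is a global argument and perhaps cleaner.

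However, your upper bound has a genuine gap, and the claim that it is ``essentially trivial'' is where the error sits. The trivial estimate gives
\begin{equation*}
\sum_{i=1}^k\rho_i(\bw_i\cdot\bv)_+^2\leq\sum_{i=1}^k\rho_i|\bw_i\cdot\bv|^2=(\bn\cdot\bv)^2+\ve^2\|\bn\times\bv\|^2=(\bn\cdot\bv)_+^2+(\bn\cdot\bv)_-^2+\ve^2\|\bn\times\bv\|^2,
\end{equation*}
and this right-hand side is \emph{larger} than the target $(\bn\cdot\bv)_+^2+\ve^2\|\bn\times\bv\|^2$ precisely when $\bn\cdot\bv<0$, so you have proved a \emph{weaker} inequality, not a stronger one; you cannot ``relax'' $(\bn\cdot\bv)^2$ downward to $(\bn\cdot\bv)_+^2$ while keeping a $\leq$. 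The case $\bn\cdot\bv<0$ is exactly where the hypothesis $\bn\cdot\bw_i\ge0$ must enter the upper bound too: decompose $\bw_i=(\bn\cdot\bw_i)\bn+\bw_i^\perp$, note that when $\bn\cdot\bv\le0$ and $\bn\cdot\bw_i\ge0$ one has $\bw_i\cdot\bv\le\bw_i^\perp\cdot\bv$, hence $(\bw_i\cdot\bv)_+^2\le|\bw_i^\perp\cdot\bv|^2$, and then $\sum_i\rho_i|\bw_i^\perp\cdot\bv|^2=\ve^2\|\bn\times\bv\|^2$ by the same matrix identity you already derived. That is the observation the paper uses, and it is essential; without it the upper bound as stated does not follow.
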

\begin{proof}
We may assume that $\rho_i=1$, for all $1 \leq i \leq k$, up to replacing $\bw_i$ with $\sqrt{\rho_i} \bw_i$.
Denote by $\bw_i^\perp := \bw_i - \<\bw_i,\bn\>\bn$ the orthogonal projection of $\bw_i$ on the hyperplane orthogonal to $\bn$. Then by \eqref{eq:TensorDecomposition} 
\begin{align*}
  \sum_{1\leq i \leq k}  |\bn\cdot\bw_i|^2 &= 1, &
  \sum_{1 \leq i \leq k} \bw_i^\perp \otimes \bw_i^\perp &= \ve^2 (\Id-\bn \otimes \bn).
\end{align*}
The proof of \eqref{eq:PositiveProductFramed} is split into two parts, depending on the sign of $(\bn\cdot\bv)$.
  If $(\bn\cdot\bv) \leq 0$, then $(\bw_i\cdot\bv) \leq (\bw_i^\perp\cdot \bw)$ for all $1 \leq i \leq k$, thus as announced
  \begin{equation*}
    \begin{split}
    \sum_{1 \leq i \leq k} (\bw_i\cdot\bv)_+^2 
    \leq  \sum_{1 \leq i \leq k} (\bw_i^\perp\cdot\bv)_+^2 
    \leq  \sum_{1 \leq i \leq k} |\bw_i^\perp\cdot\bv|^2 \\ 
    = \ve^2\|\bn \wedge \bv\|^2.
    \end{split}
  \end{equation*}
  In contrary if $(\bn\cdot\bv) \geq 0$, then the RHS of \eqref{eq:PositiveProductFramed} is immediate, and in addition $(\bw_i\cdot\bv)^2_+ \geq |\bw_i\cdot\bv|^2-|\bw_i^\perp\cdot\bv|^2$ for any  $1 \leq i \leq k$. (Indeed, if $(\bw_i\cdot\bv)\geq 0$ then $(\bw_i\cdot\bv)^2_+ = |\bw_i\cdot\bv|^2 \geq  |\bw_i\cdot\bv|^2 -|\bw_i^\perp\cdot\bv|^2$, and in contrary if $(\bw_i\cdot\bv)\leq 0$ we get $(\bw_i\cdot\bv)^2_+ = 0 \geq |\bw_i\cdot\bv|^2-|\bw_i^\perp\cdot\bv|^2$.) Hence, we conclude 
  \begin{equation*}
  \hspace{-.5em}
    \sum_{1 \leq i \leq k} (\bw_i\cdot\bv)_+^2 \geq 
\sum_{1 \leq i \leq k} |\bw_i\cdot \bv|^2 - |\bw_i^\perp\cdot\bv|^2 
= |\bn\cdot\bv|^2.
\qedhere
  \end{equation*}
\end{proof}

\begin{table*}
\renewcommand{\arraystretch}{1.}
\section{Table of Notations \label{app:TableOfNotations}} 

\begin{tabular}{|l|p{7cm}|p{5.4cm}|}
\hline
\textbf{Symbol}
& \textbf{Explanation}
& \textbf{Reference}
\\ \hline \hline
\multicolumn{3}{c}{} \vspace{-3mm}
\\ \hline
\rule{0pt}{8pt} $\R^d$, $\bx$ & Position space with vectors $\bx = (x^1, \dots, x^d)^T$. & Sect. \ref{sec:introdistance}, Sect. \ref{subsec:introgeometry}, $\dots$ \\
\hline
\rule{0pt}{8pt} $\bbS^{d-1}$,$\bn$ & Angular space, $\bbS^{d-1} = \{ \bn \in \R^d \; | \; ||\bn|| = 1 \}$. & Sect. \ref{sec:introdistance}, Sect. \ref{subsec:introgeometry}, $\dots$\\
\hline
\rule{0pt}{8pt} $\ba$ & Reference axis. For $d = 2$, $\ba = (1,0)^T$, for $d = 3$, $\ba = (0,0,1)^T$. & Eq. \eqref{origin}, Remark \ref{rem:globalmingeodesic}\\
\hline
\rule{0pt}{8pt} $\bbM$, $\bp$ & Manifold $\bbM = \R^d \times \bbS^{d-1}$, with $\bp = (\bx,\bn) \in \bbM$ & Sect. \ref{sec:introdistance} $\dots$\\
\hline
\rule{0pt}{8pt} $T(\bbM)$, $T^*(\bbM)$, $T_\bp(\bbM)$ & Tangent bundle $T(\bbM) = \{ (\bp,\dot{\bp}) \; |\; \bp \in \bbM, \dot{\bp} \in T_{\bp}(\bbM) \}$, and cotangent bundle $T^*(\bbM)$, with tangent space $T_\bp(\bbM)$. & Sect. \ref{sec:introdistance}, Sect. \ref{subsec:eikonal}, Sect. \ref{subsec:introgeometry}, $\dots$\\
\hline
\rule{0pt}{8pt} $\Gamma$, $\bgamma$ & Space $\Gamma = \Lip([0,1],\bbM)$ of admissible curves, with $t \mapsto \bgamma(t) = (\bx(t),\bn(t))$. & Eq. \eqref{eqdef:dF}, $\dots$\\
\hline
\rule{0pt}{8pt} $\cF$, $\cF^*$, $\cF_0$, $\cF_0^+$, $\cF_\ve$, $\cF_\ve^+$, & & \\$(\cF_\ve)^*$, $(\cF_\ve^+)^*$ & \vspace{-5.5mm} Finsler metric $\cF$ defined on $\mathbb{M}$, its dual $\cF^*: T^*(\bbM) \rightarrow \R$ the models with and without reverse gear $\cF_0$, $\cF_0^+$, their approximations $\cF_\ve$, $\cF_\ve^+$ and their duals. & \vspace{-5.5mm} Sect. \ref{sec:introdistance}, Eqs. \eqref{eqdef:ReedSheppMetric}, \eqref{eqdef:ReedSheppForwardMetric}, \eqref{eqdef:EikonalPDE}, \eqref{eqdef:Metric}, \eqref{importantFs1}, \eqref{importantFs2}, Prop. \ref{prop:DualMetric}, $\dots$\\
\hline
\rule{0pt}{8pt} $d_\cF$, $U$ & Distance function $d_\cF(\bp,\bq)$ for $\bp,\bq \in \bbM$, and $U(\bp) = d_\cF(\bp_S,\bp)$ for a fixed source $\bp_S \in \bbM$& Eqs. \eqref{eqdef:dF}, \eqref{eqdef:U}, $\dots$ \\
\hline
\rule{0pt}{8pt} $\ve$ & Anisotropy parameter in the metric, $\ve = 0$ corresponds to the sub-Riemannian manifold case. & Eqs. \eqref{importantFs1}, \eqref{importantFs2}, Fig. \ref{fig:approximatemetric}, \dots\\
\hline
\rule{0pt}{8pt} $\propto$ & We write $\dot{\bx} \propto \bn$ when $\dot{\bx} = \lambda \bn$ for some $\lambda \in \R$ & Eqs. \eqref{eqdef:ReedSheppMetric}, \eqref{eqdef:ReedSheppForwardMetric}, Sect. \ref{subsec:ApproximateReedShepp}, Thm. \ref{th:Controllability}, \\
\hline
\rule{0pt}{8pt} $\cC_1$, $\cC_2$, $\xi$ & External cost $\cC_i: \bbM \rightarrow \bR^+$, analytic and strictly bounded from below, and $\xi > 0$ to balance the cost of spatial motion relative to angular motion, when we choose $\cC_1 = \xi \cC_2$ & Sect. \ref{subsec:introgeometry}, $\dots$
\\
\hline
\rule{0pt}{8pt} $\gB$, $\cB_\cF$ & Set of controls $\gB$, and the set of admissible controls $\cB_\cF(\bp) = \{\dot{\bp} \in T_\bp(\bbM) | \cF(\bp,\dot{\bp}) \leq 1) \}$ & Fig. \ref{fig:freevspositive_introfig_full}, Eq. \eqref{controlset}, \eqref{viewpoint}, Appendix \ref{app:WellPosedness}\\
\hline
\rule{0pt}{8pt} $\ba$ & Reference axis. For $d = 2$, $\ba = (1,0)^T$, for $d = 3$, $\ba = (0,0,1)^T$. & Eq. \eqref{origin}, Remark \ref{rem:globalmingeodesic}\\
\hline
\rule{0pt}{8pt} $(\cdot)_-$, $(\cdot)_+$ & $(\cdot)_- = \min (\cdot, 0)$, $(\cdot)_+ = \max (\cdot, 0)$ & Eqs. \eqref{eq:wedgeminmax}, \dots\\
\hline
\rule{0pt}{8pt} $\gothic{R}$, $\bar{\gothic{R}}$, $\bar{\gothic{R}}^c$ & Subset $\gothic{R} \in \bbM$ of end-points that are reached by cuspless geodesics, the closure $\bar{\gothic{R}}$ and its complement $\bar{\gothic{R}}^c$ & Def. \ref{def:cusplessset}, Remark \ref{rem:globalmingeodesic}, Thm. \ref{th:CuspsAndRotations}, Sect. \ref{ch:proofcuspskeypoints}.\\
\hline
\rule{0pt}{8pt} $\mathcal{A}_i$, $\omega^i$ & Left-invariant frame $\mathcal{A}_i$ and the dual frame $\omega^i$. & Sect. \ref{ch:proofcuspskeypoints}, Eqs. \eqref{localframe}, \eqref{eq:dualframe}, Remark \ref{rem:dualnormmovingframe}.\\
\hline
\rule{0pt}{8pt} $u^i$, $\hat{p}_i$, $\tilde{u}$ & Controls (velocity components) $u^i$, momentum components $\hat{p}_i$ and the special spatial $\tilde{u}$ & Def. \ref{def:cusp}, \eqref{eq:speedmomentuminframe}, \dots \\
\hline
\rule{0pt}{10pt} $\mathcal{G}_{\bp,\ve}$, $\tilde{\mathcal{G}}_{\bp,\ve}$ & Metric tensors  $\mathcal{G}_{\bp,\ve},  \tilde{\mathcal{G}}_{\bp,\ve}: T_\bp(\bbM) \times T_\bp(\bbM) \rightarrow \R^+$ & Eq. \eqref{importantmetric}, \eqref{Gtilde} \\
\hline
\rule{0pt}{10pt} $\nabla$, $\mathcal{G}_{\bp,\ve}^{-1}{\rm d}$, $\tilde{\mathcal{G}}^{-1}_{\bp,\ve}{\rm d}$ & Standard gradient $\nabla = (\nabla_{\bR^d}, \nabla_{\bbS^{d-1}})$, the intrinsic gradient $\mathcal{G}_{\bp, \ve}^{-1}{\rm d}$ of the manifold $(\mathbb{M}_+,d_{\cF_{\ve}})$ and $\tilde{\mathcal{G}}_{\bp, \ve}^{-1}{\rm d}$ the intrinsic gradient of $(\mathbb{M}_-,d_{\cF_{\ve}^+})$ & Cor. \ref{cor:eik}, Thm. \ref{th:Backtracing}, Remark \ref{simple2}, \\
\hline
\rule{0pt}{8pt} $F_{M,\bw}$, $F^*_{\hat{M},\hat{\bw}}$ & Norm $F_{M,\bw} : \R^n \rightarrow \R^+$ and dual norm $F^*_{\hat{M},\hat{\bw}} : (\R^n)^* \rightarrow \R^+$  & Lemma \ref{le:normanddualnorm} \\
\hline
\rule{0pt}{8pt} $X$, $\mathbb{X}$  & Discrete subset $X$ of $\R^d$, and image support $\mathbb{X} \subset \bM$. & Sect. \ref{sec:Implementation}, Appendix \ref{app:WellPosedness}\\
\hline
\rule{0pt}{8pt} $D_\bn^\ve$ & Symmetric positive definite matrix $D_\bn^\ve = \bn \otimes \bn + \ve^2 (\Id-\bn\otimes\bn)$ & Eq. \eqref{Dnve}, \eqref{btsimple}, \dots \\
\hline
\rule{0pt}{8pt} $\bbM_+$, $\bbM_-$, $\partial \bbM_\pm$ & $\mathbb{M}_{+}=\{\bp \in \mathbb{M} \;|\;  \langle {\rm d}U^+(\bp), \ul{n} \rangle > 0 \}$, $\mathbb{M}_{-}=\{\bp \in \mathbb{M} \;|\;  \langle {\rm d}U^+(\bp), \ul{n} \rangle < 0 \}$ and their boundary & Cor. \ref{cor:eik}, Thm. \ref{th:Backtracing}\\
\hline
\rule{0pt}{8pt} $N_x, N_y, N_z, N_o$ & Resolution in spatial/angular coordinates  & Sect. \ref{sec:Applications}\\
\hline
\rule{0pt}{8pt} $\sigma, p$ & Parameters $\sigma > 0$, $p \in \mathbb{N}$ of the cost function $\cC$  & Sect. \ref{sec:Applications}\\
\hline
\end{tabular}\caption{Symbols used throughout the paper, their brief explanation and references to where they appear, or where they are defined/first appear. The dots in the Reference column indicate that they are used frequently.}
\end{table*}
\todo{E.2: spacing table}

\bibliographystyle{plain}

\end{document}